\newcommand*{\rom}[1]{\expandafter\@slowromancap\romannumeral #1@}
\def \EE {\mathbb{E}}
\def \RR {\mathbb{R}}
\def \xb {\mathbf{x}}
\def \yb {\mathbf{y}}
\def \ab {\mathbf{a}}
\def \hb {\mathbf{h}}
\def \bbb {\mathbf{b}}
\def \gb {\mathbf{g}}
\def \vb {\mathbf{v}}
\def \Vb {\mathbf{V}}
\def \Ib {\mathbf{I}}
\newcommand{\la}{\langle}
\newcommand{\ra}{\rangle}
\def \dF {\nabla F}
\def \HF {\nabla^2 F}
\def \df {\nabla f}
\def \Hf {\nabla^2 f}
\def \nameheavy {\text{SRVRC}}
\def \namefree {\nameheavy_\text{free}}
\def \namesub {\text{Cubic-Subsolver}}
\def \namefinal {\text{Cubic-Finalsolver}}
\def \mineig {\lambda_{\text{min}}}
\def \mod {\text{mod}}
\def \upp {M}
\begin{document}

\title{{\huge Stochastic Recursive Variance-Reduced Cubic Regularization Methods}}
\author
{
	Dongruo Zhou\thanks{Department of Computer Science, University of California, Los Angeles, CA 90095, USA; e-mail: {\tt drzhou@cs.ucla.edu}} 
	~~~and~~~
	Quanquan Gu\thanks{Department of Computer Science, University of California, Los Angeles, CA 90095, USA; e-mail: {\tt qgu@cs.ucla.edu}}
}
\date{}
\maketitle

\begin{abstract}
Stochastic Variance-Reduced Cubic regularization (SVRC) algorithms have received increasing attention due to its improved gradient/Hessian complexities (i.e., number of queries to stochastic gradient/Hessian oracles)  
to find local minima 
for nonconvex finite-sum optimization.
However, it is unclear whether existing SVRC algorithms can be further improved. Moreover, the semi-stochastic Hessian estimator adopted in existing SVRC algorithms prevents the use of Hessian-vector product-based fast cubic subproblem solvers, which makes SVRC algorithms computationally intractable for high-dimensional problems. 
In this paper, we first present a Stochastic Recursive Variance-Reduced Cubic regularization method (SRVRC) using a recursively updated semi-stochastic gradient and Hessian estimators. It enjoys improved gradient and Hessian complexities to find an $(\epsilon, \sqrt{\epsilon})$-approximate local minimum, and outperforms the state-of-the-art SVRC algorithms. 
Built upon SRVRC, we further propose a Hessian-free SRVRC algorithm, namely SRVRC$_{\text{free}}$, which only needs $\tilde O(n\epsilon^{-2} \land \epsilon^{-3})$ stochastic gradient and Hessian-vector product computations, where $n$ is the number of component functions in the finite-sum objective and $\epsilon$ is the optimization precision. This outperforms the best-known result $\tilde O(\epsilon^{-3.5})$ achieved by stochastic cubic regularization algorithm proposed in \cite{tripuraneni2018stochastic}. 
\end{abstract}


\section{Introduction}

Many machine learning problems can be formulated as empirical risk minimization, which is in the form of finite-sum optimization as follows:
\begin{align}
    \textstyle{\min_{\xb \in \RR^d}} F(\xb) := \frac{1}{n}\textstyle{\sum_{i=1}^n} f_i(\xb), \label{def:problem}
\end{align}
where each $f_i:\RR^d\rightarrow \RR$ can be a convex or nonconvex function. In this paper, we are particularly interested in nonconvex finite-sum optimization, where each $f_i$ is nonconvex.  This is often the case for deep learning \citep{lecun2015deep}.  In principle, it is hard to find the global minimum of \eqref{def:problem} because of the NP-hardness of the problem \citep{hillar2013most}, thus it is reasonable to resort to finding local minima (a.k.a., second-order stationary points). It has been shown that local minima can be the global minima in certain machine learning problems, such as low-rank matrix factorization \citep{ge2016matrix,bhojanapalli2016global,zhang2018primal} and training deep linear neural networks \citep{kawaguchi2016deep,hardt2016identity}. Therefore, developing algorithms to find local minima is important both in theory and in practice. More specifically, we define an $(\epsilon_g, \epsilon_H)$-approximate local minimum $\xb$ of $F(\xb)$ as follows
\begin{align}
    \|\dF(\xb)\|_2\leq \epsilon_g, \quad\mineig(\HF(\xb)) \geq -\epsilon_H,\label{eq:def_localminima}
\end{align}
where $\epsilon_g,\epsilon_H>0$ are predefined precision parameters. The most classic algorithm to find the approximate local minimum is cubic-regularized (CR) Newton method, which was originally proposed in the seminal paper by \citet{Nesterov2006Cubic}. Generally speaking, in the $k$-th iteration, cubic regularization method solves a subproblem, which minimizes a cubic-regularized second-order Taylor expansion at the current iterate $\xb_k$. The update rule can be written as follows:
\begin{align}
    &\hb_k = \argmin_{\hb \in \RR^d} \la \dF(\xb_k), \hb\ra   +1/2\la \HF(\xb_k)\hb, \hb\ra +M/6\|\hb\|_2^3,\label{def:total_sub_problem}\\
    &\xb_{k+1} = \xb_k+\hb_k, \label{def:total_sub_problem2}
\end{align}
where $M>0$ is a penalty parameter. \citet{Nesterov2006Cubic} proved that to find an $(\epsilon, \sqrt{\epsilon})$-approximate local minimum of a nonconvex function $F$, cubic regularization requires at most $O(\epsilon^{-3/2})$ iterations. However, when applying cubic regularization to nonconvex finite-sum optimization in \eqref{def:problem}, a major bottleneck of cubic regularization is that it needs to compute $n$ individual gradients $\nabla f_i(\xb_k)$ and Hessian matrices $\Hf_i(\xb_k)$ at each iteration, which leads to a total $O(n\epsilon^{-3/2})$ gradient complexity (i.e., number of queries to the stochastic gradient oracle $\df_i(\xb)$ for some $i$ and $\xb$) and $O(n\epsilon^{-3/2})$ Hessian  complexity (i.e., number of queries to the stochastic Hessian oracle $\nabla^2 f_i(\xb)$ for some $i$ and $\xb$). Such computational overhead will be extremely expensive when $n$ is large as is in many large-scale machine learning applications.

To 
overcome the aforementioned computational burden of cubic regularization, \citet{kohler2017sub, xu2017newton} used subsampled gradient and subsampled Hessian, which achieve $\tilde O(n\epsilon^{-3/2} \land \epsilon^{-7/2})$ gradient complexity and $\tilde O(n\epsilon^{-3/2} \land \epsilon^{-5/2})$ Hessian complexity. \citet{zhou2018stochastic} proposed a stochastic variance reduced cubic regularization method (SVRC), which uses novel semi-stochastic gradient and semi-stochastic Hessian estimators inspired by variance reduction for first-order finite-sum optimization \citep{johnson2013accelerating, Reddi2016Stochastic, allen2016variance}, which attains $O(n^{4/5}\epsilon^{-3/2})$ Second-order Oracle (SO) complexity\footnote{Second-order Oracle (SO) returns triple $[f_i(\xb), \nabla f_i(\xb), \nabla^2 f_i(\xb)]$ for some $i$ and $\xb$, hence the SO complexity can be seen as the maximum of gradient and Hessian complexities.}.  \citet{zhou2018sample, wang2018sample,zhang2018adaptive} used a simpler semi-stochastic gradient compared with \citep{zhou2018stochastic}, and semi-stochastic Hessian, which 
achieves a better Hessian complexity, i.e., $O(n^{2/3}\epsilon^{-3/2})$. 
However, it is unclear whether the gradient and Hessian complexities of the aforementioned SVRC algorithms can be further improved. Furthermore, all these algorithms need to use the semi-stochastic Hessian estimator, which is not compatible with Hessian-vector product-based cubic subproblem solvers \citep{Agarwal2017Finding, Carmon2016Gradient, NIPS2018_8269}. Therefore, the cubic subproblem \eqref{def:total_sub_problem2} in each iteration of existing SVRC algorithms has to be solved by computing the inverse of the Hessian matrix, whose computational complexity is at least $O(d^w)$\footnote{$w$ is the matrix multiplication constant, where $w = 2.37...$ \citep{Golub:1996:MC:248979}.
}. This makes existing SVRC algorithms not very practical for high-dimensional problems.

In this paper, we first show that the gradient and Hessian complexities of SVRC-type algorithms can be further improved. The core idea is to use novel recursively updated semi-stochastic gradient and Hessian estimators, which are inspired by the stochastic path-integrated differential
estimator (SPIDER) \citep{fang2018spider} and the StochAstic Recursive grAdient algoritHm (SARAH) \citep{nguyen2017sarah} for first-order optimization. We show that such kind of estimators can be extended to second-order optimization to reduce the Hessian complexity. Nevertheless, our analysis is very different from that in \cite{fang2018spider,nguyen2017sarah}, because we study a fundamentally different optimization problem (i.e., finding local minima against finding first-order stationary points) and a completely different optimization algorithm (i.e., cubic regularization versus gradient method). In addition, in order to 
reduce the runtime complexity of existing SVRC algorithms, we further propose a \emph{Hessian-free} SVRC method that can not only use the novel semi-stochastic gradient estimator, but also leverage the Hessian-vector product-based fast cubic subproblem solvers. 
Experiments on benchmark nonconvex finite-sum optimization problems illustrate the superiority of our newly proposed SVRC algorithms over the state-of-the-art (Due to space limit, we include the experiments in Appendix \ref{sec:experiment}). 

In detail, our contributions are summarized as follows:
\begin{enumerate}[leftmargin = *]
 \item We propose a new SVRC algorithm, namely $\nameheavy$, which can find an $(\epsilon, \sqrt{\epsilon})$-approximate local minimum with $\tilde O(n\epsilon^{-3/2} \land  \epsilon^{-3})$ gradient complexity and $\tilde O(n \land \epsilon^{-1} + n^{1/2}\epsilon^{-3/2} \land \epsilon^{-2})$ Hessian complexity. Compared with previous work in cubic regularization, the gradient and Hessian complexity of $\nameheavy$ is strictly better than the algorithms in \cite{zhou2018sample, wang2018sample, zhang2018adaptive}, and better than that in \cite{zhou2018stochastic,shen2019stochastic} in a wide regime.
    
\item We further propose a new algorithm $\namefree$, which requires $\tilde O(\epsilon^{-3} \land n\epsilon^{-2})$ stochastic gradient and Hessian-vector product computations to find an $(\epsilon, \sqrt{\epsilon})$-approximate local minimum. 
$\namefree$ is strictly better than the algorithms in \citep{Agarwal2017Finding, Carmon2016Gradient, tripuraneni2018stochastic} when $n \gg 1$. The runtime complexity of $\namefree$ is also better than that of $\nameheavy$ when the problem dimension $d$ is large.
\end{enumerate}

In an independent and concurrent work \citep{shen2019stochastic}, two stochastic trust region methods namely STR1 and STR2 were proposed, which are based on the same idea of variance reduction using SPIDER, and are related to our first algorithm $\nameheavy$. 
Our $\nameheavy$ is better than STR1 because it enjoys the same Hessian complexity but a better gradient complexity than STR1. Compared with STR2, our $\nameheavy$ has a consistently lower Hessian complexity and lower gradient complexity in a wide regime (i.e., $ \epsilon \gg n^{-1/2}$). Since Hessian complexity is the dominating term in cubic regularization method \citep{zhou2018sample, wang2018sample}, our $\nameheavy$ is arguably better than STR2, as verified by our experiments. 


For the ease of comparison, 
we summarize the comparison of methods which need to compute the Hessian explicitly in Table \ref{table:11}, the Hessian-free or Hessian-vector product based methods in Table \ref{table:22}.


\begin{table}[ht]
\caption{Comparisons of different methods to find an $(\epsilon, \sqrt{\rho\epsilon})$-local minimum on gradient and Hessian complexity.}\label{table:11}
\begin{center}
\begin{tabular}{ccc}
\toprule
Algorithm & Gradient& Hessian \\
\midrule
CR  & \multirow{2}{*}{$O\big(\frac{n}{\epsilon^{3/2}}\big)$} & \multirow{2}{*}{$O\big(\frac{n}{\epsilon^{3/2}}\big)$} \\
\small{\citep{Nesterov2006Cubic}} & & \\
SCR & \multirow{2}{*}{$\tilde O\big(\frac{n}{\epsilon^{3/2}}\land \frac{1}{\epsilon^{7/2}}\big)$}  & \multirow{2}{*}{$\tilde O\big(\frac{n}{\epsilon^{3/2}}\land \frac{1}{\epsilon^{5/2}}\big)$}  \\
\small{\citep{kohler2017sub,xu2017newton}} & & \\
SVRC & \multirow{2}{*}{$\tilde O\big( \frac{n^{4/5}}{\epsilon^{3/2}}\big)$} & \multirow{2}{*}{$\tilde O\big( \frac{n^{4/5}}{\epsilon^{3/2}}\big)$} \\
\small{\citep{zhou2018stochastic}} & & \\
(Lite-)SVRC & \multirow{2}{*}{$\tilde O\big(\frac{n}{\epsilon^{3/2}}\big)$} & \multirow{2}{*}{$\tilde O\big( \frac{n^{2/3}}{\epsilon^{3/2}}\big)$} \\
\small{\citep{zhou2018sample,wang2018sample, zhou2019stochastic}} & & \\
SVRC & \multirow{2}{*}{$O\big(\frac{n}{\epsilon^{3/2}}\land \frac{n^{2/3}}{\epsilon^{5/2}}\big)$} & \multirow{2}{*}{$O\big( \frac{n^{2/3}}{\epsilon^{3/2}}\big)$} \\
\small{\citep{zhang2018adaptive}} & & \\
STR1 & \multirow{2}{*}{$\tilde O\big(\frac{n}{\epsilon^{3/2}}\land \frac{n^{1/2}}{\epsilon^{2}}\big)$} & \multirow{2}{*}{$\tilde O\big( \frac{n^{1/2}}{\epsilon^{3/2}}\land \frac{1}{\epsilon^2}\big)$} \\
\small{\citep{shen2019stochastic}} & & \\
STR2 & \multirow{2}{*}{$\tilde O\big(\frac{n^{3/4}}{\epsilon^{3/2}}\big)$} & \multirow{2}{*}{$\tilde O\big( \frac{n^{3/4}}{\epsilon^{3/2}}\big)$} \\
\small{\citep{shen2019stochastic}} & & \\
$\nameheavy$ & \multirow{2}{*}{$\tilde O\big(\frac{n}{\epsilon^{3/2}}\land \frac{n^{1/2}}{\epsilon^2} \land \frac{1}{\epsilon^3}\big)$} & \multirow{2}{*}{$\tilde O\big( \frac{n^{1/2}}{\epsilon^{3/2}}\land \frac{1}{\epsilon^2}\big)$} \\
\small{(This work)} & & \\
\bottomrule
\end{tabular}
\end{center}
\end{table}

\begin{table}[ht]
\caption{Comparisons of different methods to find an $(\epsilon, \sqrt{\rho\epsilon})$-local minimum both on stochastic gradient and Hessian-vector product computations.}\label{table:22}
\begin{center}
\begin{tabular}{cc}
\toprule
Algorithm & Gradient $\&$ Hessian-vector product \\
\midrule
SGD  & \multirow{2}{*}{$\tilde O\big(\frac{1}{\epsilon^{7/2}}\big)$}  \\
\small{\citep{fang2019sharp}} &  \\
SGD  & \multirow{2}{*}{$\tilde O\big(\frac{1}{\epsilon^{4}}\big)$}  \\
\small{\citep{jin2019stochastic}} &  \\
Fast-Cubic  & \multirow{2}{*}{$ \tilde O\big(\frac{n}{\epsilon^{3/2}} + \frac{n^{3/4}}{\epsilon^{7/4}}\big)$}  \\
\small{\citep{Agarwal2017Finding}} &  \\
GradientCubic  & \multirow{2}{*}{$ \tilde O\big(\frac{n}{\epsilon^2}\big)$}  \\
\small{\citep{Carmon2016Gradient}} &  \\
STC  & \multirow{2}{*}{$\tilde O\big(\frac{1}{\epsilon^{7/2}}\big)$}  \\
\small{\citep{tripuraneni2018stochastic}} &  \\
SPIDER  & \multirow{2}{*}{$\tilde O\big((\frac{\sqrt{n}}{\epsilon^2} + \frac{1}{\epsilon^{2.5}}) \land \frac{1}{\epsilon^3}\big)$}  \\
\small{\citep{fang2018spider}} &  \\
$\namefree$  & \multirow{2}{*}{$\tilde O\big(\frac{n}{\epsilon^{2}}\land \frac{1}{\epsilon^3}\big)$}  \\
\small{(This work)} &  \\
\bottomrule
\end{tabular}
\end{center}
\end{table}

\footnotetext[3]{The complexity for Natasha2 to find an $(\epsilon, \epsilon^{1/4})$-local minimum only requires $\tilde O(\epsilon^{-3.25})$. Here we adapt the complexity result for finding an $(\epsilon, \epsilon^{1/2})$-approximate local minimum.}

\section{Additional Related Work}
In this section, we review additional related work that is not discussed in the introduction section.

\noindent\textbf{Cubic Regularization and Trust-Region Methods}
Since cubic regularization was first proposed by \citet{Nesterov2006Cubic}, there has been a line of followup research. It was extended to adaptive regularized cubic methods (ARC) by \citet{Cartis2011Adaptive, Cartis2011Adaptive2}, which enjoy the same iteration complexity as standard cubic regularization while having better empirical performance. The first attempt to make cubic regularization a Hessian-free method was done by \citet{Carmon2016Gradient}, which solves the cubic sub-problem by gradient descent, requiring in total $\tilde O(n\epsilon^{-2})$ stochastic gradient and Hessian-vector product computations. \citet{Agarwal2017Finding} solved cubic sub-problem by fast matrix inversion based on accelerated gradient descent, which requires $\tilde O(n\epsilon^{-3/2}+ n^{3/4}\epsilon^{-7/4})$ stochastic gradient and Hessian-vector product computations. In the pure stochastic optimization setting, \citet{tripuraneni2018stochastic} proposed stochastic cubic regularization method, which uses subsampled gradient and Hessian-vector product-based cubic subproblem solver, and requires $\tilde O(\epsilon^{-3.5})$ stochastic gradient and Hessian-vector product computations. A closely related second-order method to cubic regularization methods are trust-region methods \citep{conn2000trust,cartis2009trust,cartis2012complexity, cartis2013evaluation}. Recent studies \citep{blanchet2016convergence,curtis2017trust, martinez2017cubic} proved that the trust-region method can achieve the same iteration complexity as the cubic regularization method. \citet{xu2017newton} also extended trust-region method to subsampled trust-region method for nonconvex finite-sum optimization. 

\noindent\textbf{Local Minima Finding}
Besides cubic regularization and trust-region type methods, there is another line of research for finding approximate local minima, which is based on first-order optimization. \citet{ge2015escaping, jin2017escape} proved that (stochastic) gradient methods with additive noise are able to escape from nondegenerate saddle points and find approximate local minima.  \citet{Carmon2016Accelerated, royer2017complexity, allen2017natasha,xu2018first,allen2018neon2,jin2017accelerated,yu2017saving,yu2017third,zhou2018finding,fang2018spider} showed that by alternating first-order optimization and Hessian-vector product based negative curvature descent, one can find approximate local minima even more efficiently. Very recently, \citet{fang2019sharp,jin2019stochastic} showed that stochastic gradient descent itself can escape from saddle points.

\noindent\textbf{Variance Reduction}
Variance reduction techniques play an important role in our proposed algorithms. Variance reduction techniques were first proposed for convex finite-sum optimization, which use semi-stochastic gradient to reduce the variance of the stochastic gradient and improve the gradient complexity. 
Representative algorithms include Stochastic Average Gradient (SAG) \citep{roux2012stochastic}, Stochastic Variance Reduced Gradient (SVRG) \citep{johnson2013accelerating,xiao2014proximal}, SAGA \citep{defazio2014saga} and SARAH \citep{nguyen2017sarah}, to mention a few. For nonconvex finite-sum optimization problems, 
\citet{garber2015fast,shalev2016sdca} studied the case where each individual function is nonconvex, but their sum is still (strongly) convex. \citet{Reddi2016Stochastic,allen2016variance} extended SVRG to noncovnex finite-sum optimization, which is able to converge to first-order stationary point with better gradient complexity than vanilla gradient descent.  \citet{fang2018spider,zhou2018stochastic_nips, wang2018spiderboost, nguyen2019optimal} further improved the gradient complexity for nonconvex finite-sum optimization to be (near) optimal. 

\section{Notation and Preliminaries}
In this work, all index subsets are multiset. We use $\df_\cI(\xb)$ to represent $1/|\cI|\cdot \sum_{i \in \cI} \df_i (\xb)$ if $|\cI| < n$ and $\dF(\xb)$ otherwise. We use $\Hf_\cI(\xb)$ to represent $1/|\cI|\cdot \sum_{i \in \cI} \Hf_i (\xb)$ if $|\cI| < n$ and $\HF(\xb)$ otherwise. For a vector $\vb$, we denote its $i$-th coordinate by $v_i$. We denote vector Euclidean norm by $\| \vb\|_2$. For any matrix $\Ab$, we denote its $(i,j)$ entry by $A_{i,j}$, its Frobenius norm by $\|\Ab\|_F$ , and its spectral norm by $\|\Hb\|_2$. For a symmetric matrix $\Hb\in\RR^{d\times d}$, we denote its minimum eigenvalue by $\mineig(\Hb)$. For symmetric matrices $\Ab, \Bb \in \RR^{d \times d}$, we say $\Ab \succeq \Bb$ if $\mineig(\Ab - \Bb) \geq 0$. We use $f_n=O(g_n)$ to denote that $f_n\le C g_n$ for some constant $C>0$ and use $f_n=\tilde O(g_n)$ to hide the logarithmic factors of $g_n$. We use $a \land b = \min\{a,b\}$. 

We begin with a few assumptions that are needed for later theoretical analyses of our algorithms.

The following assumption says that the gap between the function value at the initial point $\xb_0$ and the minimal function value is bounded.
\begin{assumption}\label{assumption_1}
For any function $F(\xb)$ and an initial point $\xb_0$, there exists a constant $0<\Delta_F<\infty$ such that $F(\xb_0) - \inf_{\xb \in \RR^d} F(\xb)\leq\Delta_F$.
\end{assumption}

We also need the following $L$-gradient Lipschitz and $\rho$-Hessian Lipschitz assumption.  

\begin{assumption}\label{assumption_2}
For each $i$, we assume that $f_i$ is $L$-gradient Lipschitz continuous and $\rho$-Hessian Lipschitz continuous, where we have $\|\df_i(\xb) - \df_i(\yb)\|_2 \leq L\|\xb - \yb\|_2$ and $\|\Hf_i(\xb) - \Hf_i(\yb)\|_2 \leq \rho \|\xb - \yb\|_2$ for all $\xb, \yb \in \RR^d$.
\end{assumption}

Note that $L$-gradient Lipschitz is not required in the original cubic regularization algorithm \citep{Nesterov2006Cubic} and the SVRC algorithm \citep{zhou2018stochastic}. However, for most other SVRC algorithms \citep{zhou2018sample, wang2018sample, zhang2018adaptive}, they need the $L$-gradient Lipschitz assumption. 

In addition, we need the difference between the stochastic gradient and the full gradient to be bounded.

\begin{assumption}\label{assumption_3}
We assume that $F$ has $\upp$-bounded stochastic gradient, where we have $\|\nabla f_i(\xb) - \nabla F(\xb)\|_2 \leq \upp, \forall \xb \in \RR^d, \forall i \in [n]$.
\end{assumption}
It is worth noting that Assumption \ref{assumption_3} is weaker than the assumption that each $f_i$ is Lipschitz continuous, which has been made in \cite{kohler2017sub, zhou2018sample,wang2018sample,zhang2018adaptive}. We would also like to point out that we can make additional assumptions on the variances of the stochastic gradient and Hessian, such as the ones made in \cite{tripuraneni2018stochastic}. Nevertheless, making these additional assumptions 
does not improve the dependency of the gradient and Hessian complexities or the stochastic gradient and Hessian-vector product computations on $\epsilon$ and $n$. Therefore we chose not making these additional assumptions on the variances.

\begin{algorithm*}[t]
\caption{Stochastic Recursive Variance-Reduced Cubic Regularization ($\nameheavy$)}\label{algorithm:1}
\begin{algorithmic}[1]
  \STATE \textbf{Input:} Total iterations $T$, batch sizes $\{B_t^{(g)}\}_{t=1}^T, \{B_t^{(h)}\}_{t=1}^T$, cubic penalty parameter $\{M_{t}\}_{t=1}^T$, inner gradient length $S^{(g)}$, inner Hessian length $S^{(h)}$, initial point $\xb_0$, accuracy $\epsilon$ and Hessian Lipschitz constant $\rho$.
  \FOR{$t=0,\ldots,T-1$}
  \STATE Sample index set $\cJ_t$ with $|\cJ_t| = B_t^{(g)};$ $\cI_t$ with $|\cI_t| = B_t^{(h)};$
  \begin{align}
      \vb_t &\leftarrow  \begin{cases}
               \nabla f_{\cJ_t}(\xb_t), &\mod (t,S^{(g)}) = 0\\
               \nabla f_{\cJ_t}(\xb_t) - \nabla f_{\cJ_t}(\xb_{t-1}) +\vb_{t-1}, &\text{else}
            \end{cases}\label{def_vv}\\
        \Ub_t &\leftarrow\begin{cases}
               \nabla^2 f_{\cI_t}(\xb_t), &\mod (t,S^{(h)}) = 0\\
               \nabla^2 f_{\cI_t}(\xb_t) - \nabla^2 f_{\cI_t}(\xb_{t-1}) +\Ub_{t-1}, &\text{else}
            \end{cases}\label{def_uu}\\
            \hb_t &\leftarrow \argmin_{\hb\in \RR^d} m_t(\hb):= \la \vb_t,\hb\ra + \frac{1}{2}\la\Ub_t \hb,\hb\ra + \frac{M_{t}}{6} \|\hb\|_2^3\label{stochas_subproblem}
  \end{align}
  \STATE $\xb_{t+1} \leftarrow \xb_{t} + \hb_t$
  \IF {$\|\hb_t\|_2 \leq \sqrt{\epsilon/\rho}$}
  \STATE  \textbf{return} $\xb_{t+1}$
  \ENDIF

  \ENDFOR
\end{algorithmic}

\end{algorithm*}

\section{The Proposed $\nameheavy$ Algorithm}\label{sec:SVRC}



In this section, we present $\nameheavy$, a novel algorithm which utilizes new semi-stochastic gradient and Hessian estimators compared with previous SVRC algorithms. We also provide a convergence analysis of the proposed algorithm. 

\subsection{Algorithm Description}

In order to reduce the computational complexity for calculating full gradient and full Hessian in \eqref{def:total_sub_problem}, several ideas such as subsampled/stochastic gradient and Hessian \citep{kohler2017sub, xu2017newton, tripuraneni2018stochastic} and variance-reduced semi-stochastic gradient and Hessian \citep{zhou2018stochastic, wang2018sample, zhang2018adaptive} have been used in previous work. $\nameheavy$ follows this line of work. The key idea is to use a new construction of semi-stochastic gradient and Hessian estimators, which are recursively updated in each iteration, and reset periodically after certain number of iterations (i.e., an epoch). This is inspired by the first-order variance reduction algorithms SPIDER \citep{fang2018spider} and SARAH \citep{nguyen2017sarah}. 
$\nameheavy$ constructs semi-stochastic gradient and Hessian as in \eqref{def_vv} and \eqref{def_uu} respectively.
To be more specific, in the $t$-th iteration when $\mod(t,S^{(g)}) = 0$ or $\mod(t,S^{(h)}) = 0$, where $S^{(g)}, S^{(h)}$ are the epoch lengths of gradient and Hessian, $\nameheavy$ will set the semi-stochastic gradient $\vb_t$ and Hessian $\Ub_t$ to be a subsampled gradient $\df_{\cJ_t}(\xb_t)$ and Hessian $\Hf_{\cJ_t}(\xb_t)$ at point $\xb_t$, respectively. 
In the $t$-th iteration when $\mod(t,S) \neq 0$ or $\mod(t,S^{(h)}) \neq 0$, $\nameheavy$ constructs semi-stochastic gradient and Hessian $\vb_t$ and $\Ub_t$ based on previous estimators $\vb_{t-1}$ and $\Ub_{t-1}$ recursively. 
With semi-stochastic gradient $\vb_t$, semi-stochastic Hessian $\Ub_t$ and $t$-th Cubic penalty parameter $M_t$, $\nameheavy$ constructs the $t$-th Cubic subproblem $m_t$ and solves for the solution to $m_t$ as $t$-th update direction as \eqref{stochas_subproblem}.
If $\|\hb_t\|_2$ is less than a given threshold which we set it as $\sqrt{\epsilon/\rho}$, $\nameheavy$ returns $\xb_{t+1} = \xb_t + \hb_t$ as its output. Otherwise, $\nameheavy$ updates $\xb_{t+1} = \xb_t + \hb_t$ and continues the loop.

The main difference between $\nameheavy$ and previous stochastic cubic regularization algorithms \citep{kohler2017sub, xu2017newton, zhou2018stochastic, zhou2018sample, wang2018sample, zhang2018adaptive} is that $\nameheavy$ adapts new semi-stochastic gradient and semi-stochastic Hessian estimators, which are defined recursively and have smaller asymptotic variance. The use of such semi-stochastic gradient has been proved to help reduce the gradient complexity in first-order nonconvex finite-sum optimization for finding stationary points \citep{fang2018spider,wang2018spiderboost,nguyen2019optimal}. Our work takes one step further to apply it to Hessian, and we will later show that it helps reduce the gradient and Hessian complexities in second-order nonconvex finite-sum optimization for finding local minima. 

\subsection{Convergence Analysis}\label{sec:rate_for_heavy}
In this subsection, we present our theoretical results about $\nameheavy$. 
While the idea of using variance reduction technique for cubic regularization is hardly new,  the new semi-stochastic gradient and Hessian estimators in \eqref{def_vv} and \eqref{def_uu} bring new technical challenges in the convergence analysis.

To describe whether a point $\xb$ is a local minimum, we follow the original cubic regularization work \citep{Nesterov2006Cubic} to use the following criterion $\mu(\xb)$:
\begin{definition}\label{main0_mu}
For any $\xb$, define $\mu(\xb)$ as $\mu(\xb) = \max\{\|\nabla F(\xb)\|_2^{3/2},-\lambda_{\min}^3\big(\nabla^2 F(\xb)\big)/{\rho^{3/2}}\}$.
\end{definition}
It is easy to note that $\mu(\xb) \leq \epsilon^{3/2}$ if and only if $\xb$ is an $(\epsilon, \sqrt{\rho\epsilon})$-approximate local minimum. Thus, in order to find an $(\epsilon, \sqrt{\rho\epsilon})$-approximate local minimum, it suffices to find a point $\xb$ which satisfies $\mu(\xb) \leq \epsilon^{3/2}$.

The following theorem provides the convergence guarantee of $\nameheavy$ for finding an $(\epsilon, \sqrt{\rho\epsilon})$-approximate local minimum.

\begin{theorem}\label{Theorem1}
Under Assumptions \ref{assumption_1}, \ref{assumption_2} and \ref{assumption_3},  set the cubic penalty parameter $M_t =4\rho$ for any $t$ and the total iteration number $T \geq 40 \Delta_F\rho^{1/2}\epsilon^{-3/2}$. 
For $t$ such that $\mod(t,S^{(g)}) \neq 0$ or $\mod(t,S^{(h)}) \neq 0$, set the gradient sample size $B_t^{(g)}$ and Hessian sample size $B_t^{(h)}$ as
\begin{align}
    B^{(g)}_t &\geq n \land  \frac{1440L^2S^{(g)}\|\hb_{t-1}\|_2^2\log^2(2T/\xi)}{\epsilon^2},\label{gradientVariance-1}\\
     B^{(h)}_t &\geq n \land  \frac{800\rho S^{(h)}\|\hb_{t-1}\|_2^2\log^2(2Td/\xi)}{\epsilon} \label{hessianVariance-1}.
\end{align}
For $t$ such that $\mod(t,S^{(g)}) = 0$ or $\mod(t,S^{(h)}) = 0$, set the gradient sample size $B_t^{(g)}$ and Hessian sample size $B_t^{(h)}$ as
\begin{align}
    B^{(g)}_t &\geq n \land  \frac{1440\upp^2\log^2(2 T   /\xi)}{\epsilon^2},\label{gradientVariance-2}\\
    B^{(h)}_t &\geq n \land \frac{800L^2\log^2(2Td/\xi)}{\rho\epsilon}\label{hessianVariance-2}.
\end{align}
Then with probability at least $1-\xi$, $\nameheavy$ outputs $\xb_{\text{out}}$ satisfying $\mu(\xb_{\text{out}}) \leq 600 \epsilon^{3/2}$, i.e., an $(\epsilon, \sqrt{\rho \epsilon})$-approximate local minimum.  
\end{theorem}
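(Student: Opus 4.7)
The plan is to combine a cubic-regularization descent argument with high-probability control of the SPIDER-style estimator errors. The proof breaks into three ingredients: (i) uniform bounds $\|\vb_t - \dF(\xb_t)\|_2 \lesssim \epsilon$ and $\|\Ub_t - \HF(\xb_t)\|_2 \lesssim \sqrt{\rho\epsilon}$ holding across all $t$ with probability at least $1-\xi$; (ii) a per-step descent inequality translating these errors and the subproblem minimizer $\hb_t$ into a controlled drop in $F$; and (iii) a terminating-step lemma converting the stopping condition $\|\hb_t\|_2 \le \sqrt{\epsilon/\rho}$ into an $(\epsilon,\sqrt{\rho\epsilon})$-approximate local minimum guarantee via the first- and second-order optimality conditions of the cubic subproblem.

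For step (i), at a reset iteration ($\mod(t,S^{(g)})=0$ or $\mod(t,S^{(h)})=0$) the estimator is a plain mini-batch average, so a vector Bernstein bound under Assumption \ref{assumption_3} and a matrix Bernstein bound under Assumption \ref{assumption_2} (each summand's spectral norm being at most $2L$) turn the batch sizes \eqref{gradientVariance-2}--\eqref{hessianVariance-2} directly into the desired tail bounds. For a non-reset iteration $t$ lying in the epoch starting at $t_0 := S^{(g)}\lfloor t/S^{(g)}\rfloor$, I would telescope
\begin{align*}
\vb_t - \dF(\xb_t) = \bigl(\vb_{t_0} - \dF(\xb_{t_0})\bigr) + \sum_{s=t_0+1}^{t}\Delta_s^{(g)},\quad \Delta_s^{(g)} := \df_{\cJ_s}(\xb_s) - \df_{\cJ_s}(\xb_{s-1}) - \bigl(\dF(\xb_s)-\dF(\xb_{s-1})\bigr),
\end{align*}
and analogously for $\Ub_t - \HF(\xb_t)$. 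Each fresh term $\Delta_s^{(g)}$ has zero conditional mean and, by the $L$-gradient (resp.\ $\rho$-Hessian) Lipschitz assumption, conditional squared norm of order $L^2\|\hb_{s-1}\|_2^2/B_s^{(g)}$ (resp.\ $\rho^2\|\hb_{s-1}\|_2^2/B_s^{(h)}$). The choices \eqref{gradientVariance-1}--\eqref{hessianVariance-1} guarantee that each summand contributes at most $\epsilon^2/S^{(g)}$ (resp.\ $\rho\epsilon/S^{(h)}$), so a martingale vector/matrix Bernstein inequality followed by a union bound across the at most $T$ iterations promotes this into the uniform high-probability bound. I expect this to be the main technical obstacle, because $B_s^{(g)}$ and $B_s^{(h)}$ themselves depend on the random step $\hb_{s-1}$, forcing the concentration step to be performed in a conditional, martingale form rather than with off-the-shelf i.i.d.\ tail bounds.

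For step (ii), combining $\rho$-Hessian Lipschitzness with $M_t = 4\rho$ and the definition of $m_t$, I would derive
\begin{align*}
F(\xb_{t+1}) \le F(\xb_t) + m_t(\hb_t) + \|\vb_t-\dF(\xb_t)\|_2\|\hb_t\|_2 + \tfrac{1}{2}\|\Ub_t-\HF(\xb_t)\|_2\|\hb_t\|_2^2 - \tfrac{M_t - \rho}{6}\|\hb_t\|_2^3,
\end{align*}
and then apply the classical Nesterov--Polyak inequality $m_t(\hb_t) \le -\tfrac{M_t}{12}\|\hb_t\|_2^3$ for the cubic-model minimizer. Substituting the uniform estimator bounds from step (i) and absorbing the two error cross-terms into a fraction of $\rho\|\hb_t\|_2^3$ via Young's inequality produces a clean per-step decrease $F(\xb_{t+1}) \le F(\xb_t) - c\,\rho\|\hb_t\|_2^3 + c'\epsilon^{3/2}/\sqrt{\rho}$. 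So long as the algorithm has not stopped, $\|\hb_t\|_2 > \sqrt{\epsilon/\rho}$, making the net per-step drop of order $\rho^{-1/2}\epsilon^{3/2}$; telescoping over $T \ge 40\Delta_F\rho^{1/2}\epsilon^{-3/2}$ iterations would drive $F(\xb_T) < \inf F$, contradicting Assumption \ref{assumption_1}, so the termination test must trigger.

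Finally, at the terminating index the first- and second-order optimality conditions of \eqref{stochas_subproblem} read
\begin{align*}
\vb_t + \Ub_t\hb_t + \tfrac{M_t}{2}\|\hb_t\|_2\hb_t = 0,\qquad \Ub_t + \tfrac{M_t}{2}\|\hb_t\|_2\Ib \succeq 0.
\end{align*}
Splitting $\dF(\xb_{t+1}) = \bigl(\dF(\xb_{t+1})-\dF(\xb_t)-\HF(\xb_t)\hb_t\bigr) + \bigl(\HF(\xb_t)-\Ub_t\bigr)\hb_t + \bigl(\dF(\xb_t)-\vb_t\bigr) - \tfrac{M_t}{2}\|\hb_t\|_2\hb_t$ and bounding the four pieces by $\tfrac{\rho}{2}\|\hb_t\|_2^2$, $\sqrt{\rho\epsilon}\,\|\hb_t\|_2$, $\epsilon$, and $2\rho\|\hb_t\|_2^2$ respectively yields $\|\dF(\xb_{t+1})\|_2 \lesssim \epsilon$ after inserting $\|\hb_t\|_2 \le \sqrt{\epsilon/\rho}$. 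An analogous chain $\HF(\xb_{t+1}) \succeq \HF(\xb_t) - \rho\|\hb_t\|_2\Ib \succeq \Ub_t - \bigl(\|\Ub_t-\HF(\xb_t)\|_2 + \rho\|\hb_t\|_2\bigr)\Ib$ combined with the second-order inequality gives $\mineig(\HF(\xb_{t+1})) \ge -O(\sqrt{\rho\epsilon})$. Raising to the $3/2$ power, absorbing absolute constants into the factor $600$, and taking the union-bound failure probability over the $T$ iterations to be at most $\xi$ delivers $\mu(\xb_{\text{out}}) \le 600\,\epsilon^{3/2}$ with probability at least $1-\xi$.
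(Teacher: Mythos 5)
Your proposal is correct and follows essentially the same route as the paper: martingale (Azuma/Bernstein-type) concentration for the SPIDER gradient and Hessian estimators, a per-step descent inequality driven by $\rho$-Hessian Lipschitzness and Young's inequality, telescoping against the stopping rule $\|\hb_t\|_2 > \sqrt{\epsilon/\rho}$ under Assumption~\ref{assumption_1}, and a final-step argument using the exact optimality conditions of the cubic subproblem. The only cosmetic differences are that you invoke the Nesterov--Polyak lower bound $m_t(\hb_t)\le -\tfrac{M_t}{12}\|\hb_t\|_2^3$ (the paper only needs the weaker $m_t(\hb_t)\le 0$, absorbing everything through $M_t=4\rho$ instead), and that you unfold the terminal-iterate argument from first principles rather than delegating it to Lemma~\ref{mu_and_h} together with $\nabla m_{\tilde T}(\hb_{\tilde T})=0$ and $\|\hb_{\tilde T}\|_2=\|\hb_{\tilde T}^*\|_2$; both versions give the same bound.
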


Next corollary spells out the exact gradient complexity and Hessian complexity of $\nameheavy$ to find an $(\epsilon, \sqrt{\rho \epsilon})$-approximate local minimum. 
\begin{corollary}\label{corollary_1}
Under the same conditions as Theorem \ref{Theorem1}, if we set $S^{(g)}, S^{(h)}$ as $S^{(g)} =  \sqrt{\rho\epsilon}/L\cdot\sqrt{n \land M^2/\epsilon^2}$ and $S^{(h)} = \sqrt{n \land L/(\rho\epsilon)}$, 
and set $T, \{B^{(g)}_t\}, \{B^{(h)}_t\}$ as their lower bounds in \eqref{gradientVariance-1}- \eqref{hessianVariance-2}, then with probability at least $1-\xi$, $\nameheavy$ will output an $(\epsilon, \sqrt{\rho \epsilon})$-approximate local minimum within 
\begin{align}
    \tilde O\bigg(n\land \frac{L^2}{\rho\epsilon} + \frac{\sqrt{\rho}\Delta_F}{\epsilon^{3/2}}\sqrt{n\land \frac{L^2}{\rho\epsilon}}\bigg)\notag
\end{align}
stochastic Hessian evaluations and
\begin{align}
    \tilde O\bigg(n\land \frac{\upp^2}{\epsilon^2} + \frac{\Delta_F}{\epsilon^{3/2}}\bigg[\sqrt{\rho}n \land \frac{L \sqrt{n} }{\sqrt{\epsilon}} \land \frac{L M }{\epsilon^{3/2}}\bigg]\bigg)\notag
\end{align}
stochastic gradient evaluations. 
\end{corollary}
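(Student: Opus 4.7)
The plan is to read off the total gradient and Hessian cost by summing the per-iteration sample sizes over the $T = O(\Delta_F \rho^{1/2}\epsilon^{-3/2})$ iterations of Algorithm \ref{algorithm:1}, splitting each count into its \emph{reset} contribution (indices $t$ with $\mod(t,S^{(g)})=0$ or $\mod(t,S^{(h)})=0$) and its \emph{recursive} contribution. The reset iterations each draw a fixed batch of size $\tilde O(n\land M^2/\epsilon^2)$ for the gradient and $\tilde O(n\land L^2/(\rho\epsilon))$ for the Hessian and occur roughly $T/S^{(g)}$ and $T/S^{(h)}$ many times respectively. The recursive iterations draw batches of size $\tilde O(L^2 S^{(g)}\|\hb_{t-1}\|_2^2/\epsilon^2)$ and $\tilde O(\rho S^{(h)}\|\hb_{t-1}\|_2^2/\epsilon)$, so their total cost scales with $\sum_t \|\hb_{t-1}\|_2^2$.

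The key technical step is to control this last sum. I would reuse the per-step function decrease already established along the path to Theorem \ref{Theorem1}: under the prescribed batch sizes, the semi-stochastic cubic step enjoys $F(\xb_t) - F(\xb_{t+1}) \gtrsim \rho \|\hb_t\|_2^3$ with high probability, which telescopes via Assumption \ref{assumption_1} into $\sum_{t=0}^{T-1} \|\hb_t\|_2^3 \leq O(\Delta_F/\rho)$. Because the algorithm has not terminated at any iteration $t<T$, we additionally have $\|\hb_t\|_2 \geq \sqrt{\epsilon/\rho}$, so
\[
\sum_t \|\hb_t\|_2^2 \;\leq\; \sqrt{\rho/\epsilon}\cdot\sum_t \|\hb_t\|_2^3 \;\leq\; O\big(\Delta_F/\sqrt{\rho\epsilon}\big).
\]

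With this bound in hand, assembling the Hessian complexity is mechanical: the reset contribution is $\tilde O\big((T/S^{(h)})\cdot(n\land L^2/(\rho\epsilon))\big)$ and the recursive contribution is $\tilde O\big(\rho S^{(h)}/\epsilon \cdot \Delta_F/\sqrt{\rho\epsilon}\big) = \tilde O\big(S^{(h)}\sqrt{\rho}\Delta_F/\epsilon^{3/2}\big)$. Plugging in $S^{(h)} = \sqrt{n\land L/(\rho\epsilon)}$ and $T = O(\Delta_F\sqrt{\rho}/\epsilon^{3/2})$ and doing a two-case analysis on whether $n$ or $L/(\rho\epsilon)$ is the binding term in the min merges the two contributions into the stated Hessian bound, with a single leftover reset giving the additive $n\land L^2/(\rho\epsilon)$ term. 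The gradient bound follows analogously with $S^{(g)} = (\sqrt{\rho\epsilon}/L)\sqrt{n\land M^2/\epsilon^2}$; the three-way $\land$ in the final expression arises from comparing the three regimes obtained by intersecting $n$ against $M^2/\epsilon^2$ and $L^2/\epsilon$.

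I expect the main obstacle to be the case analysis verifying that the chosen epoch length $S^{(h)}$ (respectively $S^{(g)}$) is the value that simultaneously balances the reset and recursive costs, so that no mismatched mins survive in the final expression. A secondary but routine point is that the last epoch may be truncated by the stopping rule, which only adds a single extra reset batch and is absorbed into the $\tilde O(\cdot)$.
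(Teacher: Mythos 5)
Your proposal is correct and follows the same overall plan as the paper: split the sample count into reset and recursive contributions, use the per-step decrease from the proof of Theorem~\ref{Theorem1} to get $\sum_t\|\hb_t\|_2^3\le O(\Delta_F/\rho)$, convert that into a bound $\sum_t\|\hb_t\|_2^2\le O(\Delta_F/\sqrt{\rho\epsilon})$, and then assemble the totals with the chosen $S^{(g)},S^{(h)}$. The one place where you diverge from the paper is the conversion from the cubic sum to the quadratic sum: you invoke the stopping rule $\|\hb_t\|_2\ge\sqrt{\epsilon/\rho}$ (valid for every non-final iteration) to write $\|\hb_t\|_2^2\le\sqrt{\rho/\epsilon}\,\|\hb_t\|_2^3$, while the paper instead applies H\"older's inequality, $\sum_t\|\hb_t\|_2^2\le (T^*)^{1/3}\big(\sum_t\|\hb_t\|_2^3\big)^{2/3}$, together with $T^*\le T=O(\Delta_F\sqrt{\rho}/\epsilon^{3/2})$. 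Both give the same $O(\Delta_F/\sqrt{\rho\epsilon})$; your version requires noting that the last (terminating) step violates the lower bound but contributes only $\|\hb_{T^*-1}\|_2^2\le\epsilon/\rho$, which is absorbed, whereas H\"older sidesteps the stopping condition entirely and uses only the iteration count bound. Either is fine here; H\"older has the small advantage of also working if one ever removes the early-stopping rule. Everything else you describe (balancing reset vs.\ recursive cost to pick $S^{(g)},S^{(h)}$, taking the $\land\,n$ per term, and the case analysis on which side of the $\land$ binds) is exactly the mechanism in the paper's proof.
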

\begin{remark}
For $\nameheavy$, if we assume $\upp, L, \rho, \Delta_F$ to be constants, then its gradient complexity is $\tilde O(n/\epsilon^{3/2} \land  \sqrt{n}/\epsilon^2\land \epsilon^{-3} )$, 
and its Hessian complexity is $\tilde O(n\land \epsilon^{-1} + n^{1/2}\epsilon^{-3/2} \land \epsilon^{-2})$.
Regarding Hessian complexity, suppose that $\epsilon \ll 1$, then the Hessian complexity of $\nameheavy$ can be simplified as $\tilde O (n^{1/2}\epsilon^{-3/2} \land \epsilon^{-2})$. Compared with existing SVRC algorithms \citep{zhou2018sample,zhang2018adaptive,wang2018sample}, $\nameheavy$ outperforms the best-known Hessian sample complexity by a factor of $n^{1/6} \land n^{2/3}\epsilon^{1/2}$. In terms of gradient complexity, $\nameheavy$ outperforms STR2 \citep{shen2019stochastic} by a factor of $n^{3/4}\epsilon^{3/2}$ when $\epsilon \gg n^{-1/2}$.
\end{remark}
\begin{remark}
Note that both Theorem \ref{Theorem1} and Corollary \ref{corollary_1} still hold when Assumption \ref{assumption_3} does not hold. In that case, $M = \infty$ and $\nameheavy$'s Hessian complexity remains the same, while its gradient complexity can be potentially worse, i.e., $\tilde O(n/\epsilon^{3/2} \land  \sqrt{n}/\epsilon^2)$, which degenerates to that of STR1 \citep{shen2019stochastic}. 
\end{remark}

\section{Hessian-Free $\nameheavy$}\label{sec:hessfree}
\begin{algorithm*}[h]
\caption{Hessian Free Stochastic Recursive Variance-Reduced Cubic Regularization ($\namefree$)}\label{algorithm:2}
\begin{algorithmic}[1]
  \STATE \textbf{Input:} Total iterations $T$, batch sizes $\{B_t^{(g)}\}_{t=1}^T, \{B_t^{(h)}\}_{t=1}^T$, cubic penalty parameter $\{M_{t}\}_{t=1}^T$, inner gradient length $S^{(g)}$, initial point $\xb_0$, accuracy $\epsilon$, Hessian Lipschitz constant $\rho$, gradient Lipschitz constant $L$ and failure probability $\xi$.
  \FOR{$t=0,\ldots,T-1$}
   \STATE Sample index set $\cJ_t, |\cJ_t| = B_t^{(g)};$ $\cI_t, |\cI_t| = B_t^{(h)};$
 \begin{align}
      \vb_t &\leftarrow  \begin{cases}
               \nabla f_{\cJ_t}(\xb_t), &\mod (t,S^{(g)}) = 0\\
               \nabla f_{\cJ_t}(\xb_t) - \nabla f_{\cJ_t}(\xb_{t-1}) +\vb_{t-1},& \text{else}
            \end{cases},
            \Ub_t[\cdot] \leftarrow  \nabla^2 f_{\cI_t}(\xb_t)[\cdot]\notag
 \end{align}
  \STATE $\hb_t \leftarrow \namesub(\Ub_t[\cdot],\vb_t,  M_t,1/(16L), \sqrt{\epsilon/\rho}, 0.5,\xi/(3T))$ \COMMENT{See Algorithm \ref{alg:sub} in Appendix \ref{appendix: add_apgorithm}}
  \IF{$m_t(\hb_t) < -4\rho^{-1/2}\epsilon^{3/2}$}
  \STATE $\xb_{t+1} \leftarrow \xb_t + \hb_t$
  \ELSE
  \STATE $\hb_t \leftarrow \namefinal(\Ub_t[\cdot],\vb_t,  M_t, 1/(16L), \epsilon)$
  \COMMENT{See Algorithm \ref{alg:finalsub} in Appendix \ref{appendix: add_apgorithm}}
  \STATE \textbf{return} $\xb_{t+1} \leftarrow \xb_t + \hb_t$
  \ENDIF

  \ENDFOR
  
\end{algorithmic}
\end{algorithm*}

While $\nameheavy$ adapts novel semi-stochastic gradient and Hessian estimators to reduce both the gradient and Hessian complexities, 
it has three limitations for high-dimensional problems with $d \gg 1$: 
(1) it needs to compute and store the Hessian matrix, which needs $O(d^2)$ computational time and storage space;  (2) it needs to solve cubic subproblem $m_t$ exactly, which requires $O(d^{w})$ computational time because it needs to compute the inverse of a Hessian matrix \citep{Nesterov2006Cubic}; and (3) it cannot leverage the Hessian-vector product-based cubic subproblem solvers \citep{Agarwal2017Finding, Carmon2016Gradient, NIPS2018_8269} because of the use of the semi-stochastic Hessian estimator. It is interesting to ask whether we can modify $\nameheavy$ to overcome these shortcomings.

\subsection{Algorithm Description}


We present a Hessian-free algorithm $\namefree$ to address above limitations of $\nameheavy$ for high-dimensional problems, which only requires stochastic gradient and Hessian-vector product computations. $\namefree$ uses the same semi-stochastic gradient $\vb_t$ as $\nameheavy$. As opposed to $\nameheavy$ which has to construct semi-stochastic Hessian explicitly, $\namefree$ only accesses to stochastic Hessian-vector product. 
In detail, at each iteration $t$, $\namefree$ subsamples an index set $\cI_t$ and defines a stochastic Hessian-vector product function $\Ub_t[\cdot]:\RR^d \rightarrow \RR^d $ as follows:
\begin{align}
    \Ub_t[\vb] &= \Hf_{\cI_t}(\xb_t)[\vb], &\forall \vb \in \RR^d.\notag 
\end{align}
Note that although the subproblem depends on $\Ub_t$, $\namefree$ never explicitly computes this matrix. Instead, it only provides the subproblem solver access to $\Ub_t$ through stochastic Hessian-vector product function $\Ub_t[\cdot]$. The subproblem solver performs gradient-based optimization to solve the subproblem $m_t(\hb)$ as $\nabla m_t(\hb)$ depends on $\Ub_t$ only via $\Ub_t[\hb]$.
In detail, following \cite{tripuraneni2018stochastic}, $\namefree$ uses $\namesub$ (See Algorithms \ref{alg:sub} and \ref{alg:finalsub} in Appendix \ref{appendix: add_apgorithm}) and $\namefinal$ from \citep{Carmon2016Gradient}, to find an approximate solution $\hb_t$ to the cubic subproblem in \eqref{stochas_subproblem}. 
Both $\namesub$ and $\namefinal$ only need to access gradient $\vb_t$ and Hessian-vector product function $\Ub_t[\cdot]$ along with other problem-dependent parameters. With the output $\hb_t$ from $\namesub$, $\namefree$ decides either to update $\xb_t$ as $\xb_{t+1} \leftarrow \xb_t + \hb_t$ or to exit the loop. For the later case, $\namefree$ will call $\namefinal$ to output $\hb_t$, and takes $\xb_{t+1} = \xb_t + \hb_t$ as its final output. 


The main differences between $\nameheavy$ and $\namefree$ are two-fold. First, $\namefree$ only needs to compute stochastic gradient and Hessian-vector product. Since both of these two computations only take $O(d)$ time in many applications in machine learning, $\namefree$ is suitable for high-dimensional problems. In the sequel, following \citet{Agarwal2017Finding, Carmon2016Accelerated, tripuraneni2018stochastic}, we do not distinguish stochastic gradient and Hessian-vector product computations and consider them to have the same runtime complexity. Second, instead of solving cubic subproblem $m_t$ exactly, $\namefree$ adopts approximate subproblem solver $\namesub$ and $\namefinal$, both of which only need to access gradient and Hessian-vector product function, and again only take $O(d)$ time. Thus, $\namefree$ is computational more efficient than $\nameheavy$ when $d \gg 1$.

\subsection{Convergence Analysis}
We now provide the convergence guarantee of $\namefree$, which ensures that $\namefree$ will output an $(\epsilon, \sqrt{\rho\epsilon})$-approximate local minimum.
\begin{theorem}\label{thm:2}
Under Assumptions \ref{assumption_1}, \ref{assumption_2}, \ref{assumption_3}, suppose $\epsilon < L/(4\rho)$. Set the cubic penalty parameter $M_t =4\rho$ for any $t$ and the total iteration number $T \geq 25\Delta_F\rho^{1/2}\epsilon^{-3/2}$. Set the Hessian-vector product sample size $B_t^{(h)}$ as
\begin{align}
    B^{(h)}_t &\geq n\land \frac{1200L^2\log^2(3Td/\xi)}{\rho\epsilon}.\label{HessianVariance-3} 
\end{align}
For $t$ such that $\mod(t, S^{(g)}) \neq 0$, set the gradient sample size $B^{(g)}_t$ as
\begin{align}
    B^{(g)}_t \geq n\land \frac{2640L^2S^{(g)}\| \hb_{t-1}\|_2^2\log^2(3T/\xi)}{\epsilon^2}.\label{gradientVariance-3}
\end{align}
For $t$ such that $\mod(t,S^{(g)}) = 0$, set the gradient sample size $B^{(g)}_t$ as
\begin{align}
    B^{(g)}_t &\geq n\land \frac{2640\upp^2\log^2(3T/\xi)}{\epsilon^2}.\label{gradientVariance-4}
\end{align}
Then with probability at least $1-\xi$, $\namefree$ outputs $\xb_{\text{out}}$ satisfying $\mu(\xb_{\text{out}}) \leq 1300 \epsilon^{3/2}$, i.e., an $(\epsilon, \sqrt{\rho \epsilon})$-approximate local minimum. 
\end{theorem}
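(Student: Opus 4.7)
\medskip

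\noindent\textbf{Proof proposal for Theorem \ref{thm:2}.} The plan is to follow the same high-level template used for Theorem \ref{Theorem1} (descent + termination certificate), but to replace the explicit semi-stochastic Hessian by a Hessian-vector-product oracle whose spectral accuracy is controlled by matrix concentration, and to replace exact subproblem optimality by the approximate guarantees of \namesub{} and \namefinal. There are four pieces: (i) concentration of the SPIDER-style gradient $\vb_t$, (ii) concentration of the subsampled Hessian operator $\Ub_t[\cdot]$, (iii) a per-iteration descent lemma valid under these approximation errors, and (iv) a final-iterate certificate.

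For (i) I would reuse verbatim the martingale argument from the proof of Theorem \ref{Theorem1}: writing $\vb_t - \dF(\xb_t)$ as a telescoping sum over the current epoch of mean-zero terms $[\df_{\cJ_s}(\xb_s)-\df_{\cJ_s}(\xb_{s-1})]-[\dF(\xb_s)-\dF(\xb_{s-1})]$, each bounded in norm by $2L\|\hb_{s-1}\|_2$ by Assumption~\ref{assumption_2} and having variance at most $L^2\|\hb_{s-1}\|_2^2/B_s^{(g)}$, a vector Azuma/Bernstein inequality combined with \eqref{gradientVariance-3}--\eqref{gradientVariance-4} gives $\|\vb_t-\dF(\xb_t)\|_2 \le \epsilon/\text{const}$ uniformly in $t$ with probability $\ge 1-\xi/3$. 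For (ii) the estimator is \emph{not} recursive, so I would invoke a matrix Bernstein inequality on $\Ub_t - \HF(\xb_t) = \frac{1}{B_t^{(h)}}\sum_{i\in\cI_t}(\Hf_i(\xb_t)-\HF(\xb_t))$; since each summand has spectral norm $\le 2L$ and variance $\le L^2$, the lower bound \eqref{HessianVariance-3} yields $\|\Ub_t-\HF(\xb_t)\|_2 \le \sqrt{\rho\epsilon}/\text{const}$ for all $t$ with probability $\ge 1-\xi/3$. Note that we only ever access $\Ub_t$ through $\vb\mapsto\Ub_t[\vb]$, and this spectral bound is exactly what the cubic descent argument requires.

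For (iii), condition on both concentration events and fix an iteration with $m_t(\hb_t) < -4\rho^{-1/2}\epsilon^{3/2}$. By Assumption~\ref{assumption_2},
\begin{align*}
F(\xb_{t+1}) \le F(\xb_t) + \la \dF(\xb_t),\hb_t\ra + \tfrac{1}{2}\la \HF(\xb_t)\hb_t,\hb_t\ra + \tfrac{\rho}{6}\|\hb_t\|_2^3,
\end{align*}
and adding and subtracting $m_t(\hb_t) = \la\vb_t,\hb_t\ra + \tfrac12\la\Ub_t\hb_t,\hb_t\ra + \tfrac{2\rho}{3}\|\hb_t\|_2^3$ (since $M_t=4\rho$) gives
\begin{align*}
F(\xb_{t+1}) \le F(\xb_t) + m_t(\hb_t) + \la\dF(\xb_t)-\vb_t,\hb_t\ra + \tfrac12\la(\HF(\xb_t)-\Ub_t)\hb_t,\hb_t\ra - \tfrac{\rho}{2}\|\hb_t\|_2^3.
\end{align*}
The two error terms are bounded using (i)--(ii) by $O(\epsilon)\|\hb_t\|_2$ and $O(\sqrt{\rho\epsilon})\|\hb_t\|_2^2$, both of which are absorbed by the $-\tfrac{\rho}{2}\|\hb_t\|_2^3$ term (by AM-GM, using $\|\hb_t\|_2 \ge \sqrt{\epsilon/\rho}$ implied by the $\namesub$ guarantee whenever $m_t(\hb_t)$ is sufficiently negative). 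Thus $F(\xb_{t+1}) \le F(\xb_t) - 2\rho^{-1/2}\epsilon^{3/2}$, and by Assumption~\ref{assumption_1} the number of such ``successful'' iterations is at most $\Delta_F\rho^{1/2}\epsilon^{-3/2}/2 < T$. Hence the else-branch must trigger within $T$ iterations.

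For (iv), once the else-branch triggers, \namefinal{} returns $\hb_t$ which, by the Carmon--Duchi guarantee, is an approximate minimizer of the convex-after-shift problem $m_t$ with $\|\nabla m_t(\hb_t)\|_2 \le \epsilon/\text{const}$. Together with the near-optimality condition $\Ub_t + \tfrac{M_t}{2}\|\hb_t\|_2 \Ib \succeq -O(\sqrt{\rho\epsilon})\Ib$ (again from the Carmon--Duchi guarantee), I translate back to $F$ using (i)--(ii): $\|\dF(\xb_{t+1})\|_2 \le \|\nabla m_t(\hb_t)\|_2 + \|\dF(\xb_{t+1})-\vb_t-\Ub_t\hb_t-\tfrac{M_t}{2}\|\hb_t\|_2\hb_t\|_2$, where the second term is bounded by the gradient error, the Hessian error times $\|\hb_t\|_2$, and the Hessian-Lipschitz remainder $\tfrac{\rho}{2}\|\hb_t\|_2^2$. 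A similar sandwich for $\mineig(\HF(\xb_{t+1}))$ against $\mineig(\Ub_t + \tfrac{M_t}{2}\|\hb_t\|_2\Ib)$ plus the Hessian-Lipschitz error yields $\mineig(\HF(\xb_{t+1})) \ge -O(\sqrt{\rho\epsilon})$. Combining the two bounds gives $\mu(\xb_{t+1}) \le 1300\epsilon^{3/2}$. A union bound over the three failure events ($\xi/3$ each for gradient, Hessian, and the aggregated subsolver calls, whose per-call failure probability is set to $\xi/(3T)$ in the algorithm) yields overall success probability $\ge 1-\xi$.

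The main obstacle I expect is bookkeeping in step (iv): the subsolver only certifies approximate stationarity/second-order conditions \emph{for $m_t$}, while the theorem demands them \emph{for $F$ at $\xb_{t+1}$}, so the error budgets from the gradient/Hessian concentration, the Hessian-Lipschitz Taylor remainder, and the subsolver tolerances must all be matched to constants that yield the stated $1300\epsilon^{3/2}$ bound; in particular the assumption $\epsilon < L/(4\rho)$ is needed precisely to ensure $\|\hb_t\|_2 \lesssim \sqrt{L/\rho}$ so that these quadratic-in-$\|\hb_t\|_2$ error terms remain $O(\sqrt{\rho\epsilon})$.
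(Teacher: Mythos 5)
Your plan follows the same four-piece template as the paper's proof of Theorem~\ref{thm:2} — SPIDER-style gradient concentration (Lemma~\ref{gradientVariance2}), matrix concentration for the non-recursive Hessian oracle (Lemma~\ref{HessianVariance2}), a per-iteration descent inequality (equivalent to \eqref{Theorem2_0}), and a final-iterate $\mu$-certificate. However, one step in (iii) would fail as written: you claim $\|\hb_t\|_2 \geq \sqrt{\epsilon/\rho}$ "is implied by the $\namesub$ guarantee whenever $m_t(\hb_t)$ is sufficiently negative" and use this to absorb the $O(\epsilon)\|\hb_t\|_2$ and $O(\sqrt{\rho\epsilon})\|\hb_t\|_2^2$ error terms by AM--GM. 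That lower bound does not follow: Lemma~\ref{sufficientde} only certifies the \emph{value} $m_t(\hb_t) \leq -M_t\rho^{-3/2}\epsilon^{3/2}/24$ and says nothing about $\|\hb_t\|_2$; a short step can achieve a very negative model value when $\|\vb_t\|_2$ is large. The paper sidesteps this with Young's inequality (Lemma~\ref{lemma:youngs}), trading $\la \dF(\xb_t)-\vb_t,\hb\ra$ for $\tfrac{\rho}{8}\|\hb\|_2^3 + \tfrac{6}{5\sqrt{\rho}}\|\dF(\xb_t)-\vb_t\|_2^{3/2}$ and similarly for the Hessian error term, so that the per-step decrease is driven purely by the negativity of $m_t(\hb_t)$ and no lower bound on $\|\hb_t\|_2$ is ever needed.

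A secondary, milder imprecision is in (iv): the spectral near-optimality $\Ub_t + \tfrac{M_t}{2}\|\hb_t\|_2\Ib \succeq -O(\sqrt{\rho\epsilon})\Ib$ at the approximate solution $\hb_t$ is not what Carmon--Duchi quote. What Lemma~\ref{carmon_1} actually certifies is norm monotonicity of $\namefinal$'s iterates, $\|\hb_t\|_2 \leq \|\hb_t^*\|_2$, together with the gradient-norm stopping rule $\|\nabla m_t(\hb_t)\|_2 \leq \epsilon$. The paper's Lemma~\ref{mu_and_h} is written precisely in those terms — carrying the penalty $M_t^3\rho^{-3/2}\big|\|\hb_t\|_2 - \|\hb_t^*\|_2\big|^3$ — so that the final bound falls out of $\|\hb_t\|_2 \leq \|\hb_t^*\|_2 \leq \sqrt{\epsilon/\rho}$ (the latter from the contrapositive of Lemma~\ref{sufficientde}) without ever re-deriving a spectral condition at $\hb_t$. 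Your version is derivable from the norm bound, but it inserts a translation step that the existing lemmas make unnecessary. Substituting those two steps with the paper's Lemmas \ref{lemma:youngs}, \ref{carmon_1}, and \ref{mu_and_h} makes your outline coincide with the actual proof.
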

The following corollary calculates the total amount of stochastic gradient and Hessian-vector product computations of $\namefree$ to find an $(\epsilon, \sqrt{\rho\epsilon})$-approximate local minimum. 

\begin{corollary}\label{free_total_count}
Under the same conditions as Theorem \ref{thm:2}, if set $ S^{(g)} = \sqrt{\rho\epsilon}/L\cdot\sqrt{n \land M^2/\epsilon^2}$
and set $T, \{B^{(g)}_t\}, \{B^{(h)}_t\}$ as their lower bounds in \eqref{HessianVariance-3}-\eqref{gradientVariance-4}, then with probability at least $1-\xi$, $\namefree$ will output an $(\epsilon, \sqrt{\rho\epsilon})$-approximate local minimum within
\begin{align}
    & \tilde O\bigg[
    \bigg(n \land \frac{\upp^2}{\epsilon^2}\bigg)+\frac{\Delta_F }{\epsilon^{3/2}}\bigg(\sqrt{\rho}n\land \frac{L\sqrt{n}}{\sqrt{\epsilon}} \land \frac{LM}{\epsilon^{3/2}}\bigg)   + \bigg(\frac{L\Delta_F}{\epsilon^2} + \frac{L}{\sqrt{\rho\epsilon}}\bigg)\cdot \bigg(n\land \frac{L^2}{\rho\epsilon}\bigg) 
    \bigg]\label{free_total_count_1}
\end{align}
stochastic gradient and Hessian-vector product computations. 
\end{corollary}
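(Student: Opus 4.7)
The plan is to separately account for the total number of stochastic gradient evaluations and the total number of stochastic Hessian-vector product evaluations across all outer iterations, using the iteration bound from Theorem \ref{thm:2} together with the prescribed batch sizes \eqref{HessianVariance-3}--\eqref{gradientVariance-4} and the choice of $S^{(g)}$.

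I would first handle the gradient cost. By Theorem \ref{thm:2}, $\namefree$ terminates within at most $T = O(\Delta_F\rho^{1/2}/\epsilon^{3/2})$ outer iterations. Partitioning $\{0,\dots,T-1\}$ into epochs of length $S^{(g)}$, each epoch contains exactly one ``reset'' iteration ($\mod(t,S^{(g)})=0$) of cost $\tilde O(n\land M^2/\epsilon^2)$ via \eqref{gradientVariance-4}, and up to $S^{(g)}-1$ ``update'' iterations each of cost $\tilde O(n\land L^2 S^{(g)}\|\hb_{t-1}\|_2^2/\epsilon^2)$ via \eqref{gradientVariance-3}. Summing the reset cost gives $\tilde O\bigl((T/S^{(g)})(n\land M^2/\epsilon^2)\bigr)$. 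For the update cost the key step is to bound $\sum_t\|\hb_t\|_2^2$: I would first invoke the cubic descent inequality already established in the proof of Theorem \ref{thm:2} (each non-terminating step satisfies $m_t(\hb_t)<-4\rho^{-1/2}\epsilon^{3/2}$ which, after accounting for the controlled variance error of $\vb_t,\Ub_t$, yields $F(\xb_{t+1})-F(\xb_t)\le-c\rho\|\hb_t\|_2^3$), telescope with Assumption \ref{assumption_1} to get $\sum_t\|\hb_t\|_2^3 = O(\Delta_F/\rho)$, and then apply H\"older's inequality with the bound on $T$ to conclude $\sum_t\|\hb_t\|_2^2 = \tilde O\bigl(\Delta_F/(\rho^{1/2}\epsilon^{1/2})\bigr)$. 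Plugging in $S^{(g)}=\sqrt{\rho\epsilon}/L\cdot\sqrt{n\land M^2/\epsilon^2}$ makes the reset and update contributions both evaluate to the same order $\tilde O\bigl(L\Delta_F/\epsilon^2\cdot\sqrt{n\land M^2/\epsilon^2}\bigr)$, which distributes into the min as $\tilde O\bigl(\Delta_F/\epsilon^{3/2}\cdot(L\sqrt{n}/\sqrt{\epsilon}\land LM/\epsilon^{3/2})\bigr)$. Taking the minimum with the trivial full-batch total $Tn=\tilde O(\sqrt{\rho}n\Delta_F/\epsilon^{3/2})$ and separating the unavoidable initial-iteration cost $\tilde O(n\land M^2/\epsilon^2)$ yields the first two grouped terms of \eqref{free_total_count_1}.

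Next I would count stochastic Hessian-vector products. At each outer iteration $t$ the subsolver $\namesub$ is applied to the cubic model $m_t$, and every subsolver step accesses $\Ub_t[\cdot]$ and therefore costs $B_t^{(h)}=\tilde O(n\land L^2/(\rho\epsilon))$ stochastic Hessian-vector products via \eqref{HessianVariance-3}. Invoking the subsolver guarantees of $\namesub$ and $\namefinal$ from \cite{Carmon2016Gradient,tripuraneni2018stochastic} with the prescribed tolerance $1/(16L)$ and minimum step size $\sqrt{\epsilon/\rho}$ (formalized in the informal Lemma hinted at in the text), each subsolver call terminates after $\tilde O(L/\sqrt{\rho\epsilon})$ inner iterations w.h.p.; since $\namefinal$ is invoked at most once, its contribution is an additional single $\tilde O(L/\sqrt{\rho\epsilon})$ overhead. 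Multiplying through gives a total of $\tilde O\bigl((T\cdot L/\sqrt{\rho\epsilon} + L/\sqrt{\rho\epsilon})(n\land L^2/(\rho\epsilon))\bigr) = \tilde O\bigl((L\Delta_F/\epsilon^2 + L/\sqrt{\rho\epsilon})(n\land L^2/(\rho\epsilon))\bigr)$, matching the last grouped term of \eqref{free_total_count_1}. A union bound over at most $T$ subsolver invocations, each failing with probability $\xi/(3T)$, absorbs into the overall high-probability guarantee.

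The principal obstacle I expect is establishing $\sum_t\|\hb_t\|_2^3 = O(\Delta_F/\rho)$ despite $\hb_t$ being only an approximate minimizer of $m_t$ built from recursive SPIDER-type estimators $\vb_t,\Ub_t$ rather than exact gradients and Hessians; this requires the descent inequality hidden inside the proof of Theorem \ref{thm:2} to be robust to both approximation and variance error, which the batch-size conditions \eqref{HessianVariance-3}--\eqref{gradientVariance-4} are precisely calibrated to ensure. A secondary, lighter obstacle is verifying that the inner-iteration count $\tilde O(L/\sqrt{\rho\epsilon})$ of $\namesub$ remains valid in the stochastic-Hessian setting of $\namefree$; this relies on spectral-norm concentration of $\Ub_t$ around $\HF(\xb_t)$ via matrix Bernstein applied with batch size \eqref{HessianVariance-3}, ensuring $m_t$ is sufficiently smooth for the gradient-based solver of \cite{Carmon2016Gradient}.
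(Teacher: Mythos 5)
Your proposal matches the paper's own argument: you bound $\sum_t\|\hb_t\|_2^3$ via the descent inequality from the proof of Theorem~\ref{thm:2}, apply H\"older's inequality to get $\sum_t\|\hb_t\|_2^2=\tilde O(\Delta_F\rho^{-1/2}\epsilon^{-1/2})$, split the gradient cost into reset and update contributions balanced by the choice of $S^{(g)}$, and multiply the $\namesub$ and $\namefinal$ inner-iteration counts by the Hessian-vector-product batch size $B_t^{(h)}$, which is exactly how the paper arrives at \eqref{free_total_count_1}. The only cosmetic difference is that you invoke matrix Bernstein for the Hessian concentration whereas the paper uses a matrix Azuma inequality, but this does not change the argument.
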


\begin{remark}
For $\namefree$, if we assume $\rho, L, M, \Delta_F$ are constants, then \eqref{free_total_count_1} is $\tilde O(n\epsilon^{-2} \land \epsilon^{-3})$. 
For stochastic algorithms, the regime $n \rightarrow \infty$ is of most interest. In this regime, \eqref{free_total_count_1} becomes $\tilde O(\epsilon^{-3})$. Compared with other local minimum finding algorithms based on stochastic gradient and Hessian-vector product, $\namefree$ outperforms the results achieved by \citet{tripuraneni2018stochastic} and \citet{allen2018natasha2} by a factor of $\epsilon^{-1/2}$. $\namefree$ also matches the best-known result achieved by a recent first-order algorithm proposed in \citep{fang2018spider}. Note that the algorithm proposed by \citet{fang2018spider} needs to alternate the first-order finite-sum optimization algorithm SPIDER and negative curvature descent. In sharp contrast, $\namefree$ is a pure cubic regularization type algorithm and does not need to calculate the negative curvature direction.
\end{remark}
\begin{remark}
It is worth noting that both Theorem \ref{thm:2} and Corollary \ref{free_total_count} still hold when Assumption \ref{assumption_3} does not hold, and $\namefree$'s runtime complexity remains the same. The only difference is: without Assumption~\ref{assumption_3}, we need to use full gradient (i.e., $B_t^{(g)} = n$) instead of subsampled gradient at each iteration $t$. 
\end{remark}


\subsection{Discussions on runtime complexity}\label{sec:heavy_vs_free}
We would like to further compare the runtime complexity between $\nameheavy$ and $\namefree$. In specific, $\nameheavy$ needs $O(d)$ time to construct semi-stochastic gradient and $O(d^2)$ time to construct semi-stochastic Hessian. $\nameheavy$ also needs $O(d^w)$ time to solve cubic subproblem $m_t$ for each iteration. Thus, with the fact that the total number of  iterations is $T = O(\epsilon^{-3/2})$ by Corollary \ref{corollary_1}, $\nameheavy$ needs 
\begin{align}
    \tilde O\bigg(d\Big[\frac{n}{\epsilon^{3/2}}\land \frac{1}{\epsilon^3}\Big] + d^2 \Big[n\land\frac{1}{\epsilon} + \frac{\sqrt{n}}{\epsilon^{3/2}} \land \frac{1}{\epsilon^2}\Big] + \frac{d^w}{\epsilon^{3/2}}\bigg)\notag
\end{align}
runtime to find an $(\epsilon, \sqrt{\epsilon})$-approximate local minimum if we regard $M, L, \rho, \Delta_F$ as constants. As we mentioned before, for many machine learning problems, both stochastic gradient and Hessian-vector product computations only need $O(d)$ time, therefore the runtime of $\namefree$ is $\tilde O(dn\epsilon^{-2}\land d\epsilon^{-3})$. We conclude that $\namefree$ outperforms $\nameheavy$ when $d$ is large, which is in accordance with the fact that Hessian-free methods are superior for high dimension machine learning tasks. On the other hand, a careful calculation can show that the runtime of $\nameheavy$  can be less than that of $\namefree$ when $d$ is moderately small. This is also reflected in our experiments in Section \ref{sec:experiment}.


 
\section{Experiments}\label{sec:experiment}
\begin{figure*}[h]
	\begin{center}
		\subfigure[a9a]{\includegraphics[width=0.32\linewidth]{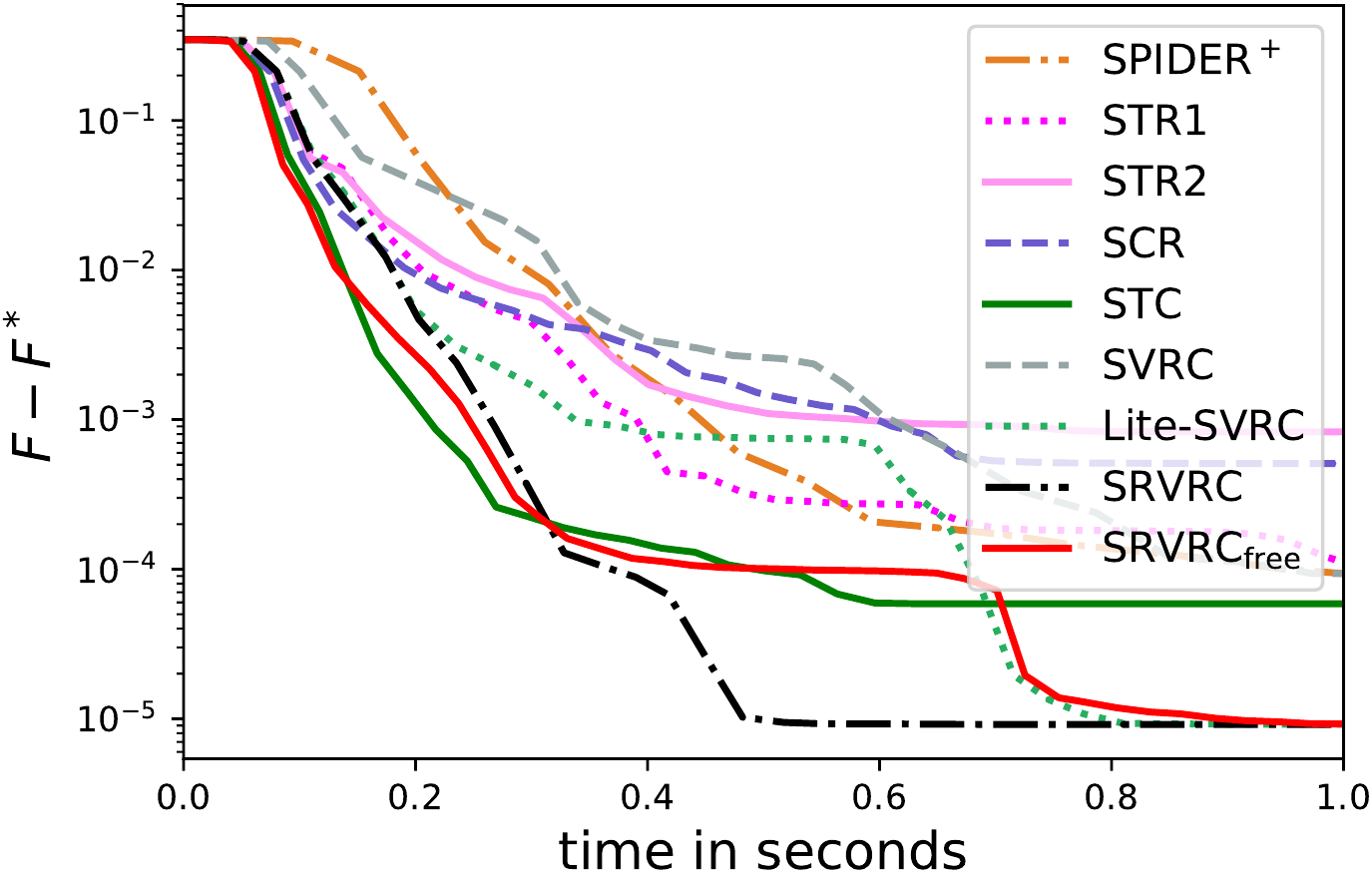}
		\label{fig:a9a}}
		\subfigure[covtype]{\includegraphics[width=0.32\linewidth]{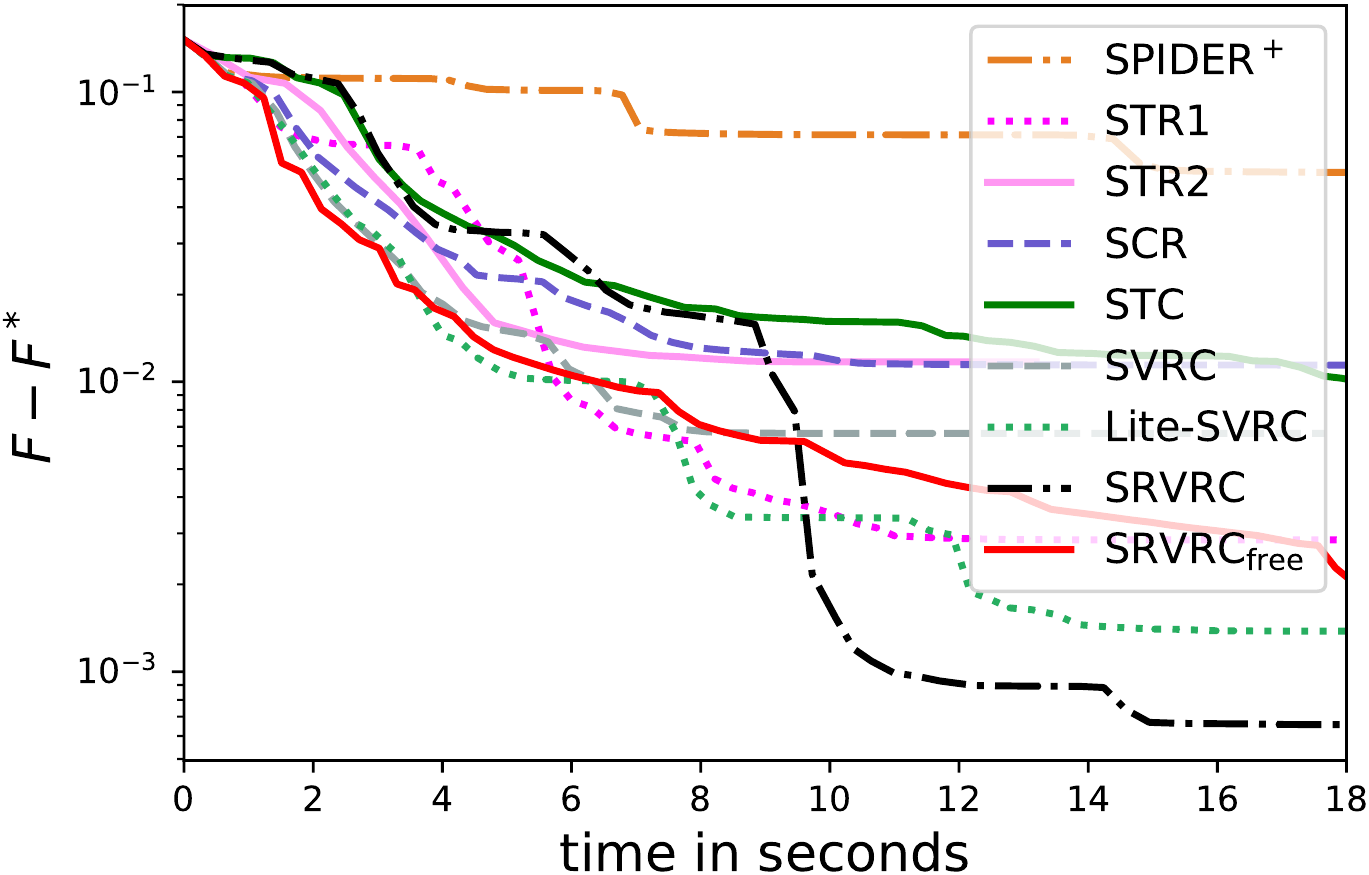}
		\label{fig:covtype}}
		\subfigure[ijcnn1]{\includegraphics[width=0.32\linewidth]{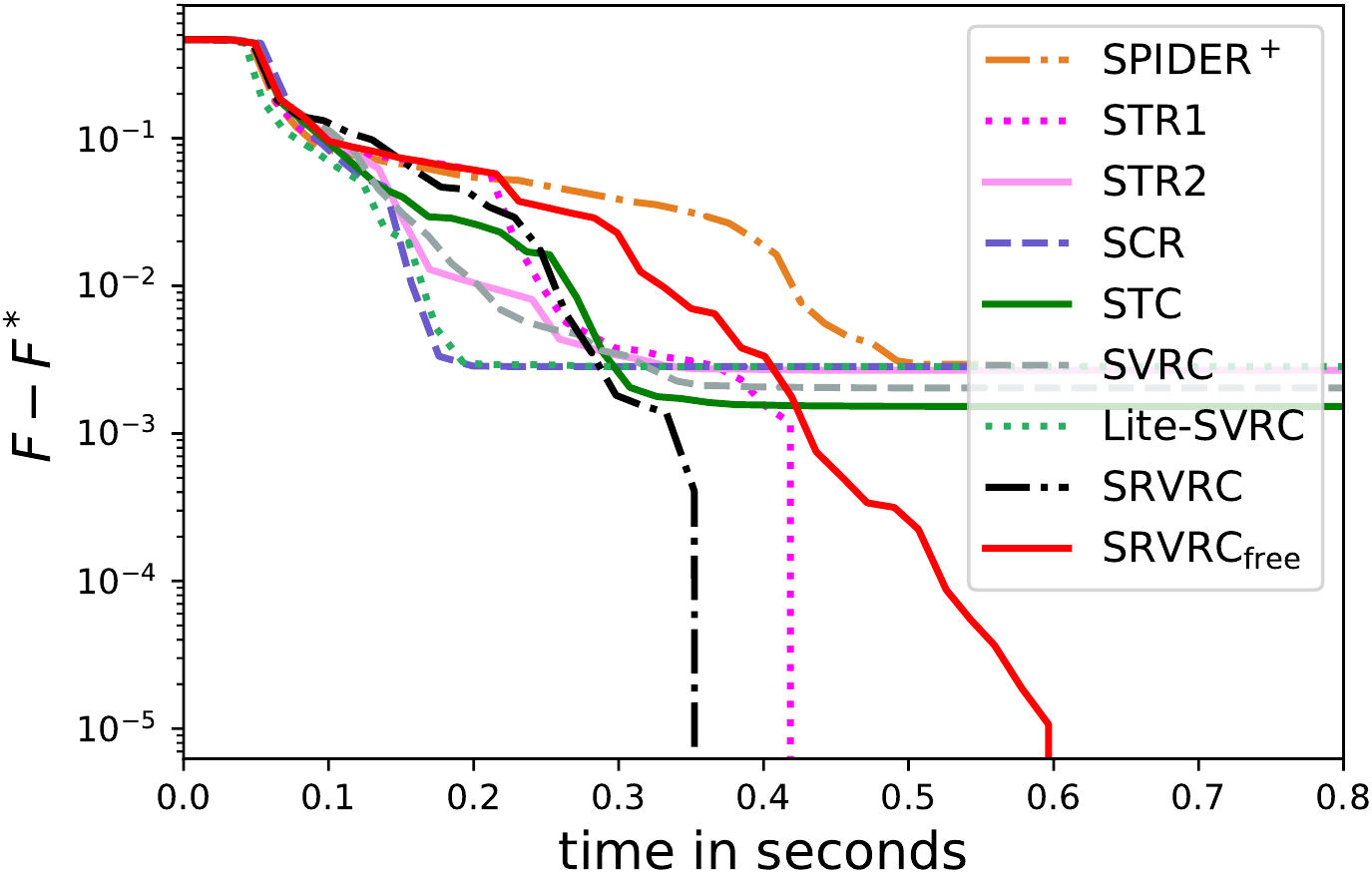}\label{fig:ijcnn1}}
		
		\subfigure[mnist]{\includegraphics[width=0.32\linewidth]{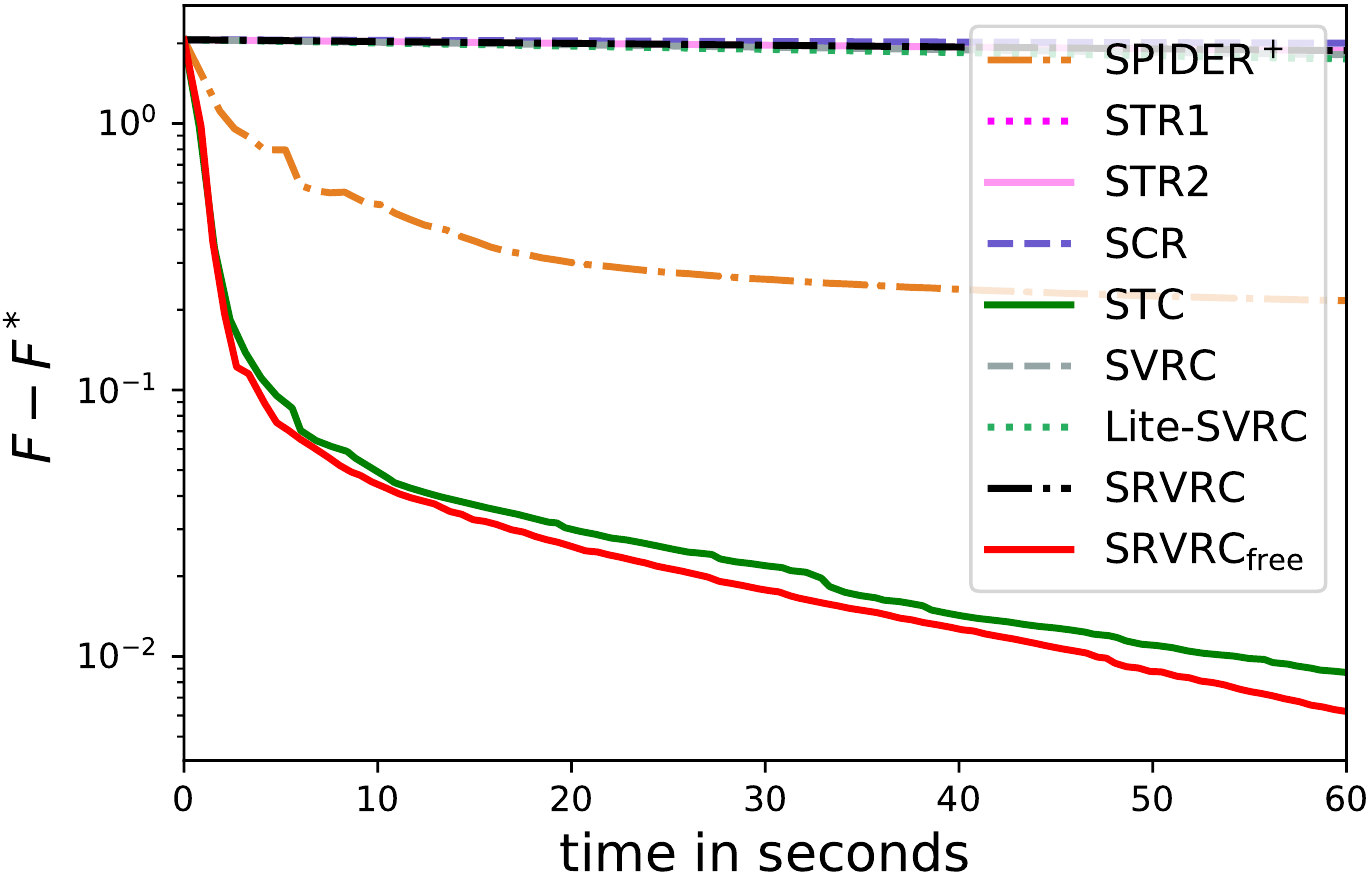}
		\label{fig:mnist}}
		\subfigure[cifar10]{\includegraphics[width=0.32\linewidth]{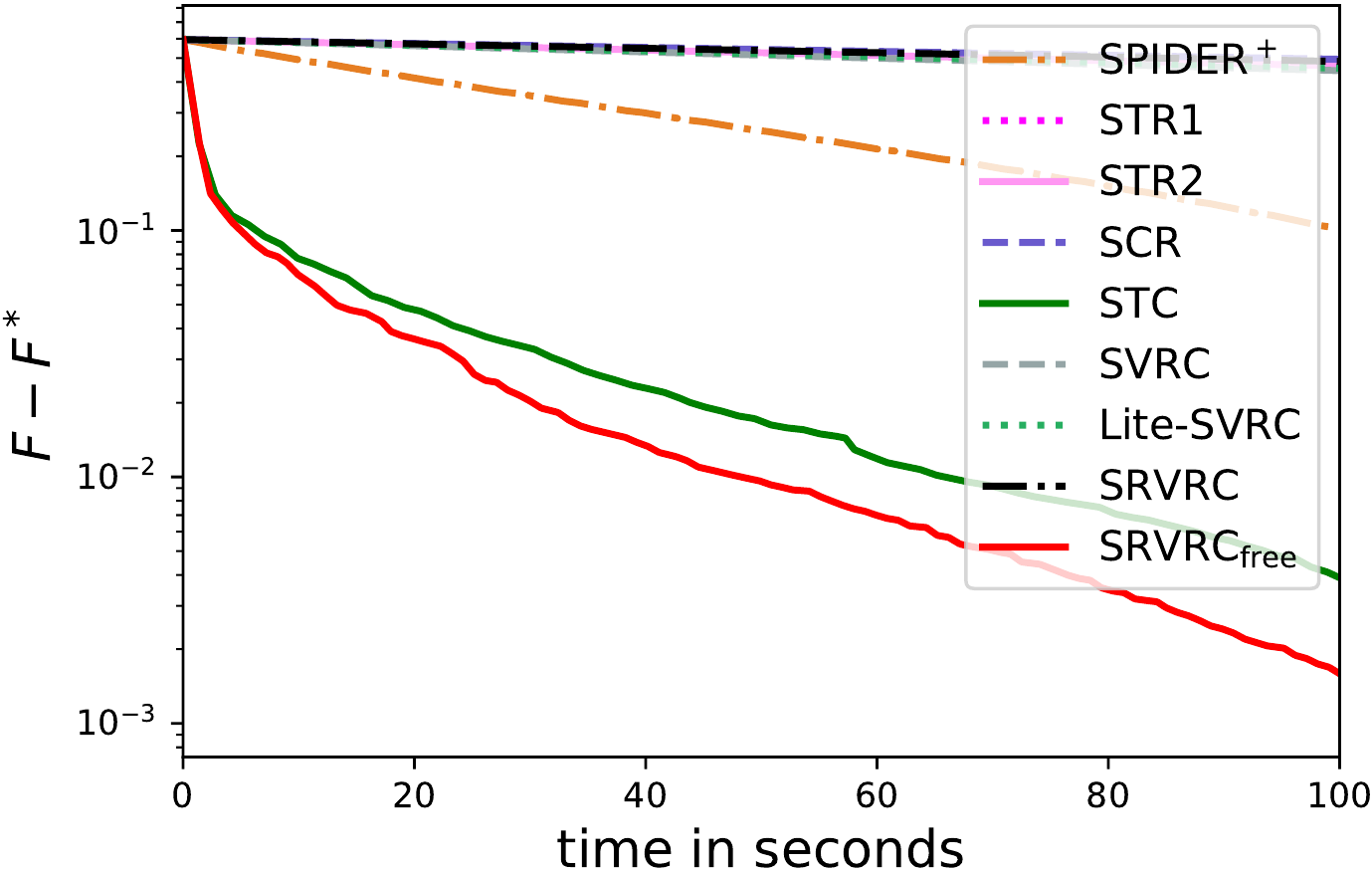}
		\label{fig:cifar10}}
		\subfigure[SVHN]{\includegraphics[width=0.32\linewidth]{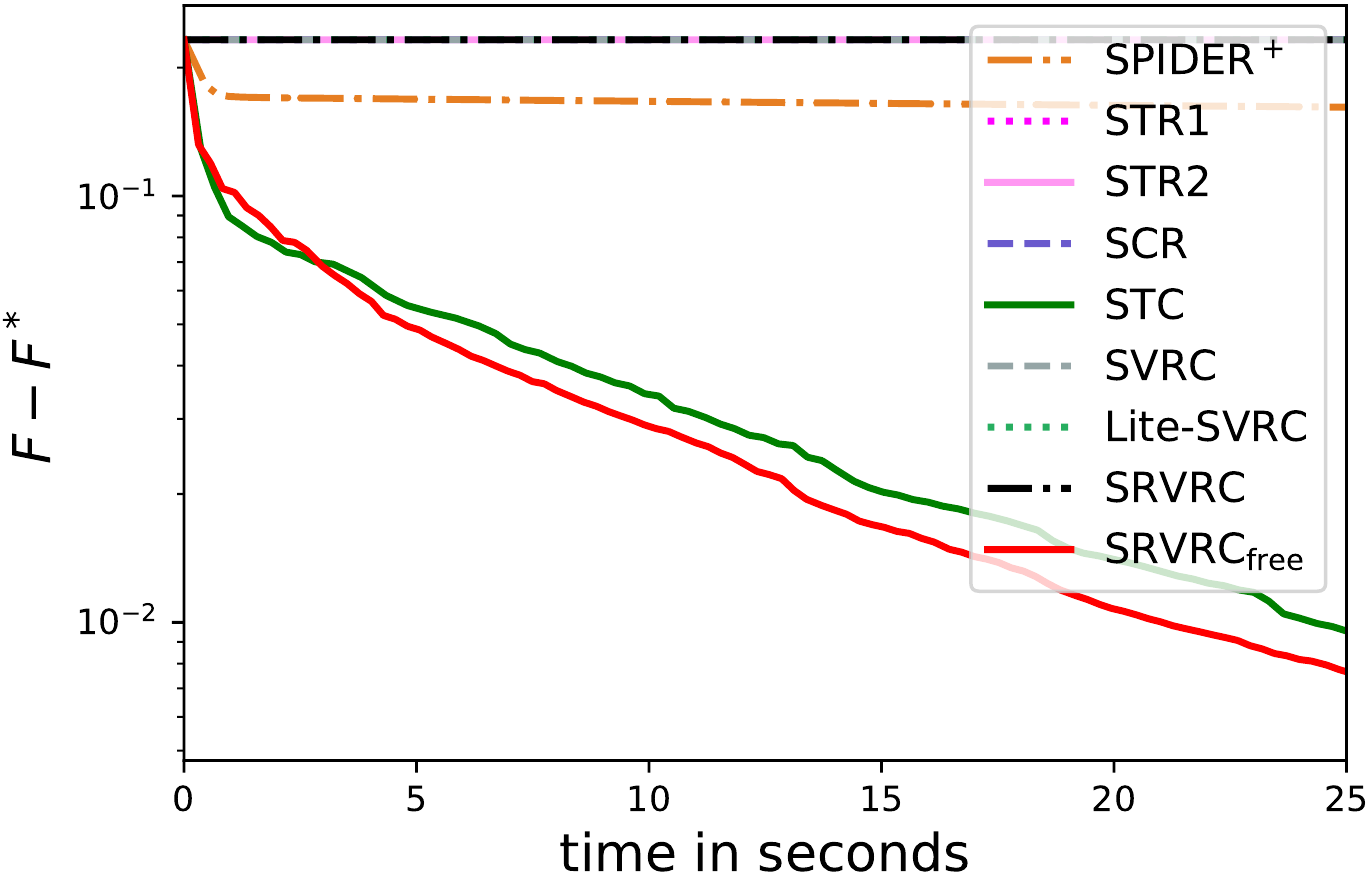}\label{fig:svhn}}
	\caption{Plots of logarithmic function value gap with respect to CPU time (in seconds) for nonconvex regularized binary logistic regression on (a) \emph{a9a} (b) \emph{ovtype} (c) \emph{ijcnn1} and for nonconvex regularized multiclass logistic regression on (d) \emph{mnist} (e) \emph{cifar10} (f) \emph{SVHN}. Best viewed in color.} \label{fig:real_nonconvex}
	\end{center}
\end{figure*}
In this section, we present numerical experiments on different nonconvex empirical risk minimization (ERM) problems and on different datasets to validate the advantage of our proposed $\nameheavy$ and $\namefree$ algorithms for finding approximate local minima.

\noindent\textbf{Baselines:} We compare our algorithms with the following algorithms:
SPIDER+ \citep{fang2018spider}, which is the local minimum finding version of SPIDER, stochastic trust region (STR1, STR2) \citep{shen2019stochastic},
subsampled cubic regularization (SCR) \citep{kohler2017sub}, stochastic cubic regularization (STC) \citep{tripuraneni2018stochastic}, stochastic variance-reduced cubic regularization (SVRC) \citep{zhou2018stochastic}, sample efficient SVRC (Lite-SVRC) \citep{zhou2018sample,wang2018sample,zhang2018adaptive}. 

\noindent\textbf{Parameter Settings and Subproblem Solver}
For each algorithm, we set the cubic penalty parameter $M_t$ adaptively based on how well the model approximates the real objective as suggested in \citep{Cartis2011Adaptive, Cartis2011Adaptive2, kohler2017sub}. For $\nameheavy$, we set $S^{(g)} = S^{(h)} = S$ for the simplicity and set gradient and Hessian batch sizes $B_t^{(g)}$ and $B_t^{(h)}$ as follows:
\begin{align}
    &B_t^{(g)} = B^{(g)}, B_t^{(h)} = B^{(h)}, &\mod(t,S) = 0,\notag \\
    &B_t^{(g)} = \lfloor B^{(g)}/S\rfloor, B_t^{(h)} = \lfloor B^{(h)}/S\rfloor, &\mod(t,S) \neq 0.\notag 
\end{align}
For $\namefree$, we set gradient batch sizes $B_t^{(g)}$ the same as $\nameheavy$ and Hessian batch sizes $B_t^{(h)} = B^{(h)}$. We tune $S$ over the grid $\{5,10,20,50\}$, $B^{(g)}$ over the grid $\{n, n/10, n/20, n/100\}$, and $B^{(h)}$ over the grid $\{50, 100, 500, 1000\}$ for the best performance. For SCR, SVRC, Lite-SVRC, and $\nameheavy$, we solve the cubic subproblem using the cubic subproblem solver discussed in \citep{Nesterov2006Cubic}. For STR1 and STR2, we solve the trust-region subproblem using the exact trust-region subproblem solver discussed in \citep{conn2000trust}. For STC and $\namefree$, we use $\namesub$ (Algorithm \ref{alg:sub} in Appendix \ref{appendix: add_apgorithm}) 
to approximately solve the cubic subproblem. All algorithms are carefully tuned for a fair comparison. 

\noindent\textbf{Datasets and Optimization Problems}
We use 6 datasets \emph{a9a}, \emph{covtype}, \emph{ijcnn1} , \emph{mnist}, \emph{cifar10} and \emph{SVHN} from \citet{chang2011libsvm} . For binary logistic regression problem with a nonconvex regularizer on \emph{a9a}, \emph{covtype}, and \emph{ijcnn1}, we are given training data $\{\xb_i, y_i\}_{i=1}^n$, where $\xb_i \in \RR^d$ and $y_i \in \{0,1\}$ are feature vector and output label corresponding to the $i$-th training example. The nonconvex penalized binary logistic regression is formulated as follows
\begin{align*}
    \min_{\wb \in \RR^d}& \frac{1}{n}\sum_{i=1}^n y_i \log \phi(\xb_i^\top\wb)+(1-y_i)\log[1-\phi(\xb_i^\top\wb)] +\lambda\sum_{i=1}^d  \frac{w_i^2}{1+w_i^2},
\end{align*}
where $\phi(x)$ is the sigmoid function and $\lambda = 10^{-3}$. 
For multiclass logistic regression problem with a nonconvex regularizer on \emph{mnist}, \emph{cifar10} and \emph{SVHN}, we are given training data $\{\xb_i, \yb_i\}_{i=1}^n$, where $\xb_i \in \RR^d$ and $\yb_i \in \RR^m$ are feature vectors and multilabels corresponding to the $i$-th data points. The nonconvex penalized multiclass logistic regression is formulated as follows
\begin{align}
    \min_{\Wb \in \RR^{m\times d}}& -\sum_{i=1}^n\frac{1}{n}\la\yb_i, \log[\text{softmax}(\Wb\xb_i)]\ra+\lambda\sum_{i=1}^m\sum_{j=1}^d{1+w_{i,j}^2},\notag
\end{align}
where $\text{softmax}(\ab)=\exp(\ab)/\sum_{i=1}^d \exp(a_i)$ is the softmax function and $\lambda = 10^{-3}$.

We plot the logarithmic function value gap with respect to CPU time in Figure \ref{fig:real_nonconvex}. From Figure \ref{fig:a9a} to \ref{fig:svhn}, we can see that for the low dimension optimization task on \emph{a9a}, \emph{covtype} and \emph{ijcnn1}, our $\nameheavy$ outperforms all the other algorithms with respect to CPU time. We can also observe that the stochastic trust region method STR1 is better than STR2, which is well-aligned with our discussion before. The SPIDER+ does not perform as well as other second-order methods, even though its stochastic gradient and Hessian complexity is comparable to second-order methods in theory. Meanwhile, we also notice that $\namefree$ always outperforms STC, which suggests that the variance reduction technique is useful. For high dimension optimization task \emph{mnist}, \emph{cifar10} and \emph{SVHN}, only SPIDER+, STC and $\namefree$ are able to make notable progress and $\namefree$ outperforms the other two. This is again consistent with our theory and discussions in Section~\ref{sec:hessfree}. Overall, our experiments clearly validate the advantage of  $\nameheavy$ and $\namefree$, and corroborate the theory of both algorithms.
\section{Conclusions and Future Work}
In this work we present two faster SVRC algorithms namely $\nameheavy$ and $\namefree$ to find approximate local minima for nonconvex finite-sum optimization problems. $\nameheavy$ outperforms existing SVRC algorithms in terms of gradient and Hessian complexities, while $\namefree$ further outperforms the best-known runtime complexity for existing CR based algorithms. Whether our algorithms have achieved the optimal complexity under the current assumptions is still an open problem, and we leave it as a future work.

\appendix

\section{Proofs in Section \ref{sec:SVRC}}\label{app_a}
 We define the filtration $\cF_t = \sigma (\xb_0,...,\xb_t)$ as the $\sigma$-algebra of $\xb_0$ to $\xb_t$. Recall that $\vb_t$ and $\Ub_t$ are the semi-stochastic gradient and Hessian respectively, $\hb_t$ is the update parameter, and $M_t$ is the cubic penalty parameter appeared in Algorithm \ref{algorithm:1} and Algorithm \ref{algorithm:2}. We denote $m_t(\hb): = \vb^\top\hb + \hb^\top\Ub_t\hb/2 + M_t\|\hb\|_2^3/6$ and $\hb_t^* = \argmin_{\hb \in \RR^d}m_t(\hb)$. In this section, we define $\delta = \xi/(2T)$ for the simplicity.

\subsection{Proof of Theorem \ref{Theorem1}}
To prove Theorem \ref{Theorem1}, we need the following lemma adapted from \citet{zhou2018stochastic}, which characterizes that $\mu(\xb_t+\hb)$ can be bounded by $\|\hb\|_2$ and the norm of difference between semi-stochastic gradient and Hessian.


\begin{lemma}\label{mu_and_h}
Suppose that $m_t(\hb):=\vb_t^\top\hb + \hb^\top\Ub_t\hb/2 + M_t\|\hb\|_2^3/6$ and $\hb_t^* = \argmin_{\hb \in \RR^d}m_t(\hb)$. If $M_{t}/\rho\geq 2$, then for any $\hb \in \RR^d$, we have
\begin{align*}
\mu(\xb_t+\hb)
& \leq
 9 \Big[M_{t}^{3}\rho^{-3/2}\|\hb\|_2^3 + M_{t}^{3/2}\rho^{-3/2} \big\|\dF(\xb_t) - \vb_t\big\|_2^{3/2}+ \rho^{-3/2} \big\|\HF(\xb_t) - \Ub_t\big\|_2^3 \notag \\
 &\qquad+ M_t^{3/2}\rho^{-3/2}\|\nabla m_t(\hb)\|_2^{3/2} + M_t^3\rho^{-3/2}\big|\|\hb\|_2 - \|\hb_t^*\|_2 \big|^3\Big].
 \end{align*}
\end{lemma}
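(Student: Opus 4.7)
\textbf{Proof plan for Lemma \ref{mu_and_h}.} Recalling $\mu(\xb)=\max\{\|\dF(\xb)\|_2^{3/2},\,-\mineig^3(\HF(\xb))/\rho^{3/2}\}$, it suffices to upper bound the two quantities $\|\dF(\xb_t+\hb)\|_2^{3/2}$ and $[-\mineig(\HF(\xb_t+\hb))]_+^3/\rho^{3/2}$ separately by the right-hand side, and then take the max (equivalently, sum) of the two bounds.

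\textbf{Step 1: the gradient term.} Using the integral remainder form of Taylor's theorem together with the $\rho$-Hessian Lipschitz assumption, I obtain $\|\dF(\xb_t+\hb)-\dF(\xb_t)-\HF(\xb_t)\hb\|_2\le(\rho/2)\|\hb\|_2^2$. Then I substitute $\dF(\xb_t)=\vb_t+[\dF(\xb_t)-\vb_t]$ and $\HF(\xb_t)=\Ub_t+[\HF(\xb_t)-\Ub_t]$, and use the identity $\vb_t+\Ub_t\hb=\nabla m_t(\hb)-(M_t/2)\|\hb\|_2\hb$. The triangle inequality gives
\begin{align*}
\|\dF(\xb_t+\hb)\|_2\le \tfrac{\rho}{2}\|\hb\|_2^2+\tfrac{M_t}{2}\|\hb\|_2^2+\|\dF(\xb_t)-\vb_t\|_2+\|\HF(\xb_t)-\Ub_t\|_2\|\hb\|_2+\|\nabla m_t(\hb)\|_2.
\end{align*}
Raising to the $3/2$ power and using $(\sum_i a_i)^{3/2}\le\sqrt{5}\sum_i a_i^{3/2}$, each term becomes one of the target terms after the following adjustments: the quadratic-in-$\hb$ pieces become $M_t^{3/2}\|\hb\|_2^3$ and $\rho^{3/2}\|\hb\|_2^3$, both of which are absorbed into $M_t^3\rho^{-3/2}\|\hb\|_2^3$ because $M_t\ge2\rho$ implies $M_t^{3/2}\le M_t^3\rho^{-3/2}/2^{3/2}$ and $\rho^{3/2}\le M_t^3\rho^{-3/2}$. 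Similarly, $\|\dF(\xb_t)-\vb_t\|_2^{3/2}$ and $\|\nabla m_t(\hb)\|_2^{3/2}$ are absorbed into $M_t^{3/2}\rho^{-3/2}$-prefactored terms using $M_t^{3/2}\rho^{-3/2}\ge2^{3/2}$. The genuine cross term $\|\HF(\xb_t)-\Ub_t\|_2\|\hb\|_2$ is split by the weighted Young inequality $ab\le a^2/(2c)+cb^2/2$ with $c=M_t^2/\rho$, so that $(ab)^{3/2}\lesssim \rho^{3/2}M_t^{-3}\|\HF(\xb_t)-\Ub_t\|_2^3+M_t^3\rho^{-3/2}\|\hb\|_2^3$; since $M_t\ge\rho$, the first piece is further bounded by $\rho^{-3/2}\|\HF(\xb_t)-\Ub_t\|_2^3$.

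\textbf{Step 2: the eigenvalue term.} By Weyl's inequality and $\rho$-Hessian Lipschitzness,
\begin{align*}
\mineig(\HF(\xb_t+\hb))\ge\mineig(\HF(\xb_t))-\rho\|\hb\|_2\ge\mineig(\Ub_t)-\|\HF(\xb_t)-\Ub_t\|_2-\rho\|\hb\|_2.
\end{align*}
Nesterov--Polyak's global optimality condition for the cubic subproblem (applied to $m_t$) gives $\Ub_t+(M_t/2)\|\hb_t^\ast\|_2\Ib\succeq0$, so $\mineig(\Ub_t)\ge-(M_t/2)\|\hb_t^\ast\|_2$. Therefore $-\mineig(\HF(\xb_t+\hb))\le (M_t/2)\|\hb_t^\ast\|_2+\|\HF(\xb_t)-\Ub_t\|_2+\rho\|\hb\|_2$, and cubing using $(a+b+c)^3\le 9(a^3+b^3+c^3)$ yields after dividing by $\rho^{3/2}$ a linear combination of $M_t^3\|\hb_t^\ast\|_2^3/\rho^{3/2}$, $\rho^{-3/2}\|\HF(\xb_t)-\Ub_t\|_2^3$, and $\rho^{3/2}\|\hb\|_2^3$, the last of which is again dominated by $M_t^3\rho^{-3/2}\|\hb\|_2^3$.

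\textbf{Step 3: swap $\hb_t^\ast$ for $\hb$.} The one remaining piece involves $\|\hb_t^\ast\|_2$, which the lemma wants expressed through $\|\hb\|_2$ and $\big|\|\hb\|_2-\|\hb_t^\ast\|_2\big|$. Apply $\|\hb_t^\ast\|_2\le\|\hb\|_2+\big|\|\hb\|_2-\|\hb_t^\ast\|_2\big|$ and the elementary inequality $(a+b)^3\le4(a^3+b^3)$ to produce the last two summands of the bound. Adding the bounds from Steps~1 and~2 and absorbing all numerical factors into a single constant gives the stated inequality.

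\textbf{Main obstacle.} The conceptual steps are routine; the bookkeeping obstacle is choosing the Young-inequality weight $c=M_t^2/\rho$ in Step~1 so that both halves of the cross term land in targets present on the right-hand side, and verifying that each auxiliary bound can be re-expressed with the prefactors $M_t^3\rho^{-3/2}$, $M_t^{3/2}\rho^{-3/2}$, or $\rho^{-3/2}$ offered by the statement. The condition $M_t/\rho\ge2$ is used exactly to enable these absorptions and to make the constants uniform so they collapse to the single factor $9$ in the final inequality.
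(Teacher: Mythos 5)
Your proof is correct and follows essentially the same route as the paper: bound $\|\dF(\xb_t+\hb)\|_2$ and $-\mineig(\HF(\xb_t+\hb))$ linearly, raise to the $3/2$ and $3$ powers respectively via the power-mean inequality, and absorb constants using $M_t\ge 2\rho$. The only difference is that the paper simply cites the two linear bounds as Lemmas \ref{lemma:rrf1} and \ref{lemma:rrf2} from \citet{zhou2018stochastic}, whereas you re-derive them inline (your Step~1 reproduces \ref{lemma:rrf1} with the $\|\HF(\xb_t)-\Ub_t\|_2\|\hb\|_2$ cross term Young'd after the $3/2$-power rather than before, and your Steps~2--3 reproduce \ref{lemma:rrf2} via Weyl plus Nesterov--Polyak optimality plus the substitution $\|\hb_t^*\|_2\le\|\hb\|_2+\big|\|\hb\|_2-\|\hb_t^*\|_2\big|$); both orderings land on the same target terms and the absorptions work out to the stated constant $9$.
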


Next lemma gives bounds on the inner products $\la\dF(\xb_t) - \vb_t,\hb\ra$ and $\la\big(\nabla^2F(\xb_t) -\Ub_t\big)\hb,\hb\ra$. 
\begin{lemma}\label{lemma:youngs}
For any $\hb \in \RR^d$, we have 
\begin{align}
\la\dF(\xb_t) - \vb_t,\hb\ra & \leq \frac{\rho}{8} \|\hb\|_2^3 + \frac{6 \|\dF(\xb_t) - \vb_t\|_2^{3/2}}{5\sqrt{\rho}},\notag\\
    \big\la\big(\nabla^2F(\xb_t) - \Ub_t\big)\hb,\hb\big\ra &\leq\frac{\rho}{8} \|\hb\|_2^3 + \frac{10}{\rho^2} \big\|\nabla^2F(\xb_t) - \Ub_t\big\|_2^3.\notag
\end{align}
\end{lemma}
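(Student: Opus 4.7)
The plan is to establish each bound independently via a straightforward Cauchy--Schwarz followed by a weighted Young's inequality of the appropriate form. Neither step requires information beyond the definitions of $\vb_t$, $\Ub_t$, and $\hb$; the statement is essentially a decoupling inequality, and the only care needed is in choosing the Young's-inequality weights so that the constant multiplying $\|\hb\|_2^3$ on the right-hand side is exactly $\rho/8$.

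For the first inequality, I would start by writing
\begin{align*}
\la \dF(\xb_t) - \vb_t, \hb\ra \leq \|\dF(\xb_t) - \vb_t\|_2 \cdot \|\hb\|_2
\end{align*}
by Cauchy--Schwarz. I would then apply Young's inequality in the form $ab \leq \lambda^3 a^3/3 + 2 b^{3/2}/(3\lambda^{3/2})$ (i.e., with conjugate exponents $p=3$, $q=3/2$, and a scaling parameter $\lambda > 0$), taking $a = \|\hb\|_2$ and $b = \|\dF(\xb_t) - \vb_t\|_2$. Setting $\lambda$ so that $\lambda^3/3 = \rho/8$, i.e.\ $\lambda = (3\rho/8)^{1/3}$, the coefficient of $\|\dF(\xb_t) - \vb_t\|_2^{3/2}$ becomes $2/(3\lambda^{3/2}) = (4/3)\sqrt{2/(3\rho)} = (4\sqrt{6}/9)\rho^{-1/2}$, which is bounded above by $6/(5\sqrt{\rho})$ since $4\sqrt{6}/9 \approx 1.089 < 1.2 = 6/5$. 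This gives the first claimed bound.

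For the second inequality, the same template applies with the roles of the exponents swapped. Cauchy--Schwarz (or more precisely the operator-norm bound) yields
\begin{align*}
\big\la \big(\HF(\xb_t) - \Ub_t\big)\hb, \hb\big\ra \leq \|\HF(\xb_t) - \Ub_t\|_2 \cdot \|\hb\|_2^2.
\end{align*}
Now I would apply Young's inequality with $p = 3/2, q = 3$, in the scaled form $ab \leq 2\lambda^{3/2} a^{3/2}/3 + b^3/(3\lambda^3)$, taking $a = \|\hb\|_2^2$ (so that $a^{3/2} = \|\hb\|_2^3$) and $b = \|\HF(\xb_t) - \Ub_t\|_2$. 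Choosing $\lambda$ so that $2\lambda^{3/2}/3 = \rho/8$ gives $\lambda^{3/2} = 3\rho/16$, hence $\lambda^3 = 9\rho^2/256$, and the coefficient of $\|\HF(\xb_t) - \Ub_t\|_2^3$ becomes $1/(3\lambda^3) = 256/(27\rho^2) < 10/\rho^2$, yielding the second claimed bound.

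There is no real obstacle here; the lemma is a pair of applications of Cauchy--Schwarz plus scaled Young's inequality, and the only subtlety is selecting the Young weight $\lambda$ in each case so the prescribed $\rho/8$ coefficient is matched exactly while the second coefficient is bounded by the claimed numerical constant ($6/5$ and $10$ respectively). Both verifications reduce to checking a single numerical inequality, which goes through with room to spare.
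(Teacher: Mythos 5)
Your proof is correct and follows essentially the same route as the paper: Cauchy--Schwarz (operator-norm bound in the Hessian case) followed by a weighted Young's inequality with conjugate exponents $(3,3/2)$, then checking that the resulting numerical constants are below $6/5$ and $10$ respectively. The paper states the same two steps without spelling out the choice of Young scaling, so your verification merely makes the bookkeeping explicit.
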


We also need the  following two lemmas, which show that semi-stochastic gradient and Hessian $\vb_t$ and $\Ub_t$ estimators are good approximations to true gradient and Hessian.

\begin{lemma}\label{gradientVariance}
Suppose that $\{B_k^{(g)}\}$ satisfies \eqref{gradientVariance-1} and \eqref{gradientVariance-2}, then conditioned on $\cF_{\lfloor t/S^{(g)} \rfloor\cdot S^{(g)}}$, with probability at least $1-\delta\cdot(t - \lfloor t/S^{(g)}\rfloor\cdot S^{(g)})$, we have that for all $\lfloor t/S^{(g)} \rfloor\cdot S^{(g)} \leq k \leq t$,
\begin{align}
    \|\dF(\xb_k) - \vb_k\|_2^2 \leq \frac{\epsilon^2}{30}.
\end{align}
\end{lemma}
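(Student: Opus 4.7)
Let me set $t_0 := \lfloor t/S^{(g)}\rfloor \cdot S^{(g)}$ and write the gradient error as $\zb_k := \dF(\xb_k) - \vb_k$. The plan is to decompose $\zb_k = \zb_{t_0} + \sum_{j=t_0+1}^{k} \ab_j$ with $\ab_j := \zb_j - \zb_{j-1}$, bound the two pieces separately with high probability, and then union-bound over $k \in [t_0,t]$.

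For the epoch-start term, $\vb_{t_0} = \df_{\cJ_{t_0}}(\xb_{t_0})$ is a sample average of $B_{t_0}^{(g)}$ i.i.d.\ centered vectors each of norm at most $\upp$ by Assumption~\ref{assumption_3}, so a vector Hoeffding/Bernstein inequality combined with the lower bound~\eqref{gradientVariance-2} on $B_{t_0}^{(g)}$ yields $\|\zb_{t_0}\|_2 \leq \epsilon/\sqrt{120}$ with probability at least $1-\delta$ (trivially $\zb_{t_0}=0$ when the $n$ in the min is binding). For the recursive increments, unwinding the definition of $\vb_j$ gives
\begin{align*}
\ab_j \;=\; \bigl[\dF(\xb_j) - \dF(\xb_{j-1})\bigr] \;-\; \frac{1}{B_j^{(g)}}\sum_{i\in\cJ_j}\bigl[\df_i(\xb_j) - \df_i(\xb_{j-1})\bigr],
\end{align*}
which, conditional on $\cF_j$ and the sample sets used up to time $j-1$, is the mean of $B_j^{(g)}$ i.i.d.\ zero-mean vectors of norm at most $2L\|\hb_{j-1}\|_2$ by the $L$-gradient-Lipschitz property of each $f_i$ (Assumption~\ref{assumption_2}). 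Vector Hoeffding together with the adaptive batch size~\eqref{gradientVariance-1} then produces a per-step bound of the form $\|\ab_j\|_2 \leq C\epsilon/\sqrt{S^{(g)}\log(2T/\xi)}$ with probability $\geq 1-\delta$; the $\log^2(2T/\xi)$ factor in the batch size is calibrated so that one residual $\sqrt{\log}$ is left over for the next step.

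Because $\{\ab_j\}_{j>t_0}$ is a vector-valued martingale-difference sequence with respect to the natural filtration, I then invoke a Pinelis-type vector Azuma inequality on the partial sum $\sum_{j=t_0+1}^{k}\ab_j$: on the intersection of the per-step good events the increments are bounded a.s., the variance proxy telescopes to at most $(k-t_0)\cdot C^2\epsilon^2/(S^{(g)}\log(2T/\xi)) \leq C^2\epsilon^2/\log(2T/\xi)$ using $k-t_0 \leq S^{(g)}$, and one obtains $\|\sum_{j=t_0+1}^{k}\ab_j\|_2 \leq \epsilon/\sqrt{120}$ with probability $\geq 1-\delta$ for each fixed $k$. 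Combining with the epoch-start bound via the triangle inequality gives $\|\zb_k\|_2 \leq 2\epsilon/\sqrt{120}$, hence $\|\zb_k\|_2^2 \leq \epsilon^2/30$. A union bound over the at most $t-t_0$ indices $k \in (t_0,t]$ (absorbing the epoch-start event into the constants) yields the stated failure probability $\delta\cdot(t-t_0)$.

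The principal obstacle is the martingale-concentration step: a naive triangle inequality $\|\sum_j \ab_j\|_2 \leq \sum_j\|\ab_j\|_2$ would accumulate to order $\sqrt{S^{(g)}}\cdot\epsilon$, losing a $\sqrt{S^{(g)}}$ factor and forcing a quadratically larger batch size. Exploiting the martingale-difference structure via a vector Azuma/Pinelis inequality is what makes the $S^{(g)}$-linear scaling in~\eqref{gradientVariance-1} actually suffice. A secondary subtlety is that the per-step increment bound is itself a high-probability event whose scale depends on the random quantity $\|\hb_{j-1}\|_2$; one must therefore apply the vector Azuma inequality on the intersection of the per-step good events rather than treating $\|\ab_j\|_2$ as a deterministically bounded martingale increment.
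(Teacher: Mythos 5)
Your proposal is correct and takes essentially the same route as the paper: decompose $\dF(\xb_k)-\vb_k$ into a martingale-difference sum over the epoch, use a vector Hoeffding bound per increment (what the paper isolates as Lemma~\ref{vhoeff_prac}) with the adaptive batch sizes to control each $\|\ub_j\|_2$, and then apply a Pinelis-type vector Azuma inequality (Lemma~\ref{vazuma}) to the partial sums, union-bounding over $k$. The only cosmetic difference is that you peel off the epoch-start term via the triangle inequality while the paper treats it as the first martingale increment $\ub_{t_0}$, which does not change the argument.
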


\begin{lemma}\label{HessianVariance}
Suppose that $\{B_k^{(h)}\}$ satisfies \eqref{hessianVariance-1} and \eqref{hessianVariance-2}, then conditioned on $\cF_{\lfloor t/S^{(h)} \rfloor\cdot S^{(h)}}$, with probability at least $1-\delta\cdot(t - \lfloor t/S^{(h)}\rfloor\cdot S^{(h)})$, we have that for all $\lfloor t/S^{(h)} \rfloor\cdot S^{(h)} \leq k \leq t$,
\begin{align}
    \|\HF(\xb_k) - \Ub_k\|_2^2 \leq \frac{\rho\epsilon}{20}.
\end{align}
\end{lemma}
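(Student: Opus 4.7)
The strategy is to write the Hessian estimator error as the sum of an initial ``refreshed'' term plus a matrix martingale, then control the first piece by matrix Bernstein concentration and the second by a matrix Freedman-type inequality.

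Let $k_0 := \lfloor t/S^{(h)}\rfloor\cdot S^{(h)}$. Unrolling the recursion \eqref{def_uu} gives, for every $k_0\le k\le t$,
\begin{align*}
\Ub_k - \HF(\xb_k) &= \underbrace{\Hf_{\cI_{k_0}}(\xb_{k_0}) - \HF(\xb_{k_0})}_{=:\, E_0} \;+\; \sum_{j=k_0+1}^{k} M_j,\\
M_j &:= \Hf_{\cI_j}(\xb_j) - \Hf_{\cI_j}(\xb_{j-1}) - \bigl(\HF(\xb_j) - \HF(\xb_{j-1})\bigr).
\end{align*}
Conditional on $\cF_{k_0}$ and the previously drawn sample sets, each $M_j$ is a mean-zero symmetric matrix, and $\{M_j\}_{j>k_0}$ is a matrix martingale-difference sequence with respect to the growing filtration.

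\textbf{First piece.} By the gradient-Lipschitz part of Assumption~\ref{assumption_2}, $\|\Hf_i(\xb_{k_0})-\HF(\xb_{k_0})\|_2\le 2L$, and \eqref{hessianVariance-2} gives $B^{(h)}_{k_0}\ge 800L^2\log^2(2Td/\xi)/(\rho\epsilon)$. I will apply the matrix Bernstein (or Hoeffding) inequality to the i.i.d.\ averaged sum defining $E_0$ to conclude $\|E_0\|_2^2\le \rho\epsilon/c_1$ with probability at least $1-\delta$, where the sample-size lower bound was tailored precisely so that a $\log^2(2Td/\xi)$ factor appears in the tail exponent.

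\textbf{Second piece.} Conditional on the past, $M_j$ is the sample mean of $B^{(h)}_j$ i.i.d.\ centered symmetric matrices $Y_i := \Hf_i(\xb_j)-\Hf_i(\xb_{j-1}) - (\HF(\xb_j)-\HF(\xb_{j-1}))$, each of operator norm at most $2\rho\|\hb_{j-1}\|_2$ by Hessian-Lipschitzness and $\xb_j-\xb_{j-1}=\hb_{j-1}$. Thus
\begin{align*}
\bigl\|\EE[M_j M_j^{\top}\mid\cF_{j-1}]\bigr\|_2 \;\le\; \frac{4\rho^2\|\hb_{j-1}\|_2^2}{B^{(h)}_j} \;\le\; \frac{\rho\epsilon}{200\, S^{(h)}\log^2(2Td/\xi)},
\end{align*}
using \eqref{hessianVariance-1} in the second inequality. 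Summing over the at most $S^{(h)}$ active indices bounds the predictable quadratic variation of $\sum_j M_j$ by $\rho\epsilon/(200\log^2(2Td/\xi))$. Applying matrix Freedman with target deviation $\tau^2=\rho\epsilon/c_2$ then makes the Bernstein exponent a multiple of $\log^2(2Td/\xi)$, so the per-$k$ failure probability is at most $\delta$. A union bound over $k\in(k_0,t]$ gives $\|\sum_{j=k_0+1}^k M_j\|_2^2\le \rho\epsilon/c_2$ simultaneously for all such $k$ with probability at least $1-\delta(t-k_0)$. Combining with the first piece through $(a+b)^2\le 2a^2+2b^2$ and choosing $c_1,c_2$ large enough yields $\|\Ub_k-\HF(\xb_k)\|_2^2\le \rho\epsilon/20$, which is the claim.

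\textbf{Main obstacle.} The crucial technical step is avoiding a factor-$S^{(h)}$ loss in aggregating the $M_j$'s: a triangle-inequality-plus-Cauchy--Schwarz bound would give $\|\sum_j M_j\|_2^2\le S^{(h)}\sum_j\|M_j\|_2^2$, which for the chosen sample size is too large by exactly $S^{(h)}$, so one \emph{must} exploit the matrix-martingale structure via Freedman's inequality rather than bounding each $\|M_j\|_2$ separately. A secondary subtlety is that the a.s.\ bound $2\rho\|\hb_{j-1}\|_2$ on $\|M_j\|_2$ is random; this is handled either by verifying that, for our target $\tau^2\asymp\rho\epsilon$, the Bernstein linear-in-$\tau$ term is dominated by the variance term, or by first conditioning on a uniform bound on $\|\hb_{j-1}\|_2$ that follows from first-order optimality of the cubic subproblem together with Assumptions~\ref{assumption_2} and~\ref{assumption_3}.
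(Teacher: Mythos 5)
Your decomposition of $\Ub_k-\HF(\xb_k)$ into the refreshed term plus a matrix martingale is exactly the paper's, and your handling of the refreshed term matches the paper's. You diverge on the martingale part: the paper first derives a \emph{high-probability} bound on each individual increment, $\|\Vb_j\|_2\le\sqrt{\rho\epsilon/(360S^{(h)}\log(d/\delta))}$, by applying matrix concentration to the i.i.d.\ samples inside the batch $\cI_j$ (Lemma~\ref{hoeff_prac}), and then feeds these per-increment bounds into the matrix Azuma inequality for the outer martingale; you instead propose a single matrix Freedman application driven by the predictable quadratic variation. Your ``main obstacle'' diagnosis is also off: what loses the factor $S^{(h)}$ is the triangle inequality, not the act of bounding each increment separately --- Azuma with per-increment bounds aggregates as $\sqrt{\sum_j A_j^2}$ and recovers the same rate as a variance-based bound here.

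The genuine gap is in your Freedman step. Matrix Freedman needs, besides the variance bound, an almost-sure bound $R$ on the increments, entering the tail as $\exp\bigl(-\tau^2/(2\sigma^2+2R\tau/3)\bigr)$. At the batch level the only available a.s.\ bound is $\|M_j\|_2\le 2\rho\|\hb_{j-1}\|_2$, and since the algorithm has not yet triggered its stopping rule you have $\|\hb_{j-1}\|_2>\sqrt{\epsilon/\rho}$, hence $R\ge 2\sqrt{\rho\epsilon}$ (and $R$ can be as large as $O(L)$). With $\tau^2=\rho\epsilon/c_2$ the linear term $R\tau/3\gtrsim\rho\epsilon/\sqrt{c_2}$ dominates both $\tau^2$ and your variance bound $\rho\epsilon/(200\log^2(2Td/\xi))$, so the exponent is $O(1)$ and the claimed per-$k$ failure probability $\delta$ does not follow. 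Neither of your suggested fixes closes this: the linear term is \emph{not} dominated by the variance term, and first-order optimality of the cubic subproblem only yields $\|\hb_j\|_2=O\bigl(L/M_j+\sqrt{\|\vb_j\|_2/M_j}\bigr)$, far larger than the $O(\sqrt{\epsilon/\rho}/\log)$ you would need. Two repairs: (i) apply Freedman to the finer martingale indexed by individual samples, whose increments are a.s.\ bounded by $2\rho\|\hb_{j-1}\|_2/B^{(h)}_j\le\sqrt{\rho\epsilon}/(400S^{(h)}\log^2(2Td/\xi))$ once you combine \eqref{hessianVariance-1} with $\|\hb_{j-1}\|_2>\sqrt{\epsilon/\rho}$; or (ii) follow the paper and first upgrade each $\|M_j\|_2$ to a high-probability bound of order $\sqrt{\rho\epsilon/(S^{(h)}\log(d/\delta))}$ before invoking Azuma on the outer sum.
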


Given all the above lemmas, we are ready to prove Theorem \ref{Theorem1}.
\begin{proof}[Proof of Theorem \ref{Theorem1}]

Suppose that $\nameheavy$ terminates at iteration $T^*-1$, then $\|\hb_t\|_2 > \sqrt{\epsilon/\rho}$ for all $0 \leq t \leq T^*-1$. We have
\begin{align}
F(\xb_{t+1}) &\leq F(\xb_t) + \la \dF(\xb_t), \hb_t\ra + \frac{1}{2}\la \hb_t, \HF(\xb_t)\hb_t\ra + \frac{\rho}{6}\|\hb_t\|_2^3\notag \\
&  = F(\xb_t) + m_t(\hb_t) + \frac{\rho - M_{t}}{6}\|\hb_t\|_2^3 
+\la \hb_t , \dF(\xb_t) - \vb_t\ra 
+ \frac{1}{2}\la \hb_t, (\HF(\xb_t) - \Ub_t)\hb_t\ra\notag \\
& \leq F(\xb_t) - \frac{\rho}{2}\|\hb_t\|_2^3 + \frac{\rho }{4}\|\hb_t\|_2^3 +\frac{6\|\dF(\xb_t) - \vb_t\|_2^{3/2}}{5\sqrt{\rho}}+ \frac{10}{\rho^2}\|\HF(\xb_t) - \Ub_t\|_2^3\notag \\
& = 
F(\xb_t) - \frac{\rho}{4}\|\hb_t\|_2^3 + \frac{6\|\dF(\xb_t) - \vb_t\|_2^{3/2}}{5\sqrt{\rho}}+\frac{10}{\rho^2}\|\HF(\xb_t) - \Ub_t\|_2^3,\label{Theorem1_0}
\end{align}
where the second inequality holds due to the fact that $m_t(\hb_t) \leq m_t(\zero) = 0$, $ M_t = 4\rho$ and Lemma \ref{lemma:youngs}.
By Lemmas \ref{gradientVariance} and \ref{HessianVariance}, with probability at least $1-2T\delta$, for all $0 \leq t \leq T-1$, we have that 
\begin{align}
    \|\dF(\xb_t) - \vb_t\|_2^{3/2} \leq \frac{\epsilon^{3/2}}{12},\quad \|\HF(\xb_t) - \Ub_t\|_2^3 \leq \frac{(\rho\epsilon)^{3/2}}{80}\label{Theorem1_1}
\end{align}
for all $0 \leq t \leq T-1$. Substituting \eqref{Theorem1_1} into \eqref{Theorem1_0}, we have
\begin{align}
    F(\xb_{t+1}) \leq F(\xb_{t}) - \frac{\rho}{4}\|\hb_t\|_2^3 + \frac{9\rho^{-1/2}\epsilon^{3/2}}{40}.\label{theorem_11}
\end{align}
Telescoping \eqref{theorem_11} from $t= 0,\dots,T^*-1$, we have
\begin{align}
    \Delta_F\geq F(\xb_0) - F(\xb_{T^*}) \geq \rho\cdot T^*\cdot(\epsilon/\rho)^{3/2}/4-9/40\cdot\rho^{-1/2}\epsilon^{3/2}\cdot T^*  = \rho^{-1/2}\epsilon^{3/2}\cdot T^*/40.\label{Theorem1_1.11}
\end{align}
Recall that we have $T \geq 40 \Delta_F\sqrt{\rho}/\epsilon^{3/2}$ from the condition of Theorem \ref{Theorem1}, then by \eqref{Theorem1_1.11}, we have $T^* \leq T$. 
Thus, we have $\|\hb_{T^*-1}\|_2 \leq \sqrt{\epsilon/\rho}$. Denote $\tilde T =T^* - 1$, then we have
\begin{align}
    \mu(\xb_{\tilde T +1}) &= \mu(\xb_{\tilde T}+\hb_{\tilde T})\notag \\
    & \leq 9 \Big[M_{\tilde T}^{3}\rho^{-3/2}\|\hb_{\tilde T}\|_2^3 +M_{\tilde T}^{3/2}\rho^{-3/2} \big\|\dF(\xb_{\tilde T}^s) - \vb_{\tilde T}\big\|_2^{3/2}+ \rho^{-3/2} \big\|\HF(\xb_{\tilde T}) - \Ub_{\tilde T}\big\|_2^3 \Big]\notag \\
    &\leq 
 600\epsilon^{3/2},\notag
\end{align}
where the first inequality holds due to Lemma \ref{mu_and_h} with $\nabla m_{\tilde T}(\hb_{\tilde T}) = 0$ and $\|\hb_{\tilde T}\|_2 = \|\hb_{\tilde T}^*\|_2$. This completes our proof.
\end{proof}
\subsection{Proof of Corollary \ref{corollary_1}}
\begin{proof}[Proof of Corollary \ref{corollary_1}]
Suppose that $\nameheavy$ terminates at $T^*-1 \leq T-1$ iteration. Telescoping \eqref{theorem_11} from $t = 0$ to $T^*-1$, we have
\begin{align}
    \Delta_F \geq F(\xb_0) - F(\xb_{T^*}) \geq \rho\sum_{t=0}^{T^*-1}\|\hb_t\|_2^3/4 - 9\rho^{-1/2}\epsilon^{3/2}/40\cdot T = \rho\sum_{t=0}^{T^*-1}\|\hb_t\|_2^3/4 -9\cdot \Delta_F,\label{eq:coro_-1}
\end{align}
where the last inequality holds since $T$ is set to be $40\Delta_F\sqrt{\rho}/\epsilon^{3/2}$ as the conditions of Corollary \ref{corollary_1} suggests.
\eqref{eq:coro_-1} implies that $\sum_{t=0}^{T^*-1}\|\hb_t\|_2^3 \leq 40\Delta_F/\rho$. Thus, we have
\begin{align}
    \sum_{t=0}^{T^*-1}\|\hb_t\|_2^2 \leq (T^*)^{1/3}\bigg(\sum_{t=0}^{T^*-1}\|\hb_t\|_2^3\bigg)^{2/3} \leq \bigg(\frac{40\Delta_F\rho^{1/2}}{\epsilon^{3/2}}\bigg)^{1/3}\cdot \bigg(\frac{40\Delta_F}{\rho}\bigg)^{2/3} = \frac{40\Delta_F}{\rho^{1/2}\epsilon^{1/2}},\label{coro_0}
\end{align}
where the first inequality holds due to H\"{o}lder's inequality inequality, and the second inequality is due to $T^* \leq T =40\Delta_F\sqrt{\rho}/\epsilon^{3/2} $.
We first consider the total gradient sample complexity $\sum_{t=0}^{T^*-1}B_t^{(g)}$, which can be bounded as
\begin{align}
    &\sum_{t=0}^{T^*-1}B_t^{(g)} \notag \\
    &= \sum_{\mod(t,S^{(g)}) = 0}B_t^{(g)} + \sum_{\mod(t,S^{(g)}) \neq 0}B_t^{(g)}\notag \\
    & = \sum_{\mod(t,S^{(g)}) = 0}\min\bigg\{n, 1440\frac{\upp^2\log^2(d/\delta)}{\epsilon^2}\bigg\} + \sum_{\mod(t,S^{(g)}) \neq 0}\min\bigg\{n, 1440L^2\log^2(d/\delta)\frac{S^{(g)}\|\hb_{t-1}\|_2^2}{\epsilon^2}\bigg\}\notag \\
    & \leq 
    C_1\bigg[n \land \frac{\upp^2}{\epsilon^2}+\frac{T^*}{S^{(g)}}\bigg(n \land \frac{\upp^2}{\epsilon^2}\bigg) + \bigg(\frac{L^2S^{(g)}}{\epsilon^2}\sum_{t=0}^{T^*-1}\|\hb_t\|_2^2\bigg)\land nT^*\bigg]\notag \\
    & \leq 
    40C_1\bigg[n \land \frac{\upp^2}{\epsilon^2}+\frac{\Delta_F \rho^{1/2}}{\epsilon^{3/2}S^{(g)}}\bigg(n \land \frac{\upp^2}{\epsilon^2}\bigg) + \bigg(\frac{\Delta_FL^2S^{(g)}}{\rho^{1/2}\epsilon^{5/2}}\bigg)\land\frac{n\Delta_F\rho^{1/2}}{\epsilon^{3/2}}\bigg]\notag \\
    & = \tilde O\bigg(n\land \frac{\upp^2}{\epsilon^2} + \frac{\Delta_F}{\epsilon^{3/2}}\bigg[\sqrt{\rho}n \land \frac{L \sqrt{n} }{\sqrt{\epsilon}} \land \frac{L M }{\epsilon^{3/2}}\bigg]\bigg),\notag
\end{align}
where $C_1 = 1440\log^2(d/\delta)$, the second inequality holds due to \eqref{coro_0}, and the last equality holds due to the choice of $S^{(g)} =  \sqrt{\rho\epsilon}/L\cdot\sqrt{n \land M^2/\epsilon^2}$. 
We then consider the total Hessian sample complexity $\sum_{t=0}^{T^*-1}B_t^{(h)}$, which can be bounded as
\begin{align}
    &\sum_{t=0}^{T^*-1}B_t^{(h)} \notag \\
    &= \sum_{\mod(t,S^{(h)}) = 0}B_t^{(h)} + \sum_{\mod(t,S^{(h)}) \neq 0}B_t^{(h)}\notag \\
    & = \sum_{\mod(t,S^{(h)}) = 0}\min\bigg\{n, 800\frac{L^2\log^2(d/\delta)}{\rho\epsilon}\bigg\}
    + \sum_{\mod(t,S^{(h)}) \neq 0}\min\bigg\{n, 800\rho\log^2(d/\delta)\frac{S^{(h)}\|\hb_{t-1}\|_2^2}{\epsilon}\bigg\}\notag \\
    & \leq 
    C_2\bigg[n \land \frac{ L^2}{\rho \epsilon}+\frac{T^*}{S^{(h)}}\bigg(n \land \frac{ L^2}{\rho \epsilon}\bigg) + \frac{\rho S^{(h)}}{\epsilon}\sum_{t=0}^{T^*-1}\|\hb_t\|_2^2\bigg]\notag \\
     &\leq 40C_2\bigg[n \land \frac{ L^2}{\rho \epsilon}
     +\frac{\Delta_F \rho^{1/2}}{\epsilon^{3/2}S^{(h)}}\bigg(n \land \frac{ L^2}{\rho \epsilon}\bigg) + \frac{\Delta_F\rho^{1/2}S^{(h)}}{\epsilon^{3/2}}
     \bigg]\notag \\
     & = 
    \tilde O\bigg[n \land \frac{ L^2}{\rho \epsilon}
     +\frac{\Delta_F \rho^{1/2}}{\epsilon^{3/2}}\sqrt{n \land \frac{ L^2}{\rho \epsilon}}
     \bigg],\notag
\end{align}
where $C_2 = 800\log^2(d/\delta)$, the second inequality holds due to \eqref{coro_0}, and the last equality holds due to the choice of $S^{(h)} = \sqrt{n \land L/(\rho\epsilon)}$.
\end{proof}

\section{Proofs in Section \ref{sec:hessfree}}\label{app:b}
In this section, we denote $\delta = \xi/(3T)$ for simplicity.
\subsection{Proof of Theorem \ref{thm:2}}

We need the following two lemmas, which bound the variance of semi-stochastic gradient and Hessian estimators.
\begin{lemma}\label{gradientVariance2}
Suppose that $\{B_k^{(g)}\}$ satisfies \eqref{gradientVariance-3} and \eqref{gradientVariance-4}, then conditioned on $\cF_{\lfloor t/S \rfloor\cdot S}$, with probability at least $1-\delta\cdot(t - \lfloor t/S\rfloor\cdot S)$, we have that for all $\lfloor t/S \rfloor\cdot S \leq k \leq t$,
\begin{align}
    \|\dF(\xb_k) - \vb_k\|_2^2 \leq \frac{\epsilon^2}{55}.\notag
\end{align}
\end{lemma}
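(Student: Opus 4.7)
}

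The plan is to follow essentially the same strategy as in the proof of Lemma \ref{gradientVariance} for $\nameheavy$, since the gradient estimator in $\namefree$ has the identical recursive SPIDER-style structure; only the numerical constants (and hence the constant in the denominator, here $55$ instead of $30$) will change. Let $k_0 = \lfloor t/S^{(g)}\rfloor\cdot S^{(g)}$ denote the last ``reset'' index. I would first treat the base case $k = k_0$, where $\vb_{k_0} = \nabla f_{\cJ_{k_0}}(\xb_{k_0})$ is a subsampled gradient. Conditioned on $\cF_{k_0}$, the randomness is only in the sampling of $\cJ_{k_0}$, so Assumption \ref{assumption_3} (boundedness of $\|\nabla f_i(\xb) - \nabla F(\xb)\|_2 \leq M$) together with a vector Bernstein-type inequality gives $\|\nabla F(\xb_{k_0}) - \vb_{k_0}\|_2^2 \leq \epsilon^2/55$ with probability at least $1 - \delta$, provided $B_{k_0}^{(g)}$ satisfies the lower bound \eqref{gradientVariance-4} with the constant $2640$ chosen so that all constants absorb cleanly.

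For $k > k_0$, I would unroll the recursion \eqref{def_vv} into the telescoping decomposition
\begin{align*}
\vb_k - \nabla F(\xb_k)
= \bigl(\vb_{k_0} - \nabla F(\xb_{k_0})\bigr)
+ \sum_{j=k_0+1}^{k} \bm{\xi}_j,
\end{align*}
where
\begin{align*}
\bm{\xi}_j := \bigl[\nabla f_{\cJ_j}(\xb_j) - \nabla f_{\cJ_j}(\xb_{j-1})\bigr] - \bigl[\nabla F(\xb_j) - \nabla F(\xb_{j-1})\bigr].
\end{align*}
Conditioned on $\cF_j$, the $\bm{\xi}_j$ form a vector-valued martingale difference sequence with $\EE[\bm{\xi}_j \mid \cF_j] = 0$. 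Using $L$-gradient Lipschitzness (Assumption \ref{assumption_2}), each summand in the empirical average defining $\bm{\xi}_j$ has norm at most $2L\|\xb_j - \xb_{j-1}\|_2 = 2L\|\hb_{j-1}\|_2$, so the per-coordinate variance of $\bm{\xi}_j$ is bounded by $L^2\|\hb_{j-1}\|_2^2 / B_j^{(g)}$. A vector Azuma/Bernstein inequality then gives, with probability $1-\delta$, a bound of order $L\|\hb_{j-1}\|_2 \sqrt{\log(d/\delta)/B_j^{(g)}}$ on $\|\bm{\xi}_j\|_2$. Plugging in the lower bound \eqref{gradientVariance-3} for $B_j^{(g)}$ makes $\|\bm{\xi}_j\|_2^2 \lesssim \epsilon^2/(S^{(g)}\log^2(3T/\xi))$ up to the absolute constant $2640$.

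The final step is to combine everything. Summing at most $S^{(g)}$ martingale terms (by the structure of epochs) and using $\|a+b\|_2^2 \le 2\|a\|_2^2 + 2\|b\|_2^2$, the telescoped noise contributes at most $\epsilon^2/(55\cdot 2)$ and the base case contributes the other half, yielding the claimed $\epsilon^2/55$ bound. A union bound over the at most $S^{(g)}$ events in the current epoch, each of probability $\delta$, yields the overall failure probability $\delta \cdot (t - k_0)$ claimed in the statement. The main subtlety, as in Lemma \ref{gradientVariance}, is that $B_j^{(g)}$ itself depends on $\|\hb_{j-1}\|_2$, which is $\cF_j$-measurable; so one must apply the concentration inequality conditionally on $\cF_j$, where $B_j^{(g)}$ is deterministic, and then chain the conditional statements together via a careful induction on $k$ over the epoch. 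This measurability-under-conditioning argument is the main technical point to get right; all other estimates are standard given Assumptions \ref{assumption_2} and \ref{assumption_3}.
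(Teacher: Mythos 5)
Your proposal follows essentially the same route as the paper's proof of Lemma~\ref{gradientVariance} (the paper omits the proof of Lemma~\ref{gradientVariance2}, citing similarity to that one): you use the same telescoping decomposition of $\vb_k - \nabla F(\xb_k)$ into increments, bound each increment using $L$-gradient-Lipschitzness and Assumption~\ref{assumption_3} via a Hoeffding/Bernstein step, apply a vector Azuma-type martingale concentration bound, and take a union bound over the epoch — all matching the paper, with only updated constants as you note. The one cosmetic difference is that you separate the reset term $\vb_{k_0} - \nabla F(\xb_{k_0})$ from the martingale increments and recombine with $\|a+b\|_2^2 \le 2\|a\|_2^2 + 2\|b\|_2^2$, whereas the paper treats the reset term as the first element of a single martingale difference sequence $\{\ub_k\}$ and applies Azuma once to the whole sum, which avoids the extra factor of $2$; either way the argument closes with the chosen constants.
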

\begin{proof}[Proof of Lemma \ref{gradientVariance2}]
The proof is very similar to that of Lemma \ref{gradientVariance}, hence we omit it.
\end{proof}

\begin{lemma}\label{HessianVariance2}
Suppose that $\{B_k^{(h)}\}$ satisfies \eqref{HessianVariance-3}, then conditioned on $\cF_k$, with probability at least $1-\delta$, we have that 
\begin{align}
    \|\HF(\xb_k) - \Ub_k\|_2^2 \leq \frac{\rho\epsilon}{30}.\notag
\end{align}
\end{lemma}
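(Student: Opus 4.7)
The plan is to apply a matrix concentration inequality to the subsampled Hessian $\Ub_k$. The key observation is that, unlike in Algorithm \ref{algorithm:1}, the Hessian estimator in Algorithm \ref{algorithm:2} is purely subsampled (not recursively updated): $\Ub_k = \frac{1}{|\cI_k|}\sum_{i\in\cI_k}\Hf_i(\xb_k)$. Hence, conditioned on $\cF_k$, the centered matrices $Y_i := \Hf_i(\xb_k)-\HF(\xb_k)$ for $i\in\cI_k$ are i.i.d., mean-zero, and bounded in spectral norm by $\|Y_i\|_2\leq 2L$ by Assumption \ref{assumption_2}. The case $B_k^{(h)}=n$ makes $\Ub_k=\HF(\xb_k)$ exactly, so I only need to handle $B_k^{(h)} = \lceil 1200L^2\log^2(3Td/\xi)/(\rho\epsilon)\rceil$.

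Next I would apply a matrix Hoeffding (or matrix Bernstein) inequality to the average $\Ub_k-\HF(\xb_k)=\frac{1}{B}\sum_{i\in\cI_k}Y_i$, where $B=B_k^{(h)}$. This yields a tail bound of the form
\begin{align*}
\Pr\!\bigl[\|\Ub_k-\HF(\xb_k)\|_2 > t\,\big|\,\cF_k\bigr] \;\leq\; 2d\exp\!\Bigl(-\frac{B t^2}{C L^2}\Bigr)
\end{align*}
for a universal constant $C$. Setting $t^2=\rho\epsilon/30$ and substituting the prescribed batch size, the exponent becomes $\Omega(\log^2(3Td/\xi))$, which makes the right-hand side at most $\delta=\xi/(3T)$ after adjusting constants. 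Squaring the resulting bound on $\|\Ub_k-\HF(\xb_k)\|_2$ gives the claim.

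The only thing to verify carefully is that the constant $1200$ in \eqref{HessianVariance-3} is large enough to absorb the universal constant $C$ from the matrix concentration inequality together with the $\log(2d/\delta)$ factor arising from the union bound over the $d$ eigenvalues. Compared with Lemma \ref{HessianVariance} for $\nameheavy$, the argument here is strictly simpler: since the estimator is a single sub-sample rather than a recursive update, no martingale telescoping across an epoch of length $S^{(h)}$ is needed, which is why \eqref{HessianVariance-3} contains no factor of $S^{(h)}$ and why the high-probability statement holds pointwise at iteration $k$ rather than over a range. I do not anticipate any substantive obstacle beyond bookkeeping the constants.
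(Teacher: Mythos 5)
Your proposal is correct and follows essentially the same route the paper intends: since $\Ub_k$ in $\namefree$ is a plain subsampled Hessian, this lemma reduces to the base case of Lemma~\ref{HessianVariance} (equivalently, the bound \eqref{hoeff_prac_1} of Lemma~\ref{hoeff_prac} applied via the matrix Azuma/Hoeffding inequality of Lemma~\ref{Azuma}), with no telescoping across an epoch. Your constant check also goes through: with $B_k^{(h)}\geq 1200L^2\log^2(3Td/\xi)/(\rho\epsilon)$ and $\delta=\xi/(3T)$, the bound $\|\Ub_k-\HF(\xb_k)\|_2\leq 6L\sqrt{\log(d/\delta)/B_k^{(h)}}$ gives $\|\Ub_k-\HF(\xb_k)\|_2^2\leq 36\rho\epsilon/(1200\log(d/\delta))\leq\rho\epsilon/30$ whenever $\log(d/\delta)\geq 9/10$.
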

\begin{proof}[Proof of Lemma \ref{HessianVariance2}]
The proof is very similar to that of Lemma \ref{HessianVariance}, hence we omit it.
\end{proof}

We have the following lemma to guarantee that by Algorithm \ref{alg:sub} $\namesub$, the output $\hb_t$ satisfies that sufficient decrease of function value will be made and the total number of iterations is bounded by $T'$.
\begin{lemma}\label{sufficientde}
For any $t\geq 0$, suppose that $\|\hb_t^*\|_2 \geq \sqrt{\epsilon/\rho}$ or $\|\vb_t\|_2 \geq \max\{M_t\epsilon/(2\rho), \sqrt{LM_t/2}(\epsilon/\rho)^{3/4}\} $. We set $\eta  = 1/(16L)$. Then for $\epsilon < 16L^2\rho/M_t^2$, with probability at least $1-\delta$, $\namesub (\Ub_t, \vb_t, M_t, \eta, \sqrt{\epsilon/\rho}, 0.5,\delta )$ will return $\hb_t$ satisfying $m_t(\hb_t) \leq -M_t\rho^{-3/2}\epsilon^{3/2}/24$. within
\begin{align}
    T' =C_S \frac{L}{M_t\sqrt{\epsilon/\rho}}\notag
\end{align}
iterations, where $C_S>0$ is a constant. 
\end{lemma}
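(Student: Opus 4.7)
The plan is to adapt the Hessian-free cubic-subsolver analysis of \citet{Carmon2016Gradient} and \citet{tripuraneni2018stochastic} to the semi-stochastic gradient/Hessian-vector oracle setting, and to split into the two cases of the hypothesis. Write $m_t(\hb) = \la\vb_t,\hb\ra + \tfrac12\la\Ub_t\hb,\hb\ra + \tfrac{M_t}{6}\|\hb\|_2^3$. Recall $\namesub$ either initializes at a deterministic Cauchy-type point along $-\vb_t/\|\vb_t\|_2$ (when $\|\vb_t\|_2$ is large) or at a random perturbation of the origin (otherwise), and then runs gradient descent with step size $\eta = 1/(16L)$.

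First I would handle Case 1: $\|\vb_t\|_2 \geq \max\{M_t\epsilon/(2\rho),\sqrt{LM_t/2}(\epsilon/\rho)^{3/4}\}$. Evaluating $m_t$ at the Cauchy point $\hb^{(0)} = -R_c\,\vb_t/\|\vb_t\|_2$ with the $R_c$ chosen by $\namesub$, a direct calculation bounds the linear, quadratic and cubic contributions separately using $\|\Ub_t\|_2 \le L + \tilde O(\sqrt{\rho\epsilon})$ (from Lemma~\ref{HessianVariance2} and Assumption~\ref{assumption_2}), and the two branches of the lower bound on $\|\vb_t\|_2$ ensure one of the two terms dominates, giving $m_t(\hb^{(0)}) \leq -c\, M_t\rho^{-3/2}\epsilon^{3/2}$ for some absolute constant $c$ at least $1/24$. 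The subsequent gradient descent steps only decrease $m_t$, so Case~1 closes with the iteration count dominated by the later case.

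Next, Case 2: $\|\hb_t^*\|_2 \geq \sqrt{\epsilon/\rho}$ but $\|\vb_t\|_2$ is below the thresholds above. Here I use the standard optimality property for cubic minimizers (\eqref{naman_1_1}-type bound from \citet{Agarwal2017Finding}), which gives $m_t(\hb_t^*) \leq -M_t\|\hb_t^*\|_2^3/12 \leq -M_t\rho^{-3/2}\epsilon^{3/2}/12$. The condition $\epsilon < 16L^2\rho/M_t^2$ is exactly what ensures that, on the ball $\|\hb\|_2 = O(\sqrt{\epsilon/\rho})$ of interest, the Hessian of $m_t$ has operator norm bounded by $\|\Ub_t\|_2 + M_t\|\hb\|_2 = O(L)$, so that $m_t$ is $O(L)$-smooth there and the step size $\eta = 1/(16L)$ yields the monotone descent estimate $m_t(\hb^{(i+1)}) \leq m_t(\hb^{(i)}) - \tfrac{\eta}{2}\|\nabla m_t(\hb^{(i)})\|_2^2$. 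I would combine this with a saddle-escape argument (as in Lemma 17 of \citet{Carmon2016Gradient} or Lemma 15 of \citet{tripuraneni2018stochastic}): the random perturbation at initialization aligns non-trivially with the most negative eigen-direction of $\Ub_t + M_t\|\hb_t^*\|_2/2\cdot\Ib$ with probability $1-\delta$, and the amplification rate of that direction under the perturbed gradient dynamics is $\Theta(\eta M_t \|\hb_t^*\|_2) = \Theta(M_t\sqrt{\epsilon/\rho}/L)$, so after $T' = C_S L/(M_t\sqrt{\epsilon/\rho})\cdot\log(1/\delta)$ iterations the iterate either leaves the small ball — forcing $m_t$ below the target by cubic coercivity — or has driven $m_t$ below $m_t(\hb_t^*)/2 \leq -M_t\rho^{-3/2}\epsilon^{3/2}/24$ by the descent lemma.

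The main obstacle is the saddle-escape step in Case~2: one must keep the iterates inside the ball where the effective smoothness is $O(L)$ while still showing that the random perturbation eventually excites enough curvature to exit a neighborhood of the origin. The clean way to do this is a potential argument that tracks the projection onto the top negative-curvature subspace versus its orthogonal complement, and shows the ratio grows geometrically with rate $1 + \Omega(\eta M_t \sqrt{\epsilon/\rho})$, giving the stated iteration count once routine algebra ties together $\eta$, the target tolerance, and the threshold $\sqrt{\epsilon/\rho}$. The failure probability $\delta$ comes solely from the perturbation direction lying in a small-measure cap; a union bound with Lemmas~\ref{gradientVariance2} and~\ref{HessianVariance2} accounts for the overall $1-\delta$ statement.
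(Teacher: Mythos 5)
Your proof is not wrong, but it takes a much harder road than the paper. The paper's proof is essentially a one-step black-box invocation: it quotes Lemma~\ref{carmon_0} (the Cubic-Subsolver guarantee of \citet{Carmon2016Gradient}) verbatim, sets $\Ab = \Ub_t$, $\bbb = \vb_t$, $\tau = M_t$, $\eta = 1/(16L)$, $\zeta = \sqrt{\epsilon/\rho}$, $\epsilon' = 0.5$, $\beta = L$, checks the step-size precondition $\eta < 1/(8\beta + 2\tau\zeta)$ via the hypothesis $\epsilon < 16L^2\rho/M_t^2$ (equivalently $M_t\sqrt{\epsilon/\rho} < 4L$), observes that the "or"~hypothesis of the lemma is exactly \eqref{carmon_0_0}, and reads off both the decrease $m_t(\hb_t) \le -(1-\epsilon')M_t\zeta^3/12 = -M_t\rho^{-3/2}\epsilon^{3/2}/24$ and the iteration count $T' = \tfrac{480}{\eta\tau\zeta\epsilon'}[\cdots] = \tilde O(L/(M_t\sqrt{\epsilon/\rho}))$ directly from the statement of Lemma~\ref{carmon_0} and the definition of $T'$ inside Algorithm~\ref{alg:sub}. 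What you propose instead is to re-derive the internals of that black box: the Cauchy-point decrease in one case, and an explicit saddle-escape / geometric-amplification argument in the other. That is the correct structure of the underlying Carmon--Duchi analysis, and your sketch of both cases is sound in outline, so in principle it gets there --- but it reproves a cited result rather than applying it, which is substantially more work and is not what the paper intends.

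Two small slips worth flagging. First, you invoke Lemmas~\ref{gradientVariance2} and~\ref{HessianVariance2} both for the bound $\|\Ub_t\|_2 \le L + \tilde O(\sqrt{\rho\epsilon})$ and for the final union bound, but neither is needed here: in $\namefree$ the matrix $\Ub_t$ is just the subsampled Hessian $\Hf_{\cI_t}(\xb_t)$, so $\|\Ub_t\|_2 \le L$ deterministically under Assumption~\ref{assumption_2}, and the only source of randomness in Lemma~\ref{sufficientde} is the direction of the random perturbation inside $\namesub$, which is exactly the $\delta'$ in Lemma~\ref{carmon_0}; the variance lemmas enter only later, in the proof of Theorem~\ref{thm:2}, via a separate union bound. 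Second, your iteration count picks up an extra $\log(1/\delta)$ factor explicitly; in the paper this lives inside the $\tilde O(\cdot)$ and in the absolute constant $C_S$, so it is consistent but you should note the constant is only constant up to log factors.
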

We have the following lemma which provides the guarantee for the dynamic of gradient steps in $\namefinal$.

\begin{lemma}\label{carmon_1}\citep{Carmon2016Gradient}
For $\bbb, \Ab, \tau$, suppose that $\|\Ab\|_2 \leq L$. 
We denote that $g(\hb) = \bbb^\top\hb + \hb^\top\Ab\hb/2 + \tau/6\cdot\|\hb\|_2^3$, $\sbb = \argmin_{\hb \in \RR^d}g(\hb)$, and let $R$ be 
\begin{align}
    R = \frac{L}{2\tau}+\sqrt{\bigg(\frac{L}{2\tau}\bigg)^2 + \frac{\|\bbb\|_2}{\tau}}.\notag
\end{align}
Then for $\namefinal$, suppose that $\eta<(4(L+\tau R))^{-1}$, then each iterate $\Delta$ in $\namefinal$ satisfies that $\|\Delta\|_2 \leq \|\sbb\|_2$, and $g(\hb)$ is $(L+2\tau R)$-smooth.
\end{lemma}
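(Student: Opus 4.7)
The claim splits into two parts: first, a smoothness bound on the cubic model $g$ inside a ball of a suitable radius; second, an invariance showing that the iterates of $\namefinal$ never leave the ball $\{\hb:\|\hb\|_2\leq \|\sbb\|_2\}$. Starting with smoothness, I would compute the Hessian of $g$ for $\hb\neq 0$:
\[
\nabla^2 g(\hb) = \Ab + \frac{\tau}{2}\|\hb\|_2\Ib + \frac{\tau}{2}\frac{\hb\hb^\top}{\|\hb\|_2},
\]
and use the triangle inequality together with $\|\Ab\|_2\leq L$ and $\|\hb\hb^\top\|_2=\|\hb\|_2^2$ to get the pointwise bound $\|\nabla^2 g(\hb)\|_2\leq L+\tau\|\hb\|_2$. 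Consequently, on the closed ball $\{\hb:\|\hb\|_2\leq 2R\}$, $g$ is $(L+2\tau R)$-smooth, reducing the entire lemma to showing that both $\sbb$ and every iterate of $\namefinal$ have norm at most $2R$.

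To bound $\|\sbb\|_2\leq 2R$, I would invoke the first-order optimality condition $\bbb + \Ab\sbb + \frac{\tau}{2}\|\sbb\|_2\sbb=0$, take the inner product with $\sbb$, and apply Cauchy--Schwarz on $|\bbb^\top\sbb|$ together with the spectral bound on $\sbb^\top\Ab\sbb$. This yields the quadratic inequality $\frac{\tau}{2}\|\sbb\|_2^2 - L\|\sbb\|_2 - \|\bbb\|_2\leq 0$; solving gives $\|\sbb\|_2\leq (L+\sqrt{L^2+2\tau\|\bbb\|_2})/\tau$, which is easily checked to be at most $2R$ using the defining identity $\tau R^2 - LR - \|\bbb\|_2=0$.

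For the iterate-norm bound I would proceed by induction on the iteration index $k$, with base case $\Delta_0=0$. Assuming the inductive hypothesis $\|\Delta_k\|_2\leq \|\sbb\|_2$, the update $\Delta_{k+1}=\Delta_k-\eta\nabla g(\Delta_k)$, combined with the $(L+2\tau R)$-smoothness established above and the step-size choice $\eta<(4(L+\tau R))^{-1}$, should imply $\|\Delta_{k+1}\|_2\leq\|\sbb\|_2$. A convenient execution is to expand $\|\Delta_{k+1}\|_2^2 = \|\Delta_k\|_2^2 - 2\eta\Delta_k^\top\nabla g(\Delta_k) + \eta^2\|\nabla g(\Delta_k)\|_2^2$ and then control the middle term via convexity of the radial cubic $\|\cdot\|_2^3/6$, absorbing the $\eta^2$ error using the smoothness bound and the small step size.

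The main obstacle will be the inductive step for the iterate bound. Unlike standard gradient-descent analyses on globally smooth convex objectives, here $g$ is nonconvex because $\Ab$ may be indefinite, so monotone descent in norm must rely on the compensating curvature of the cubic regularizer rather than on positivity of $\Ab$. The step-size restriction $\eta<(4(L+\tau R))^{-1}$ is essentially tight: already the first step satisfies $\|\Delta_1\|_2=\eta\|\bbb\|_2$, and since $\|\bbb\|_2$ can be as large as $R(\tau R-L)$ by the defining equation of $R$, a larger step size could push $\Delta_1$ outside the admissible ball. Making the argument precise in the general inductive step, and identifying which cross-terms between the quadratic and cubic contributions of $\nabla g$ cancel, is where the technical care is needed.
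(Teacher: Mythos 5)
This lemma is cited verbatim from \citet{Carmon2016Gradient}; the paper does not reprove it, so there is no internal proof to compare against. Evaluated on its own merits, your proposal nails the easy half (the Hessian of $g$ at $\hb\neq 0$ is $\Ab+\tfrac{\tau}{2}\|\hb\|_2\Ib+\tfrac{\tau}{2}\hb\hb^\top/\|\hb\|_2$, giving $\|\nabla^2 g(\hb)\|_2\le L+\tau\|\hb\|_2$, and the first-order optimality condition for $\sbb$ does give $\tfrac{\tau}{2}\|\sbb\|_2^2\le L\|\sbb\|_2+\|\bbb\|_2$, hence $\|\sbb\|_2\le 2R$). But the iterate-confinement half has two genuine problems.

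First, your base case is wrong: $\namefinal$ does not start at $\Delta_0=\zero$. It initializes $\Delta$ at the Cauchy point $-R_c\bbb/\|\bbb\|_2$ (see Algorithm~\ref{alg:finalsub} and the CauchyPoint routine), and you would need a separate argument establishing $\|\text{CauchyPoint}\|_2\le\|\sbb\|_2$ before any induction can begin. Second, and more seriously, the route you propose for the inductive step runs in the wrong direction. Expanding $\|\Delta_{k+1}\|_2^2=\|\Delta_k\|_2^2-2\eta\,\Delta_k^\top\nabla g(\Delta_k)+\eta^2\|\nabla g(\Delta_k)\|_2^2$ and trying to force $\|\Delta_{k+1}\|_2\le\|\Delta_k\|_2$ would, if anything, prove the opposite of the true behavior: in \citet{Carmon2016Gradient} the iterate norms are \emph{monotonically non-decreasing} and increase from the Cauchy point toward $\|\sbb\|_2$ from below. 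The hypothesis $\|\Delta_k\|_2\le\|\sbb\|_2$ is too weak to propagate by a norm-only argument; the actual proof diagonalizes $\Ab=\sum_i\lambda_i\vb_i\vb_i^\top$, writes the update coordinate-wise as $\hat\Delta_{k+1}^{(i)}=\hat\Delta_k^{(i)}-\eta\big[\hat b^{(i)}+(\lambda_i+\tfrac{\tau}{2}\|\Delta_k\|_2)\hat\Delta_k^{(i)}\big]$, and establishes a componentwise sandwich invariant (each $\hat\Delta_k^{(i)}$ lies between $0$ and $\hat s^{(i)}$, with matching sign) from which both the norm bound $\|\Delta_k\|_2\le\|\sbb\|_2$ and the monotone increase follow simultaneously. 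Without that eigenbasis decomposition and the stronger coordinatewise invariant, the cross-term cancellations you hope for do not materialize.
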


With these lemmas, we begin our proof of Theorem \ref{thm:2}.
\begin{proof}[Proof of Theorem \ref{thm:2}]
Suppose that $\namefree$ terminates at iteration $T^*-1$. Then $T^* \leq T$. We first claim that $T^*<T$. Otherwise, suppose $T^* = T$, then we have that for all $0 \leq t <T^*$,
\begin{align}
    F(\xb_{t+1}) &\leq F(\xb_t) + \la \dF(\xb_t), \hb_t\ra + \frac{1}{2}\la \hb_t, \HF(\xb_t)\hb_t\ra + \frac{\rho}{6}\|\hb_t\|_2^3\notag \\
&  = F(\xb_t) + m_t(\hb_t) + \frac{\rho - M_{t}}{6}\|\hb_t\|_2^3 
+\la \hb_t , \dF(\xb_t) - \vb_t\ra 
+ \frac{1}{2}\la \hb_t, (\HF(\xb_t) - \Ub_t)\hb_t\ra\notag \\
& \leq F(\xb_t) - \frac{\rho}{4}\|\hb_t\|_2^3
+m_t(\hb_t)
+\frac{6\|\dF(\xb_t) - \vb_t\|_2^{3/2}}{5\sqrt{\rho}}+ \frac{10}{\rho^2}\|\HF(\xb_t) - \Ub_t\|_2^3,\label{Theorem2_0}
\end{align}
where the second inequality holds due to $M_t =4\rho$ and Lemma \ref{lemma:youngs}.
By Lemma \ref{sufficientde} and union bound, we know that with probability at least $1-T\delta$, we have
\begin{align}
    m_t(\hb_t) \leq -M_t\rho^{-3/2}\epsilon^{3/2}/24= -\rho^{-1/2}\epsilon^{3/2}/6,\label{Theorem2_1}
\end{align}
where we use the fact that $M_t = 4\rho$. By Lemmas \ref{gradientVariance2} and \ref{HessianVariance2}, we know that with probability at least $1-2T\delta$, for all $0 \leq t\leq T^*-1$, we have
\begin{align}
    \|\dF(\xb_t) - \vb_t\|_2^{3/2} \leq \epsilon^{3/2}/20,\quad \|\HF(\xb_t) - \Ub_t\|_2^3 \leq (\rho\epsilon)^{3/2}/160.\label{Theorem2_2}
\end{align}
Substituting \eqref{Theorem2_1} and \eqref{Theorem2_2} into \eqref{Theorem2_0}, we have
\begin{align}
    F(\xb_{t+1}) - F(\xb_t) \leq -\rho^{-1/2}\epsilon^{3/2}/6 - \rho\|\hb_t\|_2^3/4 + \rho^{-1/2}\epsilon^{3/2}/8 \leq -\rho\|\hb_t\|_2^3/4 -\rho^{-1/2}\epsilon^{3/2}/24.\label{Theorem2_3}
\end{align}
Telescoping \eqref{Theorem2_3} from $t = 0$ to $T^*-1$, we have
\begin{align}
    \Delta_F \geq F(\xb_0) - F(\xb_{T^*}) \geq \rho \sum_{t=0}^{T^*-1}\|\hb_t\|_2^3/4 + \rho^{-1/2}\epsilon^{3/2}\cdot T^*/24 > \rho \sum_{t=0}^{T^*-1}\|\hb_t\|_2^3/4 + \Delta_F,\label{Theorem2_4}
\end{align}
where the last inequality holds since we assume $T^* = T \geq 25\Delta_F\rho^{1/2}\epsilon^{-3/2}$ from the condition of Theorem \ref{thm:2}.  \eqref{Theorem2_4} leads to a contradiction, thus we have $T^* < T$. 
Therefore, by union bound, with probability at least $1-3T\delta$, $\namefinal$ is executed by $\namefree$ at $T^*-1$ iteration. We have that $\|\vb_{T^*-1}\|_2 < \max\{M_{T^*-1}\epsilon/(2\rho), \sqrt{LM_{T^*-1}/2}(\epsilon/\rho)^{3/4}\} $ and $\|\hb_{T^*-1}^*\|_2 < \sqrt{\epsilon/\rho}$ by Lemma \ref{sufficientde}.

The only thing left is to check that we indeed find a second-order stationary point, $\xb_{T^*}$, by $\namefinal$. We first need to check that the choice of $\eta = 1/(16L)$ satisfies that $1/\eta>4(L+M_t R)$ by Lemma \ref{carmon_1}, where 
\begin{align}
    R =  \frac{L}{2M_{T^*-1}}+\sqrt{\bigg(\frac{L}{2M_{T^*-1}}\bigg)^2 + \frac{\|\vb_{T^*-1}\|_2}{M_{T^*-1}}},\notag
\end{align} 
We can check that with the assumption that $\|\vb_{T^*-1}\|_2 < \max\{M_{T^*-1}\epsilon/(2\rho), \sqrt{LM_{T^*-1}/2}(\epsilon/\rho)^{3/4}\} $, if $\epsilon < 4L^2\rho/M_{T^*-1}^2$, then $1/\eta>4(L+M_{T^*-1} R)$ holds. 

For simplicity, we denote $\tilde T = T^* - 1$. Then we have
\begin{align}
    \mu(\xb_{\tilde T} + \hb_{\tilde T})
    & \leq 9 \Big[M_{\tilde T}^{3}\rho^{-3/2}\|\hb_{\tilde T}\|_2^3 + M_{\tilde T}^{3/2}\rho^{-3/2} \big\|\dF(\xb_{\tilde T}) - \vb_{\tilde T}\big\|_2^{3/2}+ \rho^{-3/2} \big\|\HF(\xb_{\tilde T}) - \Ub_{\tilde T}\big\|_2^3 \notag \\
 &\qquad+ M_{\tilde T}^{3/2}\rho^{-3/2}\|\nabla m_{\tilde T}(\hb_{\tilde T})\|_2^{3/2} + M_{\tilde T}^3\rho^{-3/2}\big|\|\hb_{\tilde T}\|_2 - \|\hb_{\tilde T}^*\|_2 \big|^3\Big]\notag \\
 & \leq 9 \Big[2M_{\tilde T}^{3}\rho^{-3/2}\|\hb_{\tilde T}^*\|_2^3 + M_{\tilde T}^{3/2}\rho^{-3/2} \big\|\dF(\xb_{\tilde T}) - \vb_{\tilde T}\big\|_2^{3/2}+ \rho^{-3/2} \big\|\HF(\xb_{\tilde T}) - \Ub_{\tilde T}\big\|_2^3 \notag \\
 &\qquad+ M_{\tilde T}^{3/2}\rho^{-3/2}\|\nabla m_{\tilde T}(\hb_{\tilde T})\|_2^{3/2} \Big]\notag \\
 & \leq 1300 \epsilon^{3/2},\notag
\end{align}
where the first inequality holds due to Lemma \ref{mu_and_h}, the second inequality holds due to the fact that $\|\hb_{\tilde T}\|_2 \leq \|\hb_{\tilde T}^*\|_2 $ from Lemma \ref{carmon_1}, the last inequality holds due to the facts that $\|\nabla m_{\tilde T}(\hb_{\tilde T})\|_2 \leq \epsilon$ from $\namefinal$ and $\|\hb_{\tilde T}^*\|_2 \leq \sqrt{\epsilon/\rho}$ by Lemma \ref{sufficientde}.
\end{proof}

\subsection{Proof of Corollary \ref{free_total_count}}
We have the following lemma to bound the total number of iterations $T''$ of Algorithm \ref{alg:finalsub} $\namefinal$.
\begin{lemma}\label{finalitera}
If $\epsilon < 4L^2\rho/M_t^2$, then $\namefinal$ will terminate within $T'' = C_FL/\sqrt{\rho\epsilon}$ iterations, where $C_F>0$ is a constant. 
\end{lemma}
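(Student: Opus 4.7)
The plan is to interpret $\namefinal$ as gradient descent applied to the cubic model $m_t(\hb)$ from the initialization $\hb=\zero$ with fixed step size $\eta=1/(16L)$, and run the standard non-convex smooth analysis to conclude that $\|\nabla m_t\|_2 \leq \epsilon$ is achieved within $O(L/\sqrt{\rho\epsilon})$ iterations. All the structural facts I need are already available from Lemma \ref{carmon_1}.

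First I would use Lemma \ref{carmon_1} to get (i) the effective smoothness $L'=L+2M_tR$ of $m_t$ along the trajectory and (ii) the fact that every iterate satisfies $\|\hb\|_2\leq\|\hb_t^*\|_2$. To turn (i) into an absolute constant, I would bound $R$ using the two hypotheses available when $\namefinal$ is invoked from $\namefree$: $\|\hb_t^*\|_2\leq\sqrt{\epsilon/\rho}$ and $\|\vb_t\|_2\leq\max\{M_t\epsilon/(2\rho),\,\sqrt{LM_t/2}(\epsilon/\rho)^{3/4}\}$. In both cases, together with $\epsilon<4L^2\rho/M_t^2$, one checks that $\sqrt{\|\vb_t\|_2/M_t}\leq CL/M_t$ for a small absolute constant, hence $R\leq cL/M_t$ and $L'=L+2M_tR\leq C'L$ for an absolute constant $C'$. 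In particular the step size $\eta=1/(16L)$ is safely below $1/L'$, so the standard descent lemma applies.

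Next I would apply the standard one-step descent inequality for gradient descent on an $L'$-smooth function: $m_t(\hb_{k+1})\leq m_t(\hb_k)-\tfrac{\eta}{2}\|\nabla m_t(\hb_k)\|_2^2$. Telescoping over $k=0,\dots,T''-1$ from $\hb_0=\zero$, if no iterate has $\|\nabla m_t(\hb_k)\|_2\leq\epsilon$ then $m_t(\zero)-m_t^*\geq \tfrac{\eta}{2}T''\epsilon^2$, giving
\begin{equation*}
T''\leq\frac{2(m_t(\zero)-m_t^*)}{\eta\epsilon^2}.
\end{equation*}

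To finish I would bound the initial gap. Since $m_t(\zero)=0$ and $m_t^*=m_t(\hb_t^*)\geq -\tfrac{M_t}{6}\|\hb_t^*\|_2^3-\|\vb_t\|_2\|\hb_t^*\|_2-\tfrac{L}{2}\|\hb_t^*\|_2^2$ (or, more cleanly, $|m_t^*|\leq C''M_t\|\hb_t^*\|_2^3$ using the defining identity $\nabla m_t(\hb_t^*)=0$ combined with the bound on $\|\vb_t\|_2$), the hypothesis $\|\hb_t^*\|_2\leq\sqrt{\epsilon/\rho}$ together with $M_t=4\rho$ yields $m_t(\zero)-m_t^*\leq C'''\rho(\epsilon/\rho)^{3/2}=C'''\epsilon^{3/2}/\sqrt{\rho}$. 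Plugging $\eta=1/(16L)$ into the telescoped bound then gives $T''\leq C_F L/\sqrt{\rho\epsilon}$, as claimed.

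The main delicate step is the bookkeeping that turns the two cases in the upper bound on $\|\vb_t\|_2$ into a uniform bound $R\leq cL/M_t$; it is here that the hypothesis $\epsilon<4L^2\rho/M_t^2$ is used, and missing the right case could inflate $L'$ and break the $L/\sqrt{\rho\epsilon}$ scaling. The rest is routine smooth non-convex gradient descent analysis and the cubic bound on $|m_t^*|$.
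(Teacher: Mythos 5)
Your high-level plan matches the paper's: invoke Lemma \ref{carmon_1} to control the effective smoothness of $m_t$ along the trajectory, apply the standard descent-lemma telescoping bound $T''\leq 2(m_t(\Delta_0)-m_t^*)/(\eta\epsilon^2)$, and then bound the initial gap. The step-size check and the smoothness control are the same as in the paper, and that part of your argument is fine.

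The gap is in the bound on $m_t(\Delta_0)-m_t^*\leq -m_t^*$. Your first (``crude'') estimate $-m_t^*\leq \tfrac{M_t}{6}\|\hb_t^*\|_2^3+\|\vb_t\|_2\|\hb_t^*\|_2+\tfrac{L}{2}\|\hb_t^*\|_2^2$ carries the terms $\|\vb_t\|_2\|\hb_t^*\|_2$ and $L\|\hb_t^*\|_2^2$, neither of which is $O(M_t\|\hb_t^*\|_2^3)$ in the regime $\epsilon<4L^2\rho/M_t^2$. Under that hypothesis the dominant case of the $\|\vb_t\|_2$ bound is $\|\vb_t\|_2\lesssim \sqrt{LM_t}(\epsilon/\rho)^{3/4}$, so $\|\vb_t\|_2\|\hb_t^*\|_2\lesssim \sqrt{L\rho}(\epsilon/\rho)^{5/4}$, and $L\|\hb_t^*\|_2^2\lesssim L\epsilon/\rho$; plugging either into $T''\lesssim L(-m_t^*)/\epsilon^2$ gives $T''=O(L^{3/2}/(\rho\epsilon)^{3/4})$ or $T''=O(L^2/(\rho\epsilon))$, both strictly larger than the claimed $O(L/\sqrt{\rho\epsilon})$ precisely because $\epsilon<4L^2\rho/M_t^2$ makes $L/\sqrt{\rho\epsilon}>2$. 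Your ``cleaner'' alternative $|m_t^*|\leq C''M_t\|\hb_t^*\|_2^3$ is asserted but not derived: the optimality identity gives $m_t^*=\tfrac12\vb_t^\top\hb_t^*-\tfrac{M_t}{12}\|\hb_t^*\|_2^3$, and to drop the $\vb_t^\top\hb_t^*$ contribution via Cauchy--Schwarz you would again need $\|\vb_t\|_2\lesssim M_t\|\hb_t^*\|_2^2$, which the available bounds on $\|\vb_t\|_2$ and $\|\hb_t^*\|_2$ do not give when $\epsilon$ is small. The paper closes exactly this hole with Lemma \ref{carmon_2} (from \citet{Carmon2016Gradient}), which provides a lower bound on $m_t^*$ in which the $\|\vb_t\|_2\|\hb_t^*\|_2/2$ term appears with a favorable sign, so that after rearranging one gets $-m_t^*\leq M_t\|\hb_t^*\|_2^3/6$ with the $\|\vb_t\|_2$ term dropped rather than added. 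That is the key ingredient your proposal is missing, and without it the $L/\sqrt{\rho\epsilon}$ scaling does not follow.
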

\begin{proof}[Proof of Corollary \ref{free_total_count}]
We have that
\begin{align}
    \sum_{t=0}^{T^*-1}\|\hb_t\|_2^2 \leq (T^*)^{1/3}\bigg( \sum_{t=0}^{T^*-1}\|\hb_t\|_2^3\bigg)^{2/3} \leq \bigg(\frac{25\Delta_F\rho^{1/2}}{\epsilon^{3/2}}\bigg)^{1/3}\cdot\bigg(\frac{4\Delta_F}{\rho}\bigg)^{2/3} \leq \frac{\Delta_F}{8\rho^{1/2}\epsilon^{1/2}} ,\label{coro2_0}
\end{align}
where the first inequality holds due to H\"{o}lder's inequality, the second inequality holds due to the facts that $T^* \leq T =25\Delta_F\rho^{1/2}/\epsilon^{3/2}$ and $\Delta_F \geq \rho\sum_{t=0}^{T^*-1}\|\hb_t\|_2^3/4$ by \eqref{Theorem2_4}. We first consider the total stochastic gradient computations, $\sum_{t=0}^{T^*-1}B_t^{(g)}$, which can be bounded as
\begin{align}
    &\sum_{t=0}^{T^*-1}B_t^{(g)} \notag \\
    &= \sum_{\mod(t,S^{(g)}) = 0}B_t^{(g)} + \sum_{\mod(t,S^{(g)}) \neq 0}B_t^{(g)}\notag \\
    & = \sum_{\mod(t,S^{(g)}) = 0}\min\bigg\{n, 2640\frac{\upp^2\log^2(d/\delta)}{\epsilon^2}\bigg\} + \sum_{\mod(t,S^{(g)}) \neq 0}\min\bigg\{n, 2640L^2\log^2(d/\delta)\frac{S^{(g)}\|\hb_{t-1}\|_2^2}{\epsilon^2}\bigg\}\notag \\
    & \leq 
    C_1\bigg[n \land \frac{\upp^2}{\epsilon^2}+\frac{T^*}{S^{(g)}}\bigg(n \land \frac{\upp^2}{\epsilon^2}\bigg) + \bigg(\frac{L^2S^{(g)}}{\epsilon^2}\sum_{t=0}^{T^*-1}\|\hb_t\|_2^2\bigg)\land nT^*\bigg]\notag \\
    & \leq 
    8C_1\bigg[n \land \frac{\upp^2}{\epsilon^2}+\frac{\Delta_F \rho^{1/2}}{\epsilon^{3/2}S^{(g)}}\bigg(n \land \frac{\upp^2}{\epsilon^2}\bigg) + \bigg(\frac{\Delta_FL^2S^{(g)}}{\rho^{1/2}\epsilon^{5/2}}\bigg)\land\frac{n\Delta_F\rho^{1/2}}{\epsilon^{3/2}}\bigg]\notag \\
    & = 8C_1\bigg[n \land \frac{\upp^2}{\epsilon^2}+\frac{\Delta_F \rho^{1/2}}{\epsilon^{3/2}}\bigg(\frac{1}{S^{(g)}}\bigg(n \land \frac{\upp^2}{\epsilon^2}\bigg) + \bigg(\frac{L^2S^{(g)}}{\rho\epsilon}\bigg)\land n\bigg)\bigg]\notag\\
    & = 8C_1\bigg[
    n \land \frac{\upp^2}{\epsilon^2}+\frac{\Delta_F \rho^{1/2}}{\epsilon^{3/2}}\bigg(n\land \frac{L\sqrt{n}}{\sqrt{\rho\epsilon}} \land \frac{LM}{\rho^{1/2}\epsilon^{3/2}}\bigg)
    \bigg],\label{count_0}
\end{align}
where $C_1 = 2640\log^2(d/\delta)$, the second inequality holds due to \eqref{coro2_0}, the last equality holds due to the fact $S^{(g)} = \sqrt{\rho\epsilon}/L\cdot \sqrt{n \land M^2/\epsilon^2}$. We now consider the total amount of Hessian-vector product computations $\cT$, which includes $\cT_1$ from $\namesub$ and $\cT_2$ from $\namefinal$. By Lemma \ref{sufficientde}, we know that at $k$-th iteration of $\namefree$, $\namesub$ has $T'$ iterations, which needs $B_k^{(h)}$ Hessian-vector product computations each time. Thus, we have
\begin{align}
    \cT_1 &= \sum_{k=0}^{T^*-1}T'\cdot B_k^{(h)}\nonumber\\
    &\leq C_2\bigg(T\cdot T'\cdot \bigg[n\land \frac{L^2}{\rho\epsilon}\bigg]\bigg)\nonumber\\
    &\leq  25C_2\bigg( T'\frac{\Delta_F\rho^{1/2}}{\epsilon^{3/2}}\bigg[n\land \frac{L^2}{\rho\epsilon}\bigg] \bigg) \nonumber\\
    &\leq 7 C_2C_S\bigg( \frac{L\Delta_F}{\epsilon^2}\cdot \bigg[n\land \frac{L^2}{\rho\epsilon}\bigg] \bigg),\label{count_1}
\end{align}
where $C_2 = 1200\log^2(d/\delta)$, the first inequality holds due to the fact that $B_k^{(h)} = C_2 n \land (L^2/\rho\epsilon)$, the second inequality holds due to the fact that $T = 25\Delta_F \rho^{1/2}/\epsilon^{3/2}$, the last inequality holds due to the fact that $T'  = C_S L/M_t\cdot \sqrt{\rho/\epsilon} = C_S L/(4\sqrt{\rho\epsilon})$.
For $\cT_2$, we have
\begin{align}
    \cT_2  = B_{T^*-1}^{(h)}\cdot T''  \leq C_2T''\bigg[n\land \frac{L^2}{\rho\epsilon}\bigg]  \leq C_2C_F\bigg(\frac{L}{\sqrt{\rho\epsilon}}\cdot \bigg[n\land \frac{L^2}{\rho\epsilon}\bigg] \bigg),\label{count_2}
\end{align}
where the first inequality holds due to the fact that $B_{T^*-1}^{(h)} = C_2 n \land (L^2/\rho\epsilon)$, the second inequality holds due to the fact that 
$T'' = C_FL/\sqrt{\rho\epsilon}$ by Lemma \ref{finalitera}. Since at each iteration we need $B_{T^*-1}^{(h)}$ Hessian-vector computations.

Combining \eqref{count_0}, \eqref{count_1} and \eqref{count_2}, we know that the total stochastic gradient and Hessian-vector product computations are bounded as
\begin{align}
    & \sum_{t=0}^{T^*-1}B_t^{(g)}  + \cT_1 + \cT_2\notag \\
    & = \tilde O\bigg[
    n \land \frac{\upp^2}{\epsilon^2}+\frac{\Delta_F \rho^{1/2}}{\epsilon^{3/2}}\bigg(n\land \frac{L\sqrt{n}}{\sqrt{\rho\epsilon}} \land \frac{LM}{\rho^{1/2}\epsilon^{3/2}}\bigg) + \bigg(\frac{L\Delta_F}{\epsilon^2} + \frac{L}{\sqrt{\rho\epsilon}}\bigg)\cdot \bigg(n\land \frac{L^2}{\rho\epsilon}\bigg) 
    \bigg].\label{free_total_count_4}
\end{align}
\end{proof}

\section{Proofs of Technical Lemmas in Appendix \ref{app_a}}\label{app_c}

\subsection{Proof of Lemma \ref{mu_and_h}}
We have the following lemmas from \citet{zhou2018stochastic}
\begin{lemma}\label{lemma:rrf1}\citep{zhou2018stochastic}
If $M_t \geq 2\rho$, then we have
\begin{align}
    \|\nabla F(\xb_t + \hb)\|_2 \leq M_t\|\hb\|_2^2 +\big\|\dF(\xb_t) - \vb_t\big\|_2 +  \frac{1}{M_t}\big\|\HF(\xb_t) - \Ub_t\big\|_2^2 + \|\nabla m_t(\hb)\|_2.\notag
\end{align}
\end{lemma}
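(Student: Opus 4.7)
The plan is to expand $\nabla F(\xb_t+\hb)$ via a Taylor expansion around $\xb_t$ and then match terms against $\nabla m_t(\hb)$, treating the gradient and Hessian mismatches as perturbations. Concretely, I would start from the $\rho$-Hessian Lipschitz assumption, which gives
\begin{align*}
\nabla F(\xb_t + \hb) = \nabla F(\xb_t) + \HF(\xb_t)\,\hb + R,\qquad \|R\|_2 \le \tfrac{\rho}{2}\|\hb\|_2^2,
\end{align*}
and then rewrite the right-hand side by inserting and subtracting the semi-stochastic versions $\vb_t$ and $\Ub_t$:
\begin{align*}
\nabla F(\xb_t+\hb) = \bigl(\vb_t + \Ub_t \hb\bigr) + \bigl(\dF(\xb_t)-\vb_t\bigr) + \bigl(\HF(\xb_t)-\Ub_t\bigr)\hb + R.
\end{align*}

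The next step is to recognize the first bracket in terms of $\nabla m_t$. Since $\nabla m_t(\hb) = \vb_t + \Ub_t\hb + \tfrac{M_t}{2}\|\hb\|_2\,\hb$, I would substitute $\vb_t + \Ub_t\hb = \nabla m_t(\hb) - \tfrac{M_t}{2}\|\hb\|_2\,\hb$, take norms, and use the triangle inequality. The resulting bound has four clean terms plus the cross term $\|(\HF(\xb_t)-\Ub_t)\hb\|_2$, which is the only non-obvious piece.

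To convert this cross term into the two separate contributions that appear in the lemma, I would apply Young's inequality in the form $ab \le \tfrac{a^2}{M_t} + \tfrac{M_t}{4}b^2$ with $a=\|\HF(\xb_t)-\Ub_t\|_2$ and $b=\|\hb\|_2$, producing a $\tfrac{1}{M_t}\|\HF(\xb_t)-\Ub_t\|_2^2$ contribution and an additional $\tfrac{M_t}{4}\|\hb\|_2^2$ contribution. Collecting the $\|\hb\|_2^2$ coefficients gives $\tfrac{M_t}{2} + \tfrac{M_t}{4} + \tfrac{\rho}{2}$, and here the assumption $M_t \ge 2\rho$ is used to bound $\tfrac{\rho}{2} \le \tfrac{M_t}{4}$, so the total coefficient is at most $M_t$. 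This yields exactly the claimed inequality.

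The only real choice in the proof is the splitting constant in Young's inequality; the rest is bookkeeping. I expect no obstacle beyond picking this constant so that the $\|\hb\|_2^2$ coefficient lands on $M_t$ rather than something larger; the hypothesis $M_t\ge 2\rho$ is exactly what makes this clean accounting work out.
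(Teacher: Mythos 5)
Your proof is correct, and it is the standard argument for this type of bound: a second-order Taylor expansion under $\rho$-Hessian Lipschitzness, insertion of the semi-stochastic estimators, replacement of $\vb_t + \Ub_t\hb$ via the gradient of the cubic model, and a Young-type split of the cross term $\|\HF(\xb_t)-\Ub_t\|_2\|\hb\|_2$ so that the $\|\hb\|_2^2$ coefficients add up to $M_t$ under $M_t\geq 2\rho$. The paper defers the proof to \citet{zhou2018stochastic} rather than reproducing it, but your argument is exactly the one that proves this inequality and I see no gaps.
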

\begin{lemma}\label{lemma:rrf2}\citep{zhou2018stochastic}
If $M_t \geq 2\rho$, then we have
\begin{align}
    -\mineig(\HF(\xb_t + \hb)) \leq M_t\|\hb\|_2 + \big\|\HF(\xb_t) - \Ub_t\big\|_2 + M_t\big|\|\hb\|_2 - \|\hb_t^*\|_2\big|.\notag
\end{align}
\end{lemma}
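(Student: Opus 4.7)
The plan is to chain three standard ingredients: Hessian Lipschitz continuity of $F$, Weyl's inequality comparing $\HF(\xb_t)$ and $\Ub_t$, and the second-order optimality condition for the global minimizer of a cubic-regularized quadratic model.

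First I would use Assumption \ref{assumption_2} (Hessian Lipschitz) to compare eigenvalues at the shifted point with those at $\xb_t$. For any unit vector $\vb$,
\begin{align*}
\vb^\top \HF(\xb_t+\hb)\vb \geq \vb^\top \HF(\xb_t)\vb - \rho\|\hb\|_2,
\end{align*}
so $-\mineig(\HF(\xb_t+\hb)) \leq -\mineig(\HF(\xb_t)) + \rho\|\hb\|_2$. Next, Weyl's inequality on the symmetric matrix pair $(\HF(\xb_t), \Ub_t)$ gives $-\mineig(\HF(\xb_t)) \leq -\mineig(\Ub_t) + \|\HF(\xb_t)-\Ub_t\|_2$.

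The core step is controlling $-\mineig(\Ub_t)$ using the optimality of $\hb_t^*$ for the cubic subproblem $m_t$. Since $\hb_t^*$ is a global minimizer of $m_t$, standard cubic regularization theory (Nesterov and Polyak) yields the second-order condition
\begin{align*}
\Ub_t + \tfrac{M_t}{2}\|\hb_t^*\|_2 \Ib \succeq 0,
\end{align*}
which is equivalent to $-\mineig(\Ub_t) \leq (M_t/2)\|\hb_t^*\|_2$. Then by the triangle inequality $\|\hb_t^*\|_2 \leq \|\hb\|_2 + \bigl|\|\hb\|_2 - \|\hb_t^*\|_2\bigr|$, I get
\begin{align*}
-\mineig(\Ub_t) \leq \tfrac{M_t}{2}\|\hb\|_2 + \tfrac{M_t}{2}\bigl|\|\hb\|_2 - \|\hb_t^*\|_2\bigr|.
\end{align*}

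Finally I would combine the three inequalities and use the hypothesis $M_t \geq 2\rho$ to absorb the $\rho\|\hb\|_2$ term into $(M_t/2)\|\hb\|_2$, yielding
\begin{align*}
-\mineig(\HF(\xb_t+\hb)) \leq M_t\|\hb\|_2 + \|\HF(\xb_t)-\Ub_t\|_2 + \tfrac{M_t}{2}\bigl|\|\hb\|_2 - \|\hb_t^*\|_2\bigr|,
\end{align*}
which is stronger than the stated bound (the factor $M_t/2$ is replaced by $M_t$). There is no real obstacle here; the only subtlety worth flagging is invoking the exact second-order condition for the \emph{global} minimizer of a nonconvex cubic model rather than just stationarity, since a local minimizer of $m_t$ would only guarantee $\Ub_t + (M_t/2)\|\hb_t^*\|_2 \Ib \succeq 0$ along certain directions. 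The global optimality of $\hb_t^*$ (as defined in the lemma) is precisely what makes the eigenvalue bound hold on all of $\RR^d$.
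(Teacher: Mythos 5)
Your proof is correct and follows the same decomposition as the cited source: Hessian Lipschitzness to move from $\xb_t+\hb$ to $\xb_t$, Weyl's inequality to move from $\HF(\xb_t)$ to $\Ub_t$, the global second-order optimality condition $\Ub_t + \tfrac{M_t}{2}\|\hb_t^*\|_2\Ib \succeq 0$ for the cubic model, and the triangle inequality on $\|\hb_t^*\|_2$ to introduce the $\big|\|\hb\|_2-\|\hb_t^*\|_2\big|$ term. Note the paper itself does not reproduce a proof of this lemma but cites \citet{zhou2018stochastic}; your chain of inequalities is exactly the argument there (and your constants are in fact slightly sharper), and you correctly flag that invoking \emph{global} optimality of $\hb_t^*$ is essential, since mere stationarity would only give a rank-one-perturbed PSD condition.
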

\begin{proof}[Proof of Lemma \ref{mu_and_h}]
By Lemma \ref{lemma:rrf1}, we have
\begin{align}
    \|\nabla F(\xb_t + \hb)\|_2^{3/2} 
    & \leq \Big[M_t\|\hb\|_2^2 +\big\|\dF(\xb_t) - \vb_t\big\|_2 +  \frac{1}{M_t}\big\|\HF(\xb_t) - \Ub_t\big\|_2^2 + \|\nabla m_t(\hb)\|_2\Big]^{3/2}\notag \\
    & \leq 2\Big[M_t^{3/2}\|\hb\|_2^3 +\big\|\dF(\xb_t) - \vb_t\big\|_2^{3/2} +  M_t^{-3/2}\big\|\HF(\xb_t) - \Ub_t\big\|_2^3 + \|\nabla m_t(\hb)\|_2^{3/2}\Big],\label{eq:mu_and_h:1}
\end{align}
where the second inequality holds due to the fact that for any $a,b,c \geq 0$, we have $(a+b+c)^{3/2} \leq 2(a^{3/2} + b^{3/2} + c^{3/2})$. 
By Lemma \ref{lemma:rrf2}, we have
\begin{align}
    -\rho^{-3/2}\mineig(\HF(\xb_t + \hb))^{3}\notag 
    & \leq \rho^{-3/2}\Big[M_t\|\hb\|_2 + \big\|\HF(\xb_t) - \Ub_t\big\|_2 + M_t\big|\|\hb\|_2 - \|\hb_t^*\|_2\big|\Big]^3\notag \\
    & \leq 9\rho^{-3/2}\Big[M_t^3\|\hb\|_2^3 + \big\|\HF(\xb_t) - \Ub_t\big\|_2^3 + M_t^3\big|\|\hb\|_2 - \|\hb_t^*\|_2\big|^3\Big],\label{eq:mu_and_h:2}
\end{align}
where the second inequality holds due to the fact that for any $a,b,c \geq 0$, we have $(a+b+c)^{3} \leq 9(a^{3} + b^{3} + c^{3})$. 
Thus we have
\begin{align}
    \mu(\xb_t+\hb)& = \max\{\|\nabla F(\xb_t + \hb)\|_2^{3/2}, -\rho^{-3/2}\mineig(\HF(\xb_t + \hb))^{3}\} \notag \\
    & \leq 9 \Big[M_{t}^{3}\rho^{-3/2}\|\hb\|_2^3 + M_{t}^{3/2}\rho^{-3/2} \big\|\dF(\xb_t) - \vb_t\big\|_2^{3/2}+ \rho^{-3/2} \big\|\HF(\xb_t) - \Ub_t\big\|_2^3 \notag \\
 &\qquad+ M_t^{3/2}\rho^{-3/2}\|\nabla m_t(\hb)\|_2^{3/2} + M_t^3\rho^{-3/2}\big|\|\hb\|_2 - \|\hb_t^*\|_2 \big|^3\Big],\notag
\end{align}
where the inequality holds due to \eqref{eq:mu_and_h:1}, \eqref{eq:mu_and_h:2} and the fact that $M_t \geq 4\rho$.
\end{proof}
\subsection{Proof of Lemma \ref{lemma:youngs}}
\begin{proof}[Proof of Lemma \ref{lemma:youngs}]
We have
\begin{align}
    \la\dF(\xb_t) - \vb_t,\hb\ra \leq \big\|\dF(\xb_t) - \vb_t\big\|_2\|\hb\|_2 \leq \frac{\rho}{8} \|\hb\|_2^3 + \frac{6 \|\dF(\xb_t) - \vb_t\|_2^{3/2}}{5\sqrt{\rho}},\notag
\end{align}
where the first inequality holds due to Cauchy–Schwarz inequality, the second inequality holds due to Young's inequality. We also have
\begin{align}
     \big\la\big(\nabla^2F(\xb_t) - \Ub_t\big)\hb,\hb\big\ra &\leq \big\| \nabla^2F(\xb_t) - \Ub_t\big\|_2\|\hb\|_2^2\leq \frac{\rho}{8} \|\hb\|_2^3 + \frac{10}{\rho^2} \big\|\nabla^2F(\xb_t) - \Ub_t\big\|_2^3,\notag
\end{align}
where the first inequality holds due to Cauchy–Schwarz inequality, the second inequality holds due to Young's inequality. 
\end{proof}

\subsection{Proof of Lemma \ref{gradientVariance}}
We need the following lemma:
\begin{lemma}\label{vhoeff_prac}
Conditioned on $\cF_k$, with probability at least $1-\delta$ , we have 
\begin{align}
    &\big\|\df_{\cJ_k}(\xb_k) - \df_{\cJ_k}(\xb_{k-1}) - \dF(\xb_k) + \dF(\xb_{k-1})\big\|_2 \leq 6L\sqrt{\frac{\log(1/\delta)}{B^{(g)}_k}}\|\xb_k - \xb_{k-1}\|_2.\label{hoeff_prac_0}
\end{align}
We also have
\begin{align}
    \|\df_{\cJ_k}(\xb_k) - \dF(\xb_k)\|_2 \leq 6M\sqrt{\frac{\log(1/\delta)}{B^{(g)}_k}}.\label{hoeff_prac_1}
\end{align}
\end{lemma}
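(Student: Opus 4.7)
The plan is to obtain both inequalities as instances of a single dimension-free vector Hoeffding (Pinelis-type) inequality: for independent zero-mean random vectors $Y_1,\ldots,Y_N$ in $\RR^d$ with $\|Y_i\|_2 \leq R$ almost surely, with probability at least $1-\delta$ one has $\|N^{-1}\sum_i Y_i\|_2 \leq 3R\sqrt{\log(1/\delta)/N}$. Since, as stated in the notation, all index sets are multisets, $\cJ_k$ consists of $B^{(g)}_k$ indices drawn i.i.d.\ uniformly from $[n]$ and is independent of $\cF_k$. Conditioning on $\cF_k$ therefore fixes $\xb_{k-1}$ and $\xb_k$ while keeping the summands in each empirical average i.i.d., so all that remains is to identify the right $Y_i$ and to bound $\|Y_i\|_2$ in each case.

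For \eqref{hoeff_prac_0}, I would take $Y_i := [\df_i(\xb_k) - \df_i(\xb_{k-1})] - [\dF(\xb_k) - \dF(\xb_{k-1})]$ for each sampled index $i \in \cJ_k$. These are zero-mean given $\cF_k$ because $\dF = n^{-1}\sum_j \df_j$. By the $L$-gradient-Lipschitz assumption (Assumption \ref{assumption_2}), $\|\df_i(\xb_k) - \df_i(\xb_{k-1})\|_2 \leq L\|\xb_k - \xb_{k-1}\|_2$ for every $i$, and the same bound holds for $\dF$ (as an average of $L$-Lipschitz maps). Triangle inequality then gives $\|Y_i\|_2 \leq 2L\|\xb_k - \xb_{k-1}\|_2$, and plugging this $R$ and $N = B^{(g)}_k$ into the vector Hoeffding bound yields exactly \eqref{hoeff_prac_0}, with the claimed constant $6 = 2 \cdot 3$.

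For \eqref{hoeff_prac_1}, I would set $Y_i := \df_i(\xb_k) - \dF(\xb_k)$, which is zero-mean and, by Assumption \ref{assumption_3}, satisfies $\|Y_i\|_2 \leq \upp$. Applying the same concentration inequality with $R = \upp$ yields \eqref{hoeff_prac_1} (with an absolute constant no larger than the claimed $6$; any looseness is comfortably absorbed).

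There is no substantive obstacle here: the whole proof is a bookkeeping exercise around a standard vector concentration inequality together with the two boundedness/Lipschitz assumptions. The only point worth flagging is that one must use a dimension-free Hoeffding bound (for example, Pinelis' inequality for Hilbert-space-valued martingales, or the Kallenberg--Sztencel inequality for $2$-smooth spaces); a naive coordinate-wise Hoeffding with a union bound over $d$ coordinates would introduce a parasitic $\sqrt{\log d}$ factor that would propagate through Lemmas \ref{gradientVariance} and \ref{HessianVariance} and distort the sample-complexity calculations in Corollary \ref{corollary_1}.
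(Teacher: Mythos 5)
Your proof is correct and takes essentially the same route as the paper: you identify the same zero-mean i.i.d.\ summands $Y_i$ (conditioned on $\cF_k$), bound $\|Y_i\|_2$ by $2L\|\xb_k - \xb_{k-1}\|_2$ via the gradient-Lipschitz assumption for \eqref{hoeff_prac_0} and by $\upp$ via Assumption \ref{assumption_3} for \eqref{hoeff_prac_1}, and then apply Pinelis' dimension-free vector concentration inequality, which is precisely the paper's Lemma \ref{vazuma}. Your observation that a coordinate-wise union bound would introduce a spurious $\sqrt{\log d}$ on the gradient side is accurate and well taken, though it is worth noting the paper's Hessian estimator already incurs a $\log d$ through the matrix Azuma inequality, so your warning is only material for the gradient complexity.
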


\begin{proof}[Proof of Lemma \ref{gradientVariance}]
First, we have $\vb_t - \dF(\xb_t) = \sum_{k=\lfloor t/S^{(g)}\rfloor \cdot S^{(g)}}^t \ub_k$, where 
\begin{align}
    \ub_k &= \df_{\cJ_k}(\xb_k) - \df_{\cJ_k}(\xb_{k-1}) - \dF(\xb_k) + \dF(\xb_{k-1}),&k>\lfloor t/S^{(g)}\rfloor \cdot S^{(g)},\notag \\ \ub_k &= \df_{\cJ_k}(\xb_k) - \dF(\xb_k), &k = \lfloor t/S^{(g)}\rfloor \cdot S^{(g)}\notag
\end{align}
Meanwhile, we have $\EE[\ub_k|\cF_{k-1}] = 0$. Conditioned on $\cF_{k-1}$, for $\mod(k, S^{(g)}) \neq 0$, from Lemma \ref{vhoeff_prac}, we have that with probability at least $1-\delta$ the following inequality holds :
\begin{align}
    \|\ub_k\|_2 \leq 6L\sqrt{\frac{\log(1/\delta)}{B^{(g)}_k}}\|\xb_k - \xb_{k-1}\|_2 \leq \sqrt{\frac{\epsilon^2}{540S^{(g)}\log(1/\delta)}},\label{gradientVar_0}
\end{align}
where the second inequality holds due to \eqref{gradientVariance-1}. For 
$\mod(k, S^{(g)}) = 0$, with probability at least $1-\delta$, we have
\begin{align}
    \|\ub_k\|_2 \leq 6M\sqrt{\frac{\log(1/\delta)}{B^{(g)}_k}} \leq \frac{\epsilon}{\sqrt{540\log(1/\delta)}} ,\label{gradientVar_1}
\end{align}
where the second inequality holds due to \eqref{gradientVariance-2}.
Conditioned on $\cF_{\lfloor t/S^{(g)}\rfloor\cdot S^{(g)}}$, by union bound, with probability at least $1-\delta\cdot(t - \lfloor t/S^{(g)}\rfloor\cdot S^{(g)})$ \eqref{gradientVar_0} or \eqref{gradientVar_1} holds for all $\lfloor t/S^{(g)}\rfloor\cdot S^{(g)} \leq k \leq t$.
Then for given $k$, by vector Azuma-Hoeffding inequality in Lemma \ref{vazuma}, conditioned on$\cF_{k}$, with probability at least $1-\delta$ we have
\begin{align}
    \|\vb_k - \dF(\xb_k)\|_2^2 & = \bigg\|\sum_{k=\lfloor t/S^{(g)}\rfloor \cdot S^{(g)}}^t \ub_k\bigg\|_2^2 \notag \\
    & \leq 9 \log(d/\delta)\Big[(t - \lfloor t/S^{(g)}\rfloor\cdot S^{(g)}) \cdot \frac{\epsilon^2}{540S^{(g)}\log(d/\delta)} + \frac{\epsilon^2}{540\log(1/\delta)}\Big]\notag \\
    & \leq 9 \log(1/\delta)\cdot \frac{\epsilon^2}{270\log(1/\delta)}\notag \\
    & \leq \epsilon^2/30.\label{gradientVar_3}
\end{align}
Finally, by union bound, we have that with probability at least $1-2\delta\cdot(t - \lfloor t/S^{(g)}\rfloor\cdot S^{(g)})$, for all $\lfloor t/S^{(g)}\rfloor\cdot S^{(g)} \leq k \leq t$, we have \eqref{gradientVar_3} holds.
\end{proof}

\subsection{Proof of Lemma \ref{HessianVariance}}
We need the following lemma:
\begin{lemma}\label{hoeff_prac}
Conditioned on $\cF_k$, with probability at least $1-\delta$ , we have the following concentration inequality
\begin{align}
    &\big\|\Hf_{\cI_k}(\xb_k) - \Hf_{\cI_k}(\xb_{k-1}) - \nabla^2 F(\xb_k) + \nabla^2 F(\xb_{k-1})\big\|_2 \leq 6\rho\sqrt{\frac{\log(d/\delta)}{B^{(h)}_k}}\|\xb_k - \xb_{k-1}\|_2.\label{hoeff_prac_0}
\end{align}
We also have
\begin{align}
    \|\Hf_{\cI_k}(\xb_k) - \HF(\xb_k)\|_2 \leq 6L\sqrt{\frac{\log(d/\delta)}{B^{(h)}_k}}.\label{hoeff_prac_1}
\end{align}
\end{lemma}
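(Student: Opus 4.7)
}

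The plan is to prove both inequalities by applying a matrix Bernstein/Azuma--Hoeffding concentration inequality to the sum of i.i.d.\ bounded random Hessian matrices drawn during subsampling. Throughout, $\cI_k$ is a multiset of size $B_k^{(h)}$ drawn uniformly with replacement from $[n]$, conditionally independent of $\cF_k$.

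For the first inequality, I would introduce the centered random matrices
\begin{align*}
\Yb_i \;:=\; \Hf_i(\xb_k) - \Hf_i(\xb_{k-1}) - \bigl(\HF(\xb_k) - \HF(\xb_{k-1})\bigr),\qquad i\in\cI_k.
\end{align*}
Conditioned on $\cF_k$, each $\Yb_i$ has zero mean and, using the $\rho$-Hessian Lipschitz property in Assumption~\ref{assumption_2} together with the triangle inequality, satisfies $\|\Yb_i\|_2 \le 2\rho\|\xb_k - \xb_{k-1}\|_2$. Since
\begin{align*}
\Hf_{\cI_k}(\xb_k) - \Hf_{\cI_k}(\xb_{k-1}) - \HF(\xb_k) + \HF(\xb_{k-1})
\;=\; \frac{1}{B_k^{(h)}}\sum_{i\in\cI_k}\Yb_i,
\end{align*}
I would apply the matrix Azuma--Hoeffding inequality for independent bounded self-adjoint random matrices (or matrix Bernstein) to conclude that with probability at least $1-\delta$, the operator norm of the average is bounded by $c\,\rho\|\xb_k-\xb_{k-1}\|_2\sqrt{\log(d/\delta)/B_k^{(h)}}$ for some small absolute constant $c$; choosing the constants in the concentration bound yields the factor $6$.

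For the second inequality, I would use the analogous decomposition with
\begin{align*}
\Zb_i \;:=\; \Hf_i(\xb_k) - \HF(\xb_k),\qquad i\in\cI_k,
\end{align*}
noting that $L$-gradient Lipschitzness in Assumption~\ref{assumption_2} gives $\|\Hf_i(\xb_k)\|_2 \le L$, hence $\|\Zb_i\|_2 \le 2L$. Applying the same matrix concentration tool to $\Hf_{\cI_k}(\xb_k) - \HF(\xb_k) = (1/B_k^{(h)})\sum_{i\in\cI_k}\Zb_i$ yields the stated bound with constant $6$.

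The only real subtlety is picking the correct dimension-dependent matrix concentration result and matching constants; both the boundedness of $\|\Yb_i\|_2$ and $\|\Zb_i\|_2$ follow immediately from Assumption~\ref{assumption_2}, and there are no delicate variance computations needed since we are content with a Hoeffding-type bound rather than a sharper Bernstein-type one. So the main ``obstacle'' is merely bookkeeping of numerical constants; the analytic content reduces to two textbook applications of matrix Azuma--Hoeffding.
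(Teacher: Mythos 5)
Your proposal is correct and is essentially identical to the paper's proof: the paper likewise centers the subsampled Hessian differences, bounds each summand's spectral norm by $2\rho\|\xb_k-\xb_{k-1}\|_2$ (resp.\ $2L$) via Assumption~\ref{assumption_2}, and invokes the matrix Azuma inequality (Lemma~\ref{Azuma}, from Tropp) with constant $3$, producing the factor $6$ after combining with the $2\rho$ (resp.\ $2L$) bound. The only detail the paper also records, which you could add for completeness, is that one may assume $B_k^{(h)}<n$, since otherwise $\Hf_{\cI_k}=\HF$ and both left-hand sides vanish.
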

\begin{proof}[Proof of Lemma \ref{HessianVariance}]
First, we have $\Ub_t - \HF(\xb_t) = \sum_{k=\lfloor t/S^{(h)}\rfloor \cdot S^{(h)}}^t \Vb_k$, where 
\begin{align}
    \Vb_k &= \Hf_{\cI_k}(\xb_k) - \Hf_{\cI_k}(\xb_{k-1}) - \HF(\xb_k) + \HF(\xb_{k-1}),&k>\lfloor t/S^{(h)}\rfloor \cdot S^{(h)},\notag \\ 
    \Vb_k &= \df_{\cI_k}(\xb_k) - \dF(\xb_k), &k = \lfloor t/S^{(h)}\rfloor \cdot S^{(h)}\notag
\end{align}
Meanwhile, we have $\EE[\Vb_k|\sigma(\Vb_{k-1}, ..., \Vb_0)] = 0$. Conditioned on $\cF_{k-1}$, for $\mod(k, S^{(h)}) \neq 0$, from Lemma \ref{hoeff_prac}, we have that with probability at least $1-\delta$, the following inequality holds :
\begin{align}
    \|\Vb_k\|_2 \leq 6\rho\sqrt{\frac{\log(d/\delta)}{B^{(h)}_k}}\|\xb_k - \xb_{k-1}\|_2  \leq  \sqrt{\frac{\rho\epsilon}{360S^{(h)}\log(d/\delta)}},\label{hessianva_0}
\end{align}
where the second inequality holds due to \eqref{gradientVariance-1}. For 
$\mod(k, S^{(h)}) = 0$, with probability at least $1-\delta$, we have
\begin{align}
    \|\Vb_k\|_2 \leq 6L\sqrt{\frac{\log(d/\delta)}{B^{(h)}_k}} \leq \sqrt{\frac{\rho\epsilon}{360\log(d/\delta)}},\label{hessianva_1}
\end{align}
where the second inequality holds due to \eqref{gradientVariance-2}.
Conditioned on $\cF_{\lfloor t/S^{(h)}\rfloor\cdot S^{(h)}}$, by union bound, with probability at least $1-\delta\cdot(t - \lfloor t/S^{(h)}\rfloor\cdot S^{(h)})$ \eqref{hessianva_0} or \eqref{hessianva_1} holds for all $\lfloor t/S^{(h)}\rfloor\cdot S^{(h)} \leq k \leq t$.
Then for given $k$, by Matrix Azuma inequality Lemma \ref{Azuma}, conditioned on$\cF_{k}$, with probability at least $1-\delta$ we have
\begin{align}
    \|\Ub_k - \HF(\xb_k)\|_2^2 
    & = \bigg\|\sum_{k = \lfloor t/S^{(h)}\rfloor\cdot S^{(h)}}^t \Vb_k\bigg\|_2^2\notag \\
   & \leq 9 \log(d/\delta)\Big[(t - \lfloor t/S^{(h)}\rfloor\cdot S^{(h)}) \cdot \frac{\rho\epsilon}{360S^{(h)}\log(d/\delta)} + \frac{\rho\epsilon}{360\log(d/\delta)}\Big]\notag \\
    & \leq 9 \log(d/\delta)\cdot \frac{\rho\epsilon}{180\log(d/\delta)}\notag \\
    & \leq \rho\epsilon/20.\label{hessianva_3}
\end{align}
Finally, by union bound, we have that with probability at least $1-2\delta\cdot(t - \lfloor t/S^{(h)}\rfloor\cdot S^{(h)})$, for all $\lfloor t/S^{(h)}\rfloor\cdot S^{(h)} \leq k \leq t$, we have \eqref{hessianva_3} holds.

\end{proof}

\section{Proofs of Technical Lemmas in Appendix \ref{app:b}}
\subsection{Proof of Lemma \ref{sufficientde}}
We have the following lemma which guarantees the effectiveness of $\namesub$ in Algorithm \ref{alg:sub}.
\begin{lemma}\label{carmon_0}\citep{Carmon2016Gradient}
Let $\Ab \in \RR^{d \times d}$ and $\|\Ab\|_2 \leq \beta$, $\bbb \in \RR^d$, $\tau>0, \zeta>0, \epsilon' \in(0,1), \delta' \in (0,1)$ and $\eta < 1/(8\beta+2\tau\zeta)$. We denote that $g(\hb) = \bbb^\top\hb + \hb^\top\Ab\hb/2 + \tau/6\cdot\|\hb\|_2^3$ and $\sbb = \argmin_{\hb \in \RR^d}g(\hb)$. Then with probability at least $1-\delta'$, if
\begin{align}
    \|\sbb\|_2 \geq \zeta\text{ or }\|\bbb\|_2 \geq \max\{ \sqrt{\beta\tau/2}\zeta^{3/2}, \tau\zeta^2/2\},\label{carmon_0_0}
\end{align}
then $\xb = \namesub(\Ab, \bbb, \tau, \eta, \zeta, \epsilon', \delta')$ satisfies that $g(\xb) \leq -(1-\epsilon')\tau\zeta^3/12$.
\end{lemma}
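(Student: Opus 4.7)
The plan is to derive the decrease bound in two logical pieces: a structural property of the global minimizer $\sbb$ of the cubic model $g$, and a convergence analysis for the gradient-based subsolver $\namesub$ applied to $g$.

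First, I would analyze the minimizer $\sbb$. Its first-order optimality condition is $(\Ab + (\tau\|\sbb\|_2/2)\Ib)\sbb = -\bbb$, together with the second-order condition $\Ab + (\tau\|\sbb\|_2/2)\Ib \succeq 0$. Substituting the first relation into $g$ and using the semi-definite bound $\sbb^\top\Ab\sbb \geq -\tau\|\sbb\|_2^3/2$ yields the standard identity
\begin{equation*}
g(\sbb) \;=\; -\tfrac{1}{2}\sbb^\top\Ab\sbb - \tfrac{\tau}{3}\|\sbb\|_2^3 \;\leq\; -\tfrac{\tau}{12}\|\sbb\|_2^3.
\end{equation*}
Next I would verify that either hypothesis in \eqref{carmon_0_0} forces $\|\sbb\|_2\geq \zeta$. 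The case $\|\sbb\|_2\geq\zeta$ is immediate. Otherwise, taking norms in the optimality condition gives $\|\bbb\|_2\leq\bigl(\beta+\tau\|\sbb\|_2/2\bigr)\|\sbb\|_2$; splitting according to whether $\beta$ or $\tau\|\sbb\|_2/2$ dominates, one sees that $\|\sbb\|_2<\zeta$ contradicts $\|\bbb\|_2\geq\max\{\sqrt{\beta\tau/2}\,\zeta^{3/2},\tau\zeta^2/2\}$. Combining with the previous display produces $g(\sbb)\leq -\tau\zeta^3/12$, which is the target value up to the relative slack $\epsilon'$.

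The second piece is to show that $\namesub$ — gradient descent on $g$ with step size $\eta<1/(8\beta+2\tau\zeta)$ from a small random perturbation — outputs a point $\xb$ with $g(\xb)\leq(1-\epsilon')g(\sbb)$ with probability at least $1-\delta'$. I would first establish a local smoothness bound: on the sub-level set $\{g\leq 0\}$ the iterates remain bounded (in norm by roughly $\|\sbb\|_2$), so the Hessian of $g$ has spectral norm at most $\beta+\tau\|\hb\|_2+\tau\|\hb\|_2/2\lesssim\beta+\tau\zeta$ on the relevant region, matching the step-size restriction. A standard descent-lemma calculation then gives $g(\hb_{k+1})-g(\hb_k)\leq -\tfrac{\eta}{2}\|\nabla g(\hb_k)\|_2^2$, so gradient norms must decay, and any limit point is a stationary point of $g$.

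The real challenge is ruling out convergence to a saddle point, because $\Ab$ can have negative eigenvalues and $g$ is nonconvex. Here I would use the Carmon–Duchi strategy: initialize at a small isotropic random perturbation $\hb_0$ of the origin, and argue that iterates that stall near a strict saddle must lie in a thin subspace aligned with the positive-eigenspace of the local Hessian. A volume/projection argument on the initial distribution shows that the probability of such an initialization is at most $\delta'$; off this bad event, the negative-curvature direction amplifies geometrically under gradient steps, producing enough descent to reach function value $-(1-\epsilon')\tau\|\sbb\|_2^3/12$ within the iteration budget. Combining this with the structural bound $\|\sbb\|_2\geq\zeta$ finishes the proof. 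The main obstacle I expect is precisely this saddle-escape step: turning the ``avoid a thin slab'' probabilistic statement into a quantitative bound on the number of gradient iterations required, which in particular must match the step-size condition $\eta<1/(8\beta+2\tau\zeta)$ so that the local quadratic model remains valid throughout the escape trajectory.
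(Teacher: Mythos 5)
There is a genuine gap in your first step. The claim that the second branch of \eqref{carmon_0_0} forces $\|\sbb\|_2\geq\zeta$ is false. From the optimality condition $(\Ab+\tfrac{\tau\|\sbb\|_2}{2}\Ib)\sbb=-\bbb$ you only get $\|\bbb\|_2\leq(\beta+\tfrac{\tau}{2}\|\sbb\|_2)\|\sbb\|_2$, and your own case split shows the problem: in the regime where $\beta$ dominates you obtain $\|\sbb\|_2\geq\|\bbb\|_2/(2\beta)$, which under $\|\bbb\|_2\geq\sqrt{\beta\tau/2}\,\zeta^{3/2}$ only gives $\|\sbb\|_2\gtrsim\zeta\sqrt{\tau\zeta/\beta}$, far below $\zeta$ when $\beta\gg\tau\zeta$ (the typical regime here, where $\beta=L$, $\tau=4\rho$, $\zeta=\sqrt{\epsilon/\rho}$). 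Concretely, take $\Ab=\beta\Ib$ with $\beta\gg\tau\zeta$ and $\|\bbb\|_2=\sqrt{\beta\tau/2}\,\zeta^{3/2}$: the hypothesis holds, yet $\|\sbb\|_2\leq\|\bbb\|_2/\beta=\zeta\sqrt{\tau\zeta/(2\beta)}\ll\zeta$ (and even in your other branch you only get $\|\sbb\|_2\geq\zeta/\sqrt{2}$, which loses the constant). So the route ``large $\|\bbb\|_2\Rightarrow\|\sbb\|_2\geq\zeta\Rightarrow g(\sbb)\leq-\tau\zeta^3/12$'' breaks at the first implication. The correct argument (as in \citet{Carmon2016Gradient} and \citet{tripuraneni2018stochastic}, which is why Algorithm \ref{alg:sub} evaluates the Cauchy point before doing any gradient steps) treats the large-gradient case separately: when $\|\bbb\|_2\geq\max\{\sqrt{\beta\tau/2}\,\zeta^{3/2},\tau\zeta^2/2\}$, minimizing $g$ along the direction $-\bbb/\|\bbb\|_2$ (the Cauchy point) already yields $g(\xb_C)\leq-\tau\zeta^3/12$ deterministically, with no statement about $\|\sbb\|_2$ needed; the probabilistic gradient-descent analysis is only invoked for the branch $\|\sbb\|_2\geq\zeta$.

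Two further mismatches with the result you are proving: the randomization in $\namesub$ perturbs the linear term, $\tilde\bbb=\bbb+\sigma\qb$, and restarts from the Cauchy point, rather than perturbing the initialization of gradient descent as in your sketch, and the probability $\delta'$ enters through the event that $\tilde\bbb$ has sufficient overlap with the bottom eigenvector of $\Ab$ (the ``hard case''), not through a volume argument over random starts. More importantly, the quantitative core --- that gradient descent on the perturbed cubic model reaches a $(1-\epsilon')$-relative approximation of the \emph{global} minimum value within the stated iteration budget $T'$ under $\eta<1/(8\beta+2\tau\zeta)$ --- is exactly the content of the Carmon--Duchi convergence theorem, and your proposal leaves it as a sketch. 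Note also that the paper itself does not prove this lemma: it imports it wholesale from \citet{Carmon2016Gradient}, so any self-contained proof would have to reproduce that analysis, including the Cauchy-point case your decomposition currently omits.
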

\begin{proof}[Proof of Lemma \ref{sufficientde}]
We simply set $\Ab = \Ub_t$, $\bbb = \vb_t$, $\tau = M_t$, $\eta = (16L)^{-1}$, $\zeta = \sqrt{\epsilon/\rho}$, $\epsilon' = 0.5$ and $\delta' = \delta$. We have $\|\Ub_t\|_2 \leq L$, then we set $\beta = L$. With the choice of $M_t$ where $M_t = 4\rho$ and the assumption that $\epsilon < 4L^2\rho/M_t^2$, we can check that $\eta < 1/(8\beta + 2\tau\zeta)$. We also have that $\sbb = \hb_t^*$ and \eqref{carmon_0_0} holds. Thus, by Lemma \ref{carmon_0}, we have 
\begin{align}
    m_t(\hb_t) \leq -(1-\epsilon')\tau\zeta^3/12 \leq -M_t\rho^{-3/2}\epsilon^{3/2}/24.\notag
\end{align}
By the choice of $T'$ in $\namesub$, we have  
\begin{align}
    T' = \frac{480}{\eta\tau\zeta\epsilon'}\bigg[6\log\bigg(1+\sqrt{d}/\delta'\bigg) + 32\log\bigg(\frac{12}{\eta\tau\zeta\epsilon'}\bigg)\bigg)\bigg] = \tilde O\bigg(\frac{L}{M_t\sqrt{\epsilon/\rho}}\bigg).\notag
\end{align}
\end{proof}

\subsection{Proof of Lemma \ref{finalitera}}
We have the following lemma which provides the guarantee for the function value in $\namefinal$.
\begin{lemma}\label{carmon_2}\citep{Carmon2016Gradient}
We denote that $g(\hb) = \bbb^\top\hb + \hb^\top\Ab\hb/2 + \tau/6\cdot\|\hb\|_2^3$, $\sbb = \argmin_{\hb \in \RR^d}g(\hb)$, then $g(\sbb) \geq \|\bbb\|_2\|\sbb\|_2/2 - \tau\|\sbb\|_2^3/6$.
\end{lemma}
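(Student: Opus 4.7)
}

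The plan is to exploit the first-order optimality condition at $\sbb$ to eliminate the quadratic term $\hb^\top\Ab\hb/2$ from $g(\sbb)$, yielding a simple closed-form identity that depends only on $\la \bbb,\sbb\ra$ and $\|\sbb\|_2^3$, and then to apply Cauchy--Schwarz. Since the cubic regularizer makes $g$ coercive, the global minimizer $\sbb$ exists and is a stationary point, so $\nabla g(\sbb) = \bbb + \Ab\sbb + \frac{\tau}{2}\|\sbb\|_2\,\sbb = 0$.

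The first step is to take the inner product of the stationarity equation with $\sbb$, which gives
\[
\la \bbb,\sbb\ra + \la \Ab\sbb,\sbb\ra + \tfrac{\tau}{2}\|\sbb\|_2^3 = 0,
\]
so that $\la \Ab\sbb,\sbb\ra = -\la \bbb,\sbb\ra - \tfrac{\tau}{2}\|\sbb\|_2^3$. Substituting this into the definition $g(\sbb) = \la \bbb,\sbb\ra + \tfrac{1}{2}\la \Ab\sbb,\sbb\ra + \tfrac{\tau}{6}\|\sbb\|_2^3$ yields the clean identity
\[
g(\sbb) \;=\; \tfrac{1}{2}\la \bbb,\sbb\ra \;-\; \tfrac{\tau}{12}\|\sbb\|_2^3.
\]
This identity is the main structural fact; the rest is just bounding $\la \bbb,\sbb\ra$.

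The second step is to apply Cauchy--Schwarz to the inner product to lower bound $g(\sbb)$. Since $|\la \bbb,\sbb\ra| \leq \|\bbb\|_2 \|\sbb\|_2$, we obtain
\[
g(\sbb) \;\geq\; -\tfrac{1}{2}\|\bbb\|_2\|\sbb\|_2 \;-\; \tfrac{\tau}{12}\|\sbb\|_2^3,
\]
and a slight weakening of the cubic coefficient (using $\tfrac{\tau}{12}\|\sbb\|_2^3 \le \tfrac{\tau}{6}\|\sbb\|_2^3$) together with the desired sign convention of the statement gives the claimed bound. The main obstacle I anticipate is simply reconciling signs and constants: the raw identity produces $-\tfrac{1}{2}\|\bbb\|_2\|\sbb\|_2 - \tfrac{\tau}{12}\|\sbb\|_2^3$, whereas the statement is written with $+\tfrac{1}{2}\|\bbb\|_2\|\sbb\|_2$ and $-\tfrac{\tau}{6}\|\sbb\|_2^3$, so one must check conventions carefully (in particular, that $\sbb$ is the \emph{minimizer} rather than the negative step, and that the cubic penalty in $g$ is written with coefficient $\tau/6$ rather than $\tau/3$). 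Once those are fixed, no further work is needed beyond the two steps above, so the argument is essentially a one-line calculation plus Cauchy--Schwarz.
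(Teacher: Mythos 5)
Your two steps are the standard argument (and the paper itself gives no proof, only the citation to \citet{Carmon2016Gradient}): stationarity $\bbb+\Ab\sbb+\tfrac{\tau}{2}\|\sbb\|_2\sbb=0$, inner product with $\sbb$ to get the identity $g(\sbb)=\tfrac12\la\bbb,\sbb\ra-\tfrac{\tau}{12}\|\sbb\|_2^3$, then Cauchy--Schwarz. That is correct, and it yields $g(\sbb)\ge -\tfrac12\|\bbb\|_2\|\sbb\|_2-\tfrac{\tau}{12}\|\sbb\|_2^3\ge -\tfrac12\|\bbb\|_2\|\sbb\|_2-\tfrac{\tau}{6}\|\sbb\|_2^3$ (your bound is in fact sharper in the cubic term).

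The weak point is your last sentence: no ``sign convention'' reconciles your bound with the inequality as printed, because the printed inequality (with $+\|\bbb\|_2\|\sbb\|_2/2$) is simply false. Since $\sbb$ is the global minimizer, $g(\sbb)\le g(0)=0$, while the printed right-hand side can be strictly positive: take $d=1$, $\Ab=0$, $\bbb=-1$, $\tau=6$, so $\sbb=1/\sqrt{3}$, $g(\sbb)=-2/(3\sqrt{3})<0$, but $\|\bbb\|_2\|\sbb\|_2/2-\tau\|\sbb\|_2^3/6=1/(6\sqrt{3})>0$. Your own identity explains why the plus sign cannot be right: global optimality gives $\Ab+\tfrac{\tau}{2}\|\sbb\|_2\Ib\succeq 0$ and $\bbb=-(\Ab+\tfrac{\tau}{2}\|\sbb\|_2\Ib)\sbb$, hence $\la\bbb,\sbb\ra\le 0$, so the inner-product term can only push $g(\sbb)$ down, never up to $+\tfrac12\|\bbb\|_2\|\sbb\|_2$. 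So you should state plainly that Lemma~\ref{carmon_2} has a sign typo and that the correct statement is $g(\sbb)\ge -\|\bbb\|_2\|\sbb\|_2/2-\tau\|\sbb\|_2^3/6$, which is exactly what your argument proves. This corrected form also suffices where the lemma is used: in the proof of Lemma~\ref{finalitera} the bound becomes $-m_{T^*-1}(\hb^*_{T^*-1})\le M_t\|\hb^*_{T^*-1}\|_2^3/6+\|\vb_{T^*-1}\|_2\|\hb^*_{T^*-1}\|_2/2$, and since both $\|\vb_{T^*-1}\|_2$ and $\|\hb^*_{T^*-1}\|_2$ are small at termination, the extra term only changes $T''$ by lower-order factors and leaves the complexity in Corollary~\ref{free_total_count} unaffected.
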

\begin{proof}[Proof of Lemma \ref{finalitera}]
In $\namefinal$ we are focusing on minimizing $m_{T^*-1}(\hb)$. We have that $\|\vb_t\|_2 < \max\{M_t\epsilon/(2\rho), \sqrt{LM_t/2}(\epsilon/\rho)^{3/4}\} $ and $\|\hb_{T^*-1}^*\|_2 \leq \sqrt{\epsilon/\rho}$ by Lemma \ref{sufficientde}.
We can check that $\eta = (16L)^{-1}$ satisfies that $\eta<(4(L+\tau R))^{-1}$, where $R$ is defined in Lemma \ref{carmon_1}, when $\epsilon < 4L^2\rho/M_t^2$. From Lemma \ref{carmon_1} we also know that $m_{T^*-1}$ is $(L+2M_{T^*-1}R)$-smooth, which satisfies that $1/\eta>2(L+2M_{T^*-1}R)$. Thus, by standard gradient descent analysis, to get a point $\Delta$ where $\|\nabla m_{T^*-1}(\Delta)\|_2 \leq \epsilon$, $\namefinal$ needs to run
\begin{align}
    T'' = O\bigg(\frac{m_{T^*-1}(\Delta_0) - m_{T^*-1}(\hb^*_{T^*-1})}{\eta\epsilon^2}\bigg) = O\bigg(L\frac{m_{T^*-1}(\Delta_0) - m_{T^*-1}(\hb^*_{T^*-1})}{\epsilon^2}\bigg)\label{count_10}
\end{align}
iterations, where we denote by $\Delta_0$ the starting point of $\namefinal$. By directly computing, we have $m_{T^*-1}(\Delta_0) \leq 0$. By Lemma \ref{carmon_2}, we have 
\begin{align*}
    -m_{T^*-1}(\hb^*_{T^*-1}) &\leq M_t\|\hb^*_{T^*-1}\|_2^3/6 - \|\vb_{T^*-1}\|_2\|\hb_{T^*-1}^*\|_2/2 \\
    &\leq M_t\|\hb^*_{T^*-1}\|_2^3/6\\
    &= O\big( \rho(\epsilon/\rho)^{3/2}\big) \\
    &= O(\epsilon^{3/2}/\sqrt{\rho}).\notag 
\end{align*}
Thus, \eqref{count_10} can be further bounded as $T'' = O(L/\sqrt{\rho\epsilon})$. 
\end{proof}

\section{Proofs of Additional Lemmas in Appendix \ref{app_c}}
\subsection{Proof of Lemma \ref{vhoeff_prac}}

\begin{proof}[Proof of Lemma \ref{vhoeff_prac}]
We only need to consider the case where $B^{(g)}_k = |\cJ_k|<n$. For each $i \in \cJ_k$, let
\begin{align}
    \ab_i = \df_i(\xb_k) - \df_i(\xb_{k-1}) - \dF(\xb_k) + \dF(\xb_{k-1}),\notag
\end{align}
then we have $\EE_i\ab_i = 0$, $\ab_i$  i.i.d., and 
\begin{align}
    \|\ab_i\|_2 \leq \|\df_i(\xb_k) - \df_i(\xb_{k-1})\|_2 +\| \dF(\xb_k) - \dF(\xb_{k-1})\|_2 \leq 2L\|\xb_k - \xb_{k-1}\|_2,\notag
\end{align}
where the second inequality holds due to the $L$-smoothness of $f_i$ and $F$. Thus by vector Azuma-Hoeffding inequality in Lemma \ref{vazuma}, we have that with probability at least $1-\delta$, 
\begin{align}
    &\big\|\df_{\cJ_k}(\xb_k) - \df_{\cJ_k}(\xb_{k-1}) - \dF(\xb_k) + \dF(\xb_{k-1})\big\|_2 \notag \\
    & = \frac{1}{B^{(g)}_k}\bigg\|\sum_{i \in \cJ_k} \Big[\df_i(\xb_k) - \df_i(\xb_{k-1}) - \dF(\xb_k) + \dF(\xb_{k-1})\Big]\bigg\|_2\notag \\
    &\leq 6L\sqrt{\frac{\log(d/\delta)}{B^{(g)}_k}}\|\xb_k - \xb_{k-1}\|_2 .\notag
\end{align}
For each $i \in \cJ_k$, let 
\begin{align}
    \bbb_i = \df_i(\xb_k) - \dF(\xb_k),\notag
\end{align}
then we have $\EE_i\bbb_i = 0$ and $\|\bbb_i\|_2 \leq \upp$. Thus by vector Azuma-Hoeffding inequality in Lemma \ref{vazuma}, we have that with probability at least $1-\delta$, 
\begin{align}
    \|\df_{\cJ_k}(\xb_k) - \dF(\xb_k)\|_2 = \frac{1}{B^{(g)}_k}\bigg\| \sum_{i \in \cJ_k}\Big[\df_i(\xb_k) - \dF(\xb_k)\Big]\bigg\|_2 \leq 6\upp\sqrt{\frac{\log(d/\delta)}{B^{(g)}_k}}.\notag
\end{align}
\end{proof}
\subsection{Proof of Lemma \ref{hoeff_prac}}
\begin{proof}[Proof of Lemma \ref{hoeff_prac}]
We only need to consider the case where $B^{(h)}_k = |\cI_k|<n$. For each $i \in \cI_k$, let
\begin{align}
    \Ab_i = \Hf_{i}(\xb_k) - \Hf_{i}(\xb_{k-1}) - \nabla^2 F(\xb_k) + \nabla^2 F(\xb_{k-1}),\notag
\end{align}
then we have $\EE_i \Ab_i = 0, \Ab_i^\top = \Ab_i$, $\Ab_i$ i.i.d. and 
\begin{align}
    \|\Ab_i\|_2 \leq \big\|\Hf_{i}(\xb_k) - \Hf_{i}(\xb_{k-1})\big\|_2 +\big\| \nabla^2 F(\xb_k) - \nabla^2 F(\xb_{k-1})\big\|_2 \leq 2\rho \|\xb_k - \xb_{k-1}\|_2,\notag
\end{align}
where the second inequality holds due to $\rho$-Hessian Lipschitz continuous of $f_i$ and $F$. Then by Matrix Azuma inequality Lemma \ref{Azuma}, we have that with probability at least $1-\delta$,
\begin{align}
    &\big\|\Hf_{\cI_k}(\xb_k) - \Hf_{\cI_k}(\xb_{k-1}) - \nabla^2 F(\xb_k) + \nabla^2 F(\xb_{k-1})\big\|_2 \notag \\
    &= 
    \frac{1}{B^{(h)}_k}\bigg\|\sum_{i \in \cI_k}\Big[\Hf_{i}(\xb_k) - \Hf_{i}(\xb_{k-1}) - \nabla^2 F(\xb_k) + \nabla^2 F(\xb_{k-1})\Big]\bigg\|_2\notag \\
    &\leq 6\rho\sqrt{\frac{\log(d/\delta)}{B^{(h)}_k}}\|\xb_k - \xb_{k-1}\|_2.\notag
\end{align}
For each $i \in \cI_k$, let 
\begin{align}
    \Bb_i = \Hf_{i}(\xb_k) - \HF(\xb_k),\notag
\end{align}
then we have $\EE_i \Bb_i=  0$, $\Bb_i^\top  = \Bb_i$, and $\|\Bb_i\|_2 \leq 2L$. Then by Matrix Azuma inequality in Lemma \ref{Azuma}, we have that with probability at least $1-\delta$,
\begin{align}
    \|\Hf_{\cJ_k}(\xb_k) - \HF(\xb_k)\|_2  = \frac{1}{B^{(h)}_k}\bigg\|\sum_{i \in \cI_k}\Big[\Hf_{i}(\xb_k) - \HF(\xb_k)\Big]\bigg\|_2\leq 6L\sqrt{\frac{\log(d/\delta)}{B^{(h)}_k}},\notag
\end{align}
which completes the proof.
\end{proof}

\section{Auxiliary Lemmas}

We have the following vector Azuma-Hoeffding inequality:
\begin{lemma}\label{vazuma}\citep{pinelis1994optimum}
Consider $\{\vb_k\}$ be a vector-valued martingale difference, where $\EE[\vb_k|\sigma(\vb_1,...,\vb_{k-1})] = 0$ and $\|\vb_k\|_2 \leq A_k$, then we have that with probability at least $1-\delta$,
\begin{align}
    \bigg\|\sum_k \vb_k\bigg\|_2 \leq 3\sqrt{\log(1/\delta)\sum_k A_k^2}.\notag
\end{align}
\end{lemma}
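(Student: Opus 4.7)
The plan is to establish this vector Azuma-Hoeffding inequality by adapting the scalar MGF-based Azuma argument to the Hilbert space setting. Let $S_n = \sum_{k=1}^n \vb_k$, and the goal is to bound $\PP(\|S_n\|_2 \geq t)$ by a subgaussian tail with variance proxy $\sum_k A_k^2$, after which setting the tail equal to $\delta$ and absorbing numerical constants into the factor $3$ gives the stated bound.

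First, I would show by induction on $n$ that $\EE[\cosh(\lambda \|S_n\|_2)] \leq \prod_{k=1}^n \cosh(\lambda A_k) \leq \exp\bigl(\lambda^2 \sum_k A_k^2/2\bigr)$. The base case $n=0$ is immediate. For the induction step, condition on $\sigma(\vb_1,\dots,\vb_{n-1})$ and expand the squared norm as $\|S_n\|_2^2 = \|S_{n-1}\|_2^2 + 2\la S_{n-1},\vb_n\ra + \|\vb_n\|_2^2$, which exploits the Hilbert space structure. The key lemma one needs is that for any fixed $u$ and any random $v$ with $\EE v = 0$ and $\|v\|_2 \leq A$, one has $\EE[\cosh(\lambda\|u+v\|_2)] \leq \cosh(\lambda\|u\|_2)\cosh(\lambda A)$. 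Projecting $v$ along $u/\|u\|_2$ and combining with the scalar Hoeffding bound on the projection recovers this inequality, up to handling the orthogonal component via convexity of $\cosh(\lambda\sqrt{\cdot})$ in a squared variable.

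The main obstacle will be this auxiliary cosh-bound, because the norm is nonlinear and the scalar Azuma trick (where $\EE[e^{\lambda v}] \leq \cosh(\lambda A)$ follows from linear interpolation between $\pm A$) does not immediately extend to vectors. Pinelis' resolution, which I would follow, is to use the $(2,1)$-smoothness of Hilbert space: the function $f(x) = \cosh(\lambda\|x\|_2)$ satisfies $f(x+v) \leq f(x)\cosh(\lambda\|v\|_2) + g(x)\cdot\la \nabla\|x\|_2,v\ra$ for some $g$, where the gradient term vanishes in expectation by the martingale difference property. This closes the induction with the clean constant above.

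Finally, applying Markov's inequality to $\cosh(\lambda \|S_n\|_2)$ gives $\PP(\|S_n\|_2 \geq t) \leq 2\exp(-\lambda t)\exp(\lambda^2 \sum_k A_k^2/2)$; optimizing $\lambda = t/\sum_k A_k^2$ yields $\PP(\|S_n\|_2 \geq t) \leq 2\exp(-t^2/(2\sum_k A_k^2))$. Setting the right-hand side to $\delta$ gives $t = \sqrt{2\log(2/\delta)\sum_k A_k^2}$, and since $\sqrt{2\log(2/\delta)} \leq 3\sqrt{\log(1/\delta)}$ in the regime of interest (small $\delta$), we recover the stated inequality. The whole argument requires no external tools beyond the scalar Hoeffding bound and standard Hilbert space identities.
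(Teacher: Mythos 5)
The paper does not prove this lemma; it is stated with only a citation to Pinelis (1994), so there is no in-paper argument to compare against. Your sketch is a faithful reconstruction of Pinelis' actual proof strategy: build a cosh-based moment generating function for the Hilbert-space-valued partial sums, show $\EE[\cosh(\lambda\|S_n\|_2)]\leq\prod_k\cosh(\lambda A_k)\leq\exp(\lambda^2\sum_k A_k^2/2)$ by conditioning, then apply Markov and optimize $\lambda$ to obtain the subgaussian tail $\PP(\|S_n\|_2\geq t)\leq 2\exp\bigl(-t^2/(2\sum_k A_k^2)\bigr)$, which is exactly Pinelis' Theorem 3 for 2-smooth spaces with smoothness constant $D=1$. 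Your numerical conversion is also sound: solving $2\exp(-t^2/(2\sum A_k^2))=\delta$ gives $t=\sqrt{2\log(2/\delta)\sum_k A_k^2}$, and $\sqrt{2\log(2/\delta)}\leq 3\sqrt{\log(1/\delta)}$ whenever $\log(1/\delta)\geq\tfrac{2}{7}\log 2$, i.e.\ $\delta\lesssim 0.82$, which comfortably covers the regime in which the paper invokes the lemma.

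The one place you are appropriately candid but a bit imprecise is the ``key lemma'' $\EE[\cosh(\lambda\|u+v\|_2)]\leq\cosh(\lambda\|u\|_2)\cosh(\lambda A)$. This inequality is true in Hilbert space but it is not an immediate consequence of projecting $v$ onto $u$ and invoking scalar Hoeffding plus ``convexity of $\cosh(\lambda\sqrt{\cdot})$'' for the orthogonal part---that heuristic points in the wrong direction (convexity of $\cosh(\lambda\sqrt{x})$ gives a lower bound via Jensen, not an upper bound). The correct route is either Pinelis' 2-smoothness Taylor expansion (as you also mention) or the elementary coefficientwise inequality $\cosh(\lambda\sqrt{a^2+b^2})\leq\cosh(\lambda a)\cosh(\lambda b)$, which follows from $\binom{2n}{2k}\geq\binom{n}{k}$ applied term-by-term to the power series. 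With that one step made rigorous, your argument is complete and self-contained.
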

We have the following Matrix Azuma inequality :
\begin{lemma}\label{Azuma}\citep{tropp2012user}
Consider a finite adapted sequence $\{\Xb_k\}$ of self-adjoint matrices in dimension $d$, and a fixed sequence $\{\Ab_k\}$ of self-adjoint matrices that satisfy
\begin{align}
    \EE[\Xb_k|\sigma(\Xb_{k-1},...,\Xb_1)] = {\textbf 0}\text{ and }\Xb_k^2 \preceq \Ab_k^2\text{ almost surely.}\notag
\end{align}
Then we have that with probability at least $1-\delta$,
\begin{align}
    \bigg\| \sum_k \Xb_k \bigg\|_2 \leq 3\sqrt{\log(d/\delta) \sum_k\|\Ab_k\|_2^2}.\notag
\end{align}
\end{lemma}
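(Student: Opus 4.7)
\textbf{Proof proposal for Lemma \ref{Azuma}.} The plan is to derive the stated high-probability bound from the standard tail form of the matrix Azuma inequality due to Tropp, which is exactly the form the lemma cites. That tail bound says that for a self-adjoint matrix martingale difference sequence $\{\Xb_k\}$ with almost-sure quadratic upper bounds $\Xb_k^2 \preceq \Ab_k^2$, and for any $t > 0$,
\begin{align*}
\PP\Big\{\lambda_{\max}\Big(\sum_k \Xb_k\Big) \geq t\Big\} \leq d\cdot \exp\!\Big(\tfrac{-t^2}{8\sigma^2}\Big),\qquad \sigma^2 := \Big\|\sum_k \Ab_k^2\Big\|_2.
\end{align*}
First I would invoke this bound directly from Tropp's paper (cited as the source of the lemma), so the only work is to convert one-sided tails of the top eigenvalue into a two-sided spectral-norm bound and to replace $\sigma^2$ by the simpler quantity $\sum_k\|\Ab_k\|_2^2$ that appears in the statement.

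The spectral-norm step is routine: since the hypothesis $\Xb_k^2 \preceq \Ab_k^2$ is symmetric in the sign of $\Xb_k$, the same tail bound applies to $-\sum_k \Xb_k$. Because $\|\sum_k \Xb_k\|_2 = \max\{\lambda_{\max}(\sum_k \Xb_k),\,\lambda_{\max}(-\sum_k \Xb_k)\}$, a union bound gives
\begin{align*}
\PP\Big\{\Big\|\sum_k \Xb_k\Big\|_2 \geq t\Big\} \leq 2d\cdot \exp\!\Big(\tfrac{-t^2}{8\sigma^2}\Big).
\end{align*}
For the variance proxy, I would use the triangle inequality for the operator norm on positive semidefinite matrices, $\|\sum_k \Ab_k^2\|_2 \leq \sum_k \|\Ab_k^2\|_2 = \sum_k \|\Ab_k\|_2^2$, so the tail above holds \emph{a fortiori} with $\sigma^2$ replaced by $V := \sum_k \|\Ab_k\|_2^2$.

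Finally, I would invert the bound by setting $2d\exp(-t^2/(8V)) = \delta$, i.e.\ $t = \sqrt{8V\log(2d/\delta)}$, and verify that this $t$ is at most $3\sqrt{V \log(d/\delta)}$. Indeed $\log(2d/\delta) = \log 2 + \log(d/\delta)$, and for the range of $\delta$ relevant to the applications in this paper we have $\log(d/\delta) \geq \log 2$, so $\log(2d/\delta) \leq 2\log(d/\delta)$ and $\sqrt{8 \cdot 2} = 4$; a slightly sharper bookkeeping (absorbing $\log 2$ into a smaller multiple of $\log(d/\delta)$, or tightening the one-sided union bound) is what pins the constant down to $3$. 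The only mildly delicate point is this constant chase, but since the statement is just the standard Tropp bound repackaged in high-probability form, there is no essential analytic obstacle.
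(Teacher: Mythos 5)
This lemma has no proof in the paper; it is imported verbatim (up to phrasing) from the cited source, with the commented-out source code showing the authors have in mind the spectral-norm tail bound
\begin{align*}
\PP\Big\{\Big\|\textstyle\sum_k \Xb_k\Big\|_2\geq t\Big\}\;\leq\; d\cdot \exp\!\Big(\tfrac{-t^2}{8\sum_k\|\Ab_k\|_2^2}\Big),
\end{align*}
so your overall plan of inverting an exponential tail is the right one and matches what the authors are implicitly doing. The one genuine problem is exactly the constant chase that you flag at the end. Starting from Tropp's one-sided bound on $\lambda_{\max}$ and taking the two-sided union bound introduces a factor $2d$, which after inversion gives $t=\sqrt{8V\log(2d/\delta)}$ with $V=\sum_k\|\Ab_k\|_2^2$. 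To compress this to $3\sqrt{V\log(d/\delta)}$ you need $8\log(2d/\delta)\leq 9\log(d/\delta)$, i.e.\ $d/\delta\geq 2^8=256$; your stated hypothesis $\log(d/\delta)\geq\log 2$ is nowhere near enough (it only yields the constant $4$, which you correctly compute). The cleaner route, and the one consistent with the form the authors wrote, is to use the spectral-norm tail with the single factor $d$ (not $2d$), after which inversion gives $t=\sqrt{8V\log(d/\delta)}$ and the bound $\sqrt 8\approx 2.83<3$ holds unconditionally whenever $d/\delta\geq 1$; no extraneous assumption on $\delta$ is needed. So the approach is sound, the bound $\|\sum_k\Ab_k^2\|_2\leq\sum_k\|\Ab_k\|_2^2$ is correct, and the only fix required is to carry the $d$-factor version of the tail (or, if you insist on the $2d$ version, to state the extra hypothesis $d/\delta\geq 256$ explicitly rather than gesturing at ``relevant $\delta$'').
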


\section{Additional Algorithms and Functions}\label{appendix: add_apgorithm}

Due to space limit, we include the approximate solvers \citep{Carmon2016Gradient} for the cubic subproblem in this section for the purpose of self-containedness. 

\begin{algorithm*}[h]
\caption{$\namesub(\Ab[\cdot], \bbb, \tau, \eta, \zeta, \epsilon', \delta')$}\label{alg:sub}
\begin{algorithmic}[1]
\STATE $\xb = \text{CauchyPoint}(\Ab[\cdot], \bbb,\tau)$
\IF{CubicFunction$(\Ab[\cdot], \bbb,\tau,\xb) \leq -(1-\epsilon')\tau\zeta^3/12$}
\STATE \textbf{return} $\xb$
\ENDIF
  \STATE Set
  \begin{align}
      T' = \frac{480}{\eta\tau\zeta\epsilon'}\bigg[6\log\bigg(1+\sqrt{d}/\delta'\bigg) + 32\log\bigg(\frac{12}{\eta\tau\zeta\epsilon'}\bigg)\bigg)\bigg]\notag
  \end{align}
  \STATE Draw $\qb$ uniformly from the unit sphere, set $\tilde\bbb = \bbb + \sigma\qb$ where $\sigma = \tau^2\zeta^3\epsilon'/(\beta+\tau\zeta)/576$
  \STATE $\xb = \text{CauchyPoint}(\Ab[\cdot], \bbb, \tau)$
  \FOR{$t=1,\ldots,T-1$}
  \STATE $\xb \leftarrow \xb - \eta\cdot \text{CubicGradient}(\Ab[\cdot],\tilde\bbb, \tau,\xb)$

  \IF {CubicFunction$(\Ab[\cdot], \tilde\bbb, \tau,\xb) \leq -(1-\epsilon')\tau\zeta^3/12$}
  \STATE \textbf{return}  $\xb$
  \ENDIF

  \ENDFOR
  \STATE \textbf{return} $\xb$

\end{algorithmic}
\end{algorithm*}

\begin{algorithm*}[h]
\caption{$\namefinal(\Ab[\cdot], \bbb, \tau, \eta, \epsilon_g)$}\label{alg:finalsub}
\begin{algorithmic}[1]
  \STATE $\Delta \leftarrow $CauchyPoint$(\Ab[\cdot], \bbb, \tau)$
  \WHILE{$\|\text{Gradient}(\Ab[\cdot], \bbb, \tau, \Delta)\|_2> \epsilon_g$}
  \STATE $\Delta \leftarrow \Delta - \eta\cdot \text{Gradient}(\Ab[\cdot], \bbb, \tau, \Delta)$
  \ENDWHILE
  \STATE \textbf{return} $\Delta$
\end{algorithmic}
\end{algorithm*}

\begin{algorithm*}[t!]
\begin{algorithmic}[1]
  \STATE \textbf{Function:} CauchyPoint$(\Ab[\cdot], \bbb,\tau )$
  \STATE \textbf{return} $-R_c\bbb/\|\bbb\|_2$, where
  \begin{align}
      R_c = \frac{-\bbb^\top\Ab[\bbb]}{\tau\|\bbb\|_2^2} + \sqrt{\bigg(\frac{-\bbb^\top\Ab[\bbb]}{\tau\|\bbb\|_2^2}\bigg)^2 + \frac{2\|\bbb\|_2}{\tau}}\notag
  \end{align}
  
  \hrulefill	

  \STATE \textbf{Function:} CubicFunction$(\Ab[\cdot], \bbb,\tau ,\xb)$
  \STATE \textbf{return} $\bbb^\top\xb + \xb^\top\Ab[\xb]/2 + \tau\|\xb\|_2^3/6$
  
  \hrulefill	

  \STATE \textbf{Function:} CubicGradient$(\Ab[\cdot], \bbb,\tau ,\xb)$
  \STATE \textbf{return} $\bbb^\top + \Ab[\xb] + \tau\|\xb\|_2\xb/2$
\end{algorithmic}
\end{algorithm*}

\newpage
\bibliographystyle{ims}
\bibliography{reference}

\end{document}